\newtheorem{theorem}{Theorem}[section]
\newtheorem{notation}[theorem]{Notation}
\newtheorem{corollary}[theorem]{Corollary}
\newtheorem{lemma}[theorem]{Lemma}
\newtheorem{proposition}[theorem]{Proposition}
\newtheorem{conjecture}[theorem]{Conjecture}
\theoremstyle{definition}
\newtheorem{definition}[theorem]{Definition}
\newtheorem{remark}[theorem]{Remark}
\newtheorem{example}[theorem]{Example} 
\newtheorem{question}[theorem]{Question}
\newcommand{\supp}{\operatorname{supp}}
\newcommand{\F}{\mathbb{F}}
\newcommand{\N}{\mathbb{N}}
\newcommand{\C}{\mathcal{C}}
\author[1]{Ragnar Freij-Hollanti}
\author[2]{Relinde Jurrius}
\author[1]{Olga Kuznetsova}
\affil[1]{Department of Mathematics and Systems Analysis, Aalto University, Finland}
\affil[2]{Netherlands Defence Academy, the Netherlands}
\title{Combinatorial derived matroid}
\date{\vspace{-5ex}}
\begin{document}
\maketitle

\begin{abstract}
Let $M$ be an arbitrary matroid with circuits $\mathcal{C}(M)$. We propose a definition of a derived matroid $\delta M$ that has as its ground set $\mathcal{C}(M)$. Unlike previous attempts of such a definition, our definition applies to arbitrary matroids, and is completely combinatorial. We prove that the rank of $\delta M$ is bounded from above by $|M|-r(M)$, that it is connected if and only if $M$ is connected. We compute examples including the derived matroids of uniform matroids, the V\'amos matroid and the graphical matroid $M(K_4)$. We formulate conjectures relating our construction to previous definitions of derived matroids.
\end{abstract}

\section{Introduction}
\label{sec: intro}

Let $M$ be an arbitrary matroid. Gian-Carlo Rota~\cite{Rota} and Henry Crapo~\cite{Crapo,CrapoBlog} asked, partly independently and with various precise formulations, for a natural definition of a matroid $\delta M$ that has as its ground set the collection of (co)circuits of $M$. Programmatically, this would mean --- among other things --- a combinatorialization of notions such as syzygies and resolutions in algebraic geometry. For represented matroids, such a construction is known~\cite{oxley2019dependencies, jurrius2015coset}, but the combinatorics of $\delta M$ is in general not uniquely defined by that of $M$, unless $M$ is projectively unique~\cite{Oxley1996}, for example binary. The special case of binary matroids $M$ was carefully studied by Longyear~\cite{longyear1980circuit}. Another related notion is the lattice-theoretic notion of an adjoint geometric lattice to the lattice of flats of $M$~\cite{cheung}, but this does also not exist for all matroids. Moreover, when an adjoint geometric lattice exists, it is not necessarily unique~\cite{Alfter}. We propose a purely combinatorial construction of $\delta M$, defined via the rank function on $M$, and via an operation that resembles a closure operation on the collection of dependent sets. 

To our knowledge, the first actual definition in the literature of a derived matroid is by Longyear~\cite{longyear1980circuit}, in the case of binary matroids. However, these definitions are predated by predictions of their basic properties, made by Crapo~\cite{Crapo}, and more specific questions asked by Rota~\cite{Rota}. These predictions and questions, as well as some later work in the field, were formulated in two dual versions, with the ground set of the derived matroid being either the circuits or the cocircuits of the underlying matroid. For consistency, in this paper we reformulate all such results, whenever feasible, to the circuit-based definitions. 

We start by introducing the background on matroids and earlier work on derived matroids in Sections~\ref{sec:prelims} and~\ref{sec: earlier definitions}, respectively. In Section~\ref{sec:combinatorial derived matroid}, we introduce our construction and give several cryptomorphic descriptions of it. We then give several examples of combinatorial derived matroids in Section~\ref{sec: examples}. In Section~\ref{sec: connectedness} we show that the combinatorial derived matroid is connected if and only if the matroid is connected. In Section~\ref{sec: rank} we investigate the independent sets and rank of our construction, and in particular show that the rank is bounded from above by $|M|-r(M)$.  Section~\ref{sec: comparison of def} is devoted to comparing our construction to the earlier definitions discussed in Section~\ref{sec: earlier definitions}, and in particular deriving some conditions under which they are equal. 
Finally, in Section~\ref{sec: further research} we point out some directions for further research. 

\section{Preliminaries on matroids}
\label{sec:prelims}
In this paper, we use several cryptomorphic definitions of matroids. Our construction of the derived matroid will start with a matroid defined in terms of circuits, and construct the derived matroid in terms of the dependent sets of the latter.
We refer the reader to \cite{oxley2011matroid} for a  reference on matroid theory.
\begin{definition}[Matroid]\label{definition: matroid}
A matroid $M$ is a pair $(E,\mathcal{D})$ where $E$ is a finite set and $\mathcal{D}$ a family of subsets of $E$ satisfying
\begin{itemize}
\item[(D1)] $\emptyset\notin\mathcal{D}$;
\item[(D2)] if $D\in\mathcal{D}$ and $D\subseteq D'$ then $D'\in\mathcal{D}$;
\item[(D3)] if $D_1,D_2\in\mathcal{D}$ and $D_1\cap D_2\notin\mathcal{D}$, then $(D_1\cup D_2)\backslash\{e\}\in\mathcal{D}$ for all $e\in D_1\cap D_2$.
\end{itemize}
We call the members of $\mathcal{D}$ the \emph{dependent sets} of $M$. The subsets of $E$ that are not in $\mathcal{D}$ are thus {\em independent}. An inclusion-minimal dependent set is called a \emph{circuit}, and the family $\mathcal{C}$ of circuits of a matroid $M$ is characterized by the axioms
\begin{itemize}
\item[(C1)] $\emptyset\notin\mathcal{C}$;
\item[(C2)] if $C_1,C_2\in\mathcal{C}$ and $C_1\subseteq C_2$ then $C_1=C_2$;
\item[(C3)] if $C_1,C_2\in\mathcal{C}$ are distinct and $e \in C_1\cap C_2$, then there exists $C_3 \in \mathcal{C}$ such that $C_3\subseteq(C_1\cup C_2)\backslash\{e\}$.
\end{itemize}
\end{definition}

\begin{notation} 
Unless explicitly mentioned otherwise, $E$ denotes a finite set, and its power set is denoted by $2^E$. The set of natural numbers $\N$ includes zero. If $A$ and $B$ are finite sets and $\F$ is a field, then $\F^{A\times B}$ is the set of matrices with rows indexed by $A$ and columns indexed by $B$. We write $\F^{m\times B}$ as a shorthand for $\F^{\{1,2,\dots, m\}\times B}$, and similarly $\F^{A\times n}$ as a shorthand for $\F^{A\times \{1,2,\dots, n\}}$.
\end{notation}

\begin{definition}[Rank and nullity]\label{definition: rank}
Let $M=(E,\mathcal{D})$ be a matroid on the finite ground set $E$. The {\em rank function} $r:2^E\to \N$ is defined by $$r(S)=\max\{|T|: T\subseteq S, S\not\in\mathcal{D}\},$$ and the \emph{nullity function} $n:2^E\to \N$ is defined by $n(S)=|S|-r(S)$.
\end{definition}

It follows from property (D2) that rank and nullity are both increasing functions on $2^E$, and from (D3) that the rank function is submodular and the nullity function is supermodular, meaning that $$r(S)+r(T)\geq r(S\cap T)+r(S\cup T)$$ and $$n(S)+n(T)\leq n(S\cap T)+n(S\cup T)$$ for all $S, T\subseteq E$~\cite{oxley2011matroid}.

Let $R$ be a matrix with entries in the field $\mathbb{F}$. The matroid associated to $R$ is a matroid $M(R)$ whose ground set is the set of columns of $R$ and a subset of columns is said to independent if the corresponding submatrix is full-rank. A matroid $M$ is called \emph{representable} over the field $\mathbb{F}$ if there exists some $R \in \mathbb{F}^{m \times E}$ such that $M(R)=M$. We sometimes refer to the row space of $R$ as a code representing $M$.

We will also need the notion of {\em fundamental circuits} with respect to a fixed basis $B$ of $M$.
\begin{definition}\label{def:fund_circuits}
Let $B$ be a basis for $M$. For $e\in E\setminus B$, the \emph{fundamental circuit} $C_{eB}$ is the unique circuit contained in $B\cup \{e\}$.
 \end{definition}
Uniqueness follows as $n(B\cup \{e\})=1$, and since $B$ is independent it follows that $e\in C_{eB}$.

\section{Earlier definitions of derived matroid}\label{sec: earlier definitions}

Several authors have investigated structures similar to the combinatorial derived matroid we are about to define. As explained before, the idea goes back to Crapo and Rota. We will describe four approaches to the question and explain how these are related. 

\subsection{Longyear}

Longyear~\cite{longyear1980circuit} introduced the notion of the derived matroid when $M$ is a binary matroid. The primary motivation in~\cite{longyear1980circuit} was to display the essential structure of a binary matroid in a computationally more tractable way. To do so, Longyear defined  the notion of circuit bases, and correspondingly dependent sets of circuits, in terms of {\em Kirkhoff sums}, also known as iterated symmetric difference, and equivalent to binary sums. The Kirkhoff sum of the sets $A_1,\dots A_k$ is the set $$A_1\oplus\dots\oplus A_k=\left\{x\in\bigcup_{i=1}^k A_i : \# \{i: x\in A_i\}\mbox{ is odd}\right\}.$$ 

\begin{definition}
A circuit basis in a matroid $M$ is a minimal collection $A\subseteq\mathcal{C}(M)$ such that every circuit in $\mathcal{C}(M)$ is the Kirkhoff sum of some subset of $A$.
\end{definition}

It follows from the definition that a set $A \subseteq \mathcal{C}$ is dependent in $\delta M$ precisely if it has a non-empty subset $A' \subseteq A$ such that the Kirkhoff sum of the circuits in $A'$ is empty. In other words, $A'\subseteq A$ should be such that every element in $E$ is be contained in an even number of the sets in $A'$. Longyear showed that under this definition, the derived matroid of a binary matroid is also a binary matroid. However, for a non-binary matroid, the set of circuit bases have no reason to constitute the set of basis for a matroid.

\begin{definition}
Let $M$ be a binary matroid. The matroid whose ground set is $\mathcal{C}(M)$ and whose bases are the circuit bases in $M$ is called the \emph{Longyear derived matroid}, and is denoted $\delta_{L}(M)$.
\end{definition}

\subsection{The Oxley--Wang definition}\label{sec: Oxley Wang construction}

A more general definition was introduced in~\cite{oxley2019dependencies} for represented matroids, which
are matroids along with a fixed representation. The derived matroid $\delta M(R)$ of a representation $R$  of the matroid $M$ is determined by the generator matrix whose column vectors are the minimal support vectors in the dual of the given representation of $M$. By construction, such derived matroids are representable.

Let $M=(E,\mathcal{C})$ be a  matroid on the finite ground set $E$ with the set of circuits $\mathcal{C}$. To motivate our further definitions, we first consider the case of a matroid $M$ represented by a code $Q\subseteq \F^n$, with dual code $Q^\perp\subseteq \F^n$ for some field $\F$. As is commonplace in coding theory, for $S\subseteq[n]$, we denote by $Q|_S$ the projection of $Q$ to the coordinates in $S$, and by $Q(S)$ the {\em shortened code} $$Q(S)=\{q\in Q :\supp(q)\subseteq S\}.$$ For a circuit $C\in\mathcal{C}(M)$, there is a codeword $q_C$ in $Q^\perp$ with $\supp(q_C)=C$, and such a codeword is unique up to scalar multiple. We call $q_C$ a {\em circuit vector} of $Q$ on $C$. 
\begin{definition}\label{def:derived_code}
Let $Q\subseteq \F^n$ be a linear code, and let $\C$ be the collection of circuits of the associated matroid. Moreover, let $A\in\F^{n\times\C}$ be the matrix that has a circuit vector $q_C$ in the column indexed by the circuit $C\in\C$. We define the {\em derived code} $\delta(Q)$ to be the row span of $A$, and the {\em Oxley--Wang derived matroid} $\delta_{OW}(Q)$ to be the matroid represented by $\delta(Q)$. \end{definition}

Clearly, if $\mathbf{v}\in Q^\perp$ is not a circuit vector, then $\mathbf{v}$ can be written as the sum of two vectors in $Q^\perp$ whose support is strictly contained in $\supp \mathbf{v}$. Thus, $Q^\perp$ is generated by its minimal support vectors, {i.e.} the circuit vectors. It immediately follows that the column space of the matrix $A$ in Definition~\ref{def:derived_code} is $Q^\perp$, and so the rank of $\delta_{OW}$ is $\dim\delta(Q)=r(A)=\dim(Q^\perp)=n-\dim(Q)$.

We can also present a family of explicit bases for $\delta_{OW}$ via fundamental circuits. 
 For a fixed basis $B$ and distinct elements $e,f\not\in B$, we clearly have $e\not\in C_{fB}$. 
 It follows that the collection $$\delta B=\left\{C_{eB} : e\in E\setminus B\right\}$$ is independent in $\delta_{OW}$. Since $|\delta B|=|E\setminus B|=n-\dim Q$, we see that $\delta B$ is a basis for $\delta_{OW}(Q)$ for every basis $B$ of $M(Q)$.

\subsection{Jurrius--Pellikaan}\label{sec:derived code JP}

Independently of the work in~\cite{longyear1980circuit}, Jurrius and Pellikaan~\cite{jurrius2015coset} defined the derived code of a linear code. This work was then not noticed by Oxley and Wang~\cite{oxley2019dependencies}. Both constructions give a derived matroid for a representable matroid, and we will see that the definitions are dual to each other. The main difference is thus that the following definition from~\cite{jurrius2015coset} uses the cocircuits as the ground set for the derived code. The construction is based on the construction of a derived hyperplane arrangement given in \cite[\S 5.10]{crapo:2009}.

\begin{definition}[\cite{jurrius2015coset}, Definition 4.1]
Let $G$ be a $k \times n$ matrix of rank $k$ over a field $\mathbb{F}$. Let $J\subseteq \{ 1, \ldots ,n\}$. Let $\mathbf{g} _j$ be the $j$-th column of $G$. Let $G_J$ be the $k \times |J|$ submatrix of $G$ consisting of the columns $\mathbf{g}_j$ with $j$ in $J$. Let $G_J(X)$ be the $k \times (|J|+1)$ matrix obtained by adding the column $X=(X_1, \ldots ,X_k)^T$ to $G_J$, where $X_1, \ldots ,X_k$ are variables. Let $G_{i,J}$ be the $(k-1) \times |J|$ matrix obtained by deleting the $i$-th row of $G_J$. Let $\Delta_{J}(X)= \det (G_J(X))$ and $\Delta_{i,J}= \det (G_{i,J})$  in case $|J|=k-1$. Let $D(G)$ be the {\em derived matrix} of $G$ of size $k \times \binom{n}{k-1}$ with entries $\Delta_{i,J}$ with $i=1,\ldots ,k$ and  $J\subseteq \{ 1, \ldots ,n\}$ of size $k-1$ ordered lexicographically. The matrix $D_2(G)$ is obtained by removing zero columns and columns that are a scalar multiple of another column. The {\em derived code} $D_2(C)$ is the code with generator matrix $D_2(G)$.
\end{definition}

It was shown in \cite{jurrius2015coset} that the construction of $D_2(C)$ is independent of the choice of generator matrix $G$.
Geometrically, $D_2(G)$ is constructed as follows. For every subset of $k-1$ columns of $G$, check if they are independent in $\mathbb{F}^k$. If not, add a zero column to $D(G)$. If they are independent, take the coefficients of a linear form determining this $(k-1)$-space and make them into a column of $D(G)$. Then remove zeros and multiple subsets that span the same $(k-1)$-space.

From this geometric interpretation, one can see that this construction is the same as $\delta_{OW}$ of the dual matroid. Every set of $k-1$ columns of $G$ (i.e., elements of $M$) that has rank $k-1$ contributes to a column in $D(G)$. If another set of $k-1$ columns spans the same $(k-1)$-space, the corresponding column in $D(G)$ will be removed from $D_2(G)$. Hence all the columns of $D_2(G)$ correspond to a hyperplane of the matroid represented by $G$. Since there is a one-to-one correspondence between hyperplanes of $M$ and circuits of $M^*$, this gives the desired duality relation. Note that in the construction of $\delta_{OW}$ one gets a representation that is not of full rank, whereas the definition of $D_2(G)$ does, by definition, give a full rank matrix.

\subsection{Adjoint matroid}\label{sec:adjoint}

The construction of an adjoint of a matroid is closely related to that of the derived code. Where the derived code always exists, the adjoint does not. It was noted in ~\cite[Remark 4.12]{jurrius2015coset} that the construction of the derived code could not be generalised to matroids.

Let $L$ be a geometric lattice. That is, the lattice of flats of a simple matroid. Intuitively, we construct the adjoint of $L$ as follows: take the diagram of $L$, flip it upside down, and add some elements to make this lattice geometric again. Flipping the lattice upside down gives a lattice with rank function $n(A)=r(L)-r(A)$, the nullity function of the matroid. This function satisfies the first two of the rank axioms for matroids~\cite{oxley2011matroid}, but it is not submodular. So, we need to add new elements for every pair $A,B\in L$ such that $n(A\cup B)+n(A\cap B)>n(A)+n(B)$. This is the case for every pair that is not modular in $L$.

We will now give a formal definition of an adjoint~\cite{cheung}.

\begin{definition}\label{definition: adjoint}
Let $L$ be a geometric lattice. Let $L^{opp}$ be the opposite lattice of $L$, that is, the lattice on the same elements with the order relation reversed. We say that $L^\Delta$ is an \emph{adjoint} of $L$ if $L^\Delta$ is a geometric lattice such that $r(L^\Delta)=r(L^{opp})$, there exists an injective embedding $e:L^{opp}\hookrightarrow L^\Delta$, and the points of $L^{opp}$ are in bijection with the points of $L^\Delta$. We write $\psi:L\to L^\Delta$ for the map that fist takes $L$ to $L^{opp}$ and then applies $e$.
\end{definition}

This seemingly not-too-demanding definition has some strict implications~\cite{cheung}:

\begin{lemma}\label{lemma: adjoint properties}
The map $\psi:L\to L^\Delta$ has the following properties:
\begin{enumerate}
\item If $A$ covers $B$ in $L$, then $\psi(B)$ covers $\psi(A)$ in $L^\Delta$.
\item For all $A\in L$, $r_{L^\Delta}(\psi(A))=r(L)-r(A)$.
\item For $A,B\in L$ we have that $\psi(A\vee B)=\psi(A)\wedge\psi(B)$.
\end{enumerate}
\end{lemma}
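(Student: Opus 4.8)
The plan is to transport all three statements across the order-reversing passage $L\to L^{opp}$ and reduce them to properties of the order-embedding $e\colon L^{opp}\hookrightarrow L^\Delta$. Writing $\bar A$ for the element $A$ regarded in $L^{opp}$, we have $\psi(A)=e(\bar A)$, and the passage $L\to L^{opp}$ turns the join $A\vee B$ into the meet $\bar A\wedge\bar B$ and turns a cover $A\gtrdot B$ in $L$ into a cover $\bar B\gtrdot\bar A$ in $L^{opp}$. Hence the three claims become, respectively: (1) $e$ preserves the covering relation; (2) $e$ preserves rank, i.e. $r_{L^\Delta}(e(\bar A))=r_{L^{opp}}(\bar A)$; and (3) $e$ preserves meets. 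Throughout I would rely on three standing facts: $L^\Delta$ is geometric, hence graded and \emph{atomistic}, so every element is the join of the atoms below it and its rank equals the length of a maximal chain beneath it; $L^{opp}$ is graded with rank function the nullity, $r_{L^{opp}}(\bar A)=r(L)-r(A)$; and, by the requirement that the points be in bijection, $e$ restricts to a bijection from the atoms of $L^{opp}$ onto the atoms of $L^\Delta$.

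First I would check that $e$ preserves $\hat 0$ and $\hat 1$. Pushing a maximal chain of $L^{opp}$ through $e$ yields, by injectivity and monotonicity, a strictly increasing chain of $L^\Delta$ of length $r(L^{opp})=r(L^\Delta)$; this is the maximum possible length, so the chain runs from $\hat 0_{L^\Delta}$ to $\hat 1_{L^\Delta}$, forcing $e(\hat 0)=\hat 0$ and $e(\hat 1)=\hat 1$. For (2) I would prove the two inequalities separately: pushing a saturated chain from $\hat 0$ to $\bar A$ through $e$ gives $r_{L^\Delta}(e(\bar A))\ge r_{L^{opp}}(\bar A)$, while pushing a saturated chain from $\bar A$ to $\hat 1$ through $e$, and using $e(\hat 1)=\hat 1$ together with $r(L^\Delta)=r(L^{opp})$, gives the reverse inequality. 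Property (1) is then immediate: if $\bar B\gtrdot\bar A$ then $e(\bar A)<e(\bar B)$, and by (2) their ranks differ by exactly $1$, which in a graded lattice means that $e(\bar B)$ covers $e(\bar A)$.

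The remaining and genuinely harder claim is (3), because order-embeddings need not preserve meets; monotonicity alone only supplies the easy inequality $e(\bar A\wedge\bar B)\le e(\bar A)\wedge e(\bar B)$. To obtain the reverse I would argue through atoms, exploiting that $L^\Delta$ is atomistic: $e(\bar A)\wedge e(\bar B)$ is the join of exactly those atoms of $L^\Delta$ lying below both $e(\bar A)$ and $e(\bar B)$. Each such atom is $e(p)$ for a unique atom $p$ of $L^{opp}$ by the bijection of points; and since $e$ is an order-embedding, $e(p)\le e(\bar A)$ and $e(p)\le e(\bar B)$ force $p\le\bar A$ and $p\le\bar B$, hence $p\le\bar A\wedge\bar B$ and therefore $e(p)\le e(\bar A\wedge\bar B)$. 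Taking the join over all such atoms gives $e(\bar A)\wedge e(\bar B)\le e(\bar A\wedge\bar B)$, and equality follows.

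The crux is thus (3): the point is precisely that a bare order-embedding would not suffice, so the argument must convert the meet into a statement about atoms, where the bijection of points and the order-reflecting property of $e$ finally close the gap. By comparison, (1) and (2) are routine chain-counting once $\hat 0$ and $\hat 1$ are shown to be preserved, so the main obstacle — and the place where the geometric (atomistic) structure of $L^\Delta$ is indispensable — is the meet-preservation in (3).
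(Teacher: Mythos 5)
The paper itself gives no proof of this lemma: it is stated as a known consequence of Definition~\ref{definition: adjoint} and attributed to \cite{cheung}, so there is no internal argument to compare yours against. Taken on its own, your proof is correct and is essentially the standard one. The chain-pushing argument for $e(\hat 0)=\hat 0$, $e(\hat 1)=\hat 1$ and for part (2) is sound (a chain of $r+1$ distinct elements in a graded lattice of rank $r$ is forced to be a saturated $\hat 0$--$\hat 1$ chain), part (1) then follows from gradedness, and the atomistic argument for the nontrivial inequality $e(\bar A)\wedge e(\bar B)\le e(\bar A\wedge\bar B)$ in part (3) is exactly where the hypothesis that $L^\Delta$ is geometric does its work. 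You are also right that (3) is the only place where a bare monotone injection would not suffice.

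Two points should be made explicit rather than left as standing facts. First, your step ``$e(p)\le e(\bar A)$ forces $p\le\bar A$'' uses that $e$ is order-\emph{reflecting}, not merely order-preserving and injective; Definition~\ref{definition: adjoint} says only ``injective embedding'', so you should record that you read ``embedding'' in the order-theoretic sense (this is the intended reading --- under a weaker one, part (3) is not provable from the definition). Second, the definition only asserts that the points of $L^{opp}$ and of $L^\Delta$ are in bijection, not that $e$ realizes this bijection; that $e$ restricts to a bijection on atoms is a consequence of your part (2) (atoms map to rank-one elements, injectively, between finite sets of equal cardinality). Since you establish (2) before invoking the atom bijection in (3), the logical order is fine, but as written this is asserted up front as if it were part of the hypothesis.
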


The construction of the derived code by Jurrius and Pellikaan gives in fact an adjoint: the map $\psi$ defined in \cite[Definition 4.15]{jurrius2015coset} satisfies the properties of the map $\psi$ in Definition \ref{definition: adjoint}.

\section{Construction of the combinatorial derived matroid}
\label{sec:combinatorial derived matroid}

We are now ready to present our definition of the combinatorial derived matroid $\delta M$ of a matroid $M$ with circuit set $\mathcal{C}$. In fact, we will make two such definitions, one of which defines the circuits and the other of which defines the dependent sets of $\delta M$. The definitions are very similar, but a priori different, and we will introduce them both before we prove that they indeed yield the same matroid.

For motivation, we first assume that $M$ is represented by some code $Q$ over a field $\F$. Like in Section~\ref{sec: Oxley Wang construction}, for each set $S$ of $M$ we define $$Q^\perp(S)=\{q\in Q^\perp :\supp(q)\subseteq S\},$$ and for each circuit $C$ of $M$ we let $q_C$ be a vector in $Q^\perp$ with $\supp(q_C)=C$, noting that $q_C$ is unique up to scalar multiple. 
For any collection $A\subseteq\mathcal{C}$ of circuits, we have $$\operatorname{span}\{ q_C : C\in A\}\subseteq Q^\perp(\cup_{C\in A} C),$$ where $\operatorname{span}$ denotes the linear span over $\F$. Hence, if \begin{equation}\label{eq:dimcond1}
    \dim\left(Q^\perp(\cup_{C\in A} C)\right)<|A|,
\end{equation} then the dual codewords $q_C :C\in A$ are linearly dependent regardless of the chosen representation $Q$, and so we would like them to be dependent in the combinatorial derived matroid. 

Now by standard duality arguments, $$Q^\perp(S)=(Q|_S)^\perp,$$ where $Q|_S$ is the projection of $Q$ to the coordinates in $S$. Thus $$\dim(Q^\perp(S))=|S|-\dim(Q|_S)=n(S),$$ so the condition \eqref{eq:dimcond1}, which is sufficient for dependence of $\{q_C : C\in A\}$ can be rewritten as 
\begin{equation}\label{eq:dimcond2}
    n(\cup_{C\in A} C)<|A|.
\end{equation}
For an arbitrary, not necessarily representable matroid, we build the definition of dependence of $A\subseteq \mathcal{C}$ in the combinatorial derived matroid on condition \eqref{eq:dimcond2}. First we define two operations on sets of circuits that we will later use to construct the dependent sets of the combinatorial derived matroid.

\begin{definition}\label{def:Operations}
Let $\mathcal{C}$ be the set of circuits of some matroid, and let $\mathcal{A}\subseteq \mathcal{C}$. Then we define the collection
$$\epsilon(\mathcal{A})=\mathcal{A}\cup\left\{(A_1 \cup A_2) \setminus \{C\} : A_1, A_2\in \mathcal{A}, A_1\cap A_2\not\in\mathcal{A}, C\in A_1\cap A_2\right\}.$$ Moreover, we define $${\uparrow}\mathcal{A}=\{A\subseteq \mathcal{C}: \exists A'\in\mathcal{A}: A'\subseteq A\},$$ 
and denote $\min\mathcal{A}\subseteq\mathcal{C}$ the collection of inclusion minimal sets in $\mathcal{A}$.
\end{definition}

 For any matroid with ground set $\mathcal{C}$, circuit set $\mathcal{B}$ and collection of dependent sets $\mathcal{A}$, we must have $\min\mathcal{A}=\mathcal{B}$ and ${\uparrow}\mathcal{B}=\mathcal{A}$.  Observe that, by definition, $\mathcal{A}\subseteq {{\uparrow} \mathcal{A}}$ and $\mathcal{A}\subseteq \epsilon(\mathcal{A})$ for every $\mathcal{A}\subseteq2^\mathcal{C}$. 
The operation $\epsilon: 2^\mathcal{C}\to 2^\mathcal{C}$ is designed to guarantee properties (C3) and (D3) in the matroid axioms.

\begin{definition}\label{def:combinatorial derived matroid sets} Let $M$ be a matroid, and $\mathcal{C}=\mathcal{C}(M)$ its collection of circuits. Define the collection \begin{equation}\label{eq:A0 definition}
  \mathcal{A}_0:=\{A \subseteq \mathcal{C}: |A|> n(\cup_{C\in A} C)\}.  
\end{equation} Inductively, we let $\mathcal{A}_{i+1}={\uparrow}\epsilon(\mathcal{A}_i)$ for $i\geq 1$, and $$\mathcal{A}=\bigcup_{i\geq 0} \mathcal{A}_i.$$ Analogously, we let $\mathcal{B}_0=\min\mathcal{A}_0$, $\mathcal{B}_{i+1}=\epsilon(\mathcal{B}_i)$ for $i\geq 1$, and $$\mathcal{B}=\min\bigcup_{i\geq 0} \mathcal{B}_i.$$

For a set $A\in\mathcal{A}_{i+1}\setminus\mathcal{A}_i$, respectively $B\in\mathcal{B}_{i+1}\setminus\mathcal{B}_i$, we say that it has {\em depth} $i+1$ in $\mathcal{A}$ or $\mathcal{B}$ respectively.
\end{definition}

Note that the sequences $\mathcal{A}_i$ and $\mathcal{B}_i$ are both increasing and contained in the finite set $2^\mathcal{C}$. Hence, we have that $\mathcal{A}_0\subseteq\mathcal{A}$ and $\mathcal{A}=\mathcal{A}_n$ for some $n\geq 0$, and analogously $\mathcal{B}_0\subseteq\mathcal{B}$ and $\mathcal{B}=\mathcal{B}_n$ for some $n\geq 0$.
\begin{definition}\label{def:combinatorial derived matroid}
Let $M$ be a matroid with circuits $\mathcal{C}$. Then the \emph{combinatorial derived matroid} $\delta M$ is a matroid with ground set $\mathcal{C}(M)$ and dependent sets $\mathcal{A}$. Alternatively, we can define $\delta M$ by its circuits $\mathcal{B}$.
\end{definition}

We will prove, in Propositions~\ref{prop:dM_is_matroid} and \ref{proposition: do operation 2 once}, that these definitions gives indeed a matroid and that the two definitions are the same. After this, we will define yet another way to construct the circuit set $\mathcal{B}$, in Proposition~\ref{proposition: construction of circuits}.

\begin{remark}
 Rewriting axiom (D3) in the logically equivalent form:
 \begin{align*}
    &\mbox{if }D_1, D_2\in\mathcal{D},\\ &\mbox{then } D_1\cap D_2\in\mathcal{D} \mbox{ or } (D_1\cup D_2)\setminus\{e\}\in\mathcal{D} \mbox{ for all } e\in D_1\cap D_2,\end{align*}
we could in principle for every $A_1, A_2\in\mathcal{A}_i$ such that $A_1\cap A_2\not\in\mathcal{A}_i$, have chosen to include either $A_1\cap A_2$ or $(A_1\cup A_2)\setminus\{C\}$ into $\mathcal{A}_{i+1}$. By defining $\mathcal{A}_{i+1}$ via the operation $\epsilon$, as in Definition~\ref{def:combinatorial derived matroid sets}, we guarantee to not construct any {\bf small} dependent sets when not necessary. This appears as a desirable property for the constructed derived matroid to be ``generic''.
    Indeed, consequently choosing to include $D_1\cap D_2$ in $\mathcal{A}$ for all prescribed dependent sets $D_1, D_2\in\mathcal{A}$ would result in a rather less interesting matroid --- often even with rank $0$. 
\end{remark}

\begin{notation}
When $A\subseteq\mathcal{C}$, we write $n(A)$ for $n(\cup_{C\in A} C)$ in the matroid $M$ and $\operatorname{supp}(A)$ for $\bigcup_{C \in A}C$, which stands for the \emph{support} of $A$.
\end{notation}

\begin{lemma}\label{lem: cardinality nondecreasing}
Let $A\in\mathcal{A}_{i+1}$. Then there is an $A'\in\mathcal{A}_i$ such that $|A'|\leq|A|$.
\end{lemma}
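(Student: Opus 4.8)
The plan is to unwind the recursive definition $\mathcal{A}_{i+1}={\uparrow}\epsilon(\mathcal{A}_i)$ and reduce the statement to a single elementary cardinality comparison. Given $A\in\mathcal{A}_{i+1}$, the definition of the up-closure ${\uparrow}$ supplies a set $A''\in\epsilon(\mathcal{A}_i)$ with $A''\subseteq A$, and hence $|A''|\leq|A|$. It therefore suffices to produce some $A'\in\mathcal{A}_i$ with $|A'|\leq|A''|$, since the two inequalities then chain together to give $|A'|\leq|A|$.

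Next I would split according to how $A''$ arises in $\epsilon(\mathcal{A}_i)$. If $A''\in\mathcal{A}_i$ already, then we are done immediately by taking $A'=A''$. The only remaining case is that $A''=(A_1\cup A_2)\setminus\{C\}$ for some $A_1,A_2\in\mathcal{A}_i$ with $A_1\cap A_2\notin\mathcal{A}_i$ and $C\in A_1\cap A_2$. Since $C$ lies in $A_1\cup A_2$, deleting it drops the cardinality by exactly one, so $|A_1\cup A_2|=|A''|+1$.

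The crux is to exploit the hypothesis $A_1\cap A_2\notin\mathcal{A}_i$, which I expect to be the main (though short) point of the proof. Because $A_2\in\mathcal{A}_i$ but $A_1\cap A_2\notin\mathcal{A}_i$, we cannot have $A_2\subseteq A_1$: that would force $A_1\cap A_2=A_2\in\mathcal{A}_i$, a contradiction. Hence $A_1$ is a \emph{proper} subset of $A_1\cup A_2$, so $|A_1|<|A_1\cup A_2|=|A''|+1$, that is, $|A_1|\leq|A''|\leq|A|$. Taking $A'=A_1\in\mathcal{A}_i$ completes the argument. By the symmetric argument (using $A_1\in\mathcal{A}_i$ to rule out $A_1\subseteq A_2$) the set $A_2$ is likewise a proper subset of the union, so either generator of $A''$ serves equally well as the required predecessor of no larger cardinality.
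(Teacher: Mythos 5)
Your proof is correct and follows essentially the same route as the paper's: reduce via the up-closure to a set in $\epsilon(\mathcal{A}_i)$, then use the hypothesis $A_1\cap A_2\notin\mathcal{A}_i$ to show one generator is a proper subset of the union and hence has cardinality at most $|A|$. The only (immaterial) difference is that the paper rules out $A_1\subseteq A_2$ and takes $A'=A_2$, while you rule out $A_2\subseteq A_1$ and take $A'=A_1$; as you note, the argument is symmetric.
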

\begin{proof}
This is clear if $A\in\mathcal{A}_{i}$, so assume it is not. Moreover, since the inclusion minimal sets in $\mathcal{A}_{i+1}={\uparrow}\epsilon(\mathcal{A}_i)$ are contained in $\epsilon(\mathcal{A}_i)$, we may assume that 
$A\in\epsilon(\mathcal{A}_i)\setminus\mathcal{A}_i$. Thus, there exist $A_1,A_2\in\mathcal{A}_i$ with $A_1\cap A_2\notin\mathcal{A}_i$ and $A=(A_1\cup A_2)\backslash\{C\}$ for some $C\in A_1\cap A_2$. 
Then $A_1\not\subseteq A_2$ since $A_1\in\mathcal{A}_i$ but $A_1\cap A_2\not\in\mathcal{A}_i$. Hence we have $|A_2|\leq|A_1\cup A_2|-1=|A|$, so the lemma holds with $A'=A_2$.
\end{proof}

\begin{proposition}\label{prop:dM_is_matroid}
For any matroid $M=(E,\mathcal{C})$ 
the collection $\mathcal{A}$ is the collection of dependent sets of some matroid with ground set $\mathcal{C}$.
\end{proposition}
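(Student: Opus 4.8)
The plan is to verify that $\mathcal{A}$ satisfies the three dependent-set axioms (D1)--(D3) of Definition~\ref{definition: matroid}. The conceptual heart of the argument is that the increasing chain $\mathcal{A}_0\subseteq\mathcal{A}_1\subseteq\cdots$, which (as already noted after Definition~\ref{def:combinatorial derived matroid sets}) stabilizes inside the finite set $2^{\mathcal{C}}$, stabilizes precisely at a \emph{fixed point} of the operator $\mathcal{X}\mapsto{\uparrow}\epsilon(\mathcal{X})$. Concretely, writing $\mathcal{A}=\mathcal{A}_n$ for the stabilized value, one more step gives $\mathcal{A}_{n+1}={\uparrow}\epsilon(\mathcal{A}_n)=\mathcal{A}_n$, so that
\[
\mathcal{A}={\uparrow}\epsilon(\mathcal{A}).
\]
I would record this identity first, since once it is in hand (D2) and (D3) fall out immediately, and only (D1) needs a small separate induction.

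For (D2) I would observe that any collection of the form ${\uparrow}\mathcal{X}$ is upward closed: if $A\in{\uparrow}\mathcal{X}$ and $A\subseteq A'$, then some $A''\in\mathcal{X}$ has $A''\subseteq A\subseteq A'$, so $A'\in{\uparrow}\mathcal{X}$. As $\mathcal{A}={\uparrow}\epsilon(\mathcal{A})$ has exactly this form, it is upward closed, which is (D2). For (D3), suppose $A_1,A_2\in\mathcal{A}$ with $A_1\cap A_2\notin\mathcal{A}$ and fix any $C\in A_1\cap A_2$. These are exactly the hypotheses appearing in the definition of $\epsilon$ in Definition~\ref{def:Operations}, applied to the stabilized collection $\mathcal{A}$, so $(A_1\cup A_2)\setminus\{C\}\in\epsilon(\mathcal{A})\subseteq{\uparrow}\epsilon(\mathcal{A})=\mathcal{A}$. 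Since $C$ ranges over all of $A_1\cap A_2$, this yields (D3). Thus $\epsilon$ was built precisely to force (D3), and the fixed-point identity converts ``built to force'' into ``forces''. Here I would be careful to invoke the identity at the stabilized level (the condition $A_1\cap A_2\notin\mathcal{A}$), not at an intermediate level $\mathcal{A}_i$, since (D3) speaks about membership in the full collection $\mathcal{A}$.

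It remains to prove (D1), that $\emptyset\notin\mathcal{A}$, which I would establish by showing $\emptyset\notin\mathcal{A}_i$ for all $i$ by induction. The base case holds because $|\emptyset|=0=n(\emptyset)$, so $\emptyset$ fails the strict inequality $|A|>n(\operatorname{supp}A)$ defining $\mathcal{A}_0$. For the inductive step, note that $\emptyset\in{\uparrow}\mathcal{X}$ holds only if $\emptyset\in\mathcal{X}$, since the only subset of $\emptyset$ is $\emptyset$ itself; hence it suffices to check $\emptyset\notin\epsilon(\mathcal{A}_i)$. A set newly produced by $\epsilon$ has the form $(A_1\cup A_2)\setminus\{C\}$ with $A_1,A_2\in\mathcal{A}_i$ (hence nonempty, by the inductive hypothesis) and $A_1\cap A_2\notin\mathcal{A}_i$. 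If this set were empty, then $A_1\cup A_2\subseteq\{C\}$, forcing $A_1=A_2=\{C\}$; but then $A_1\cap A_2=\{C\}=A_1\in\mathcal{A}_i$, contradicting $A_1\cap A_2\notin\mathcal{A}_i$. Hence $\emptyset\notin\epsilon(\mathcal{A}_i)$, completing the induction and (D1).

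I expect the only genuinely non-formal point to be this exclusion of $\emptyset$ in (D1): it rests on the fact that $\epsilon$ can never collapse two nonempty circuit-collections to the empty set. Everything else is bookkeeping once the fixed-point identity $\mathcal{A}={\uparrow}\epsilon(\mathcal{A})$ is recorded, so the bulk of the writeup should be devoted to stating that identity cleanly and to the short induction for (D1).
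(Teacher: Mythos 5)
Your proof is correct and follows essentially the same route as the paper: a direct verification of (D1)--(D3), with (D2) and (D3) coming from the closure of $\mathcal{A}$ under $\uparrow$ and $\epsilon$, and (D1) from an induction showing $\emptyset$ never enters any $\mathcal{A}_i$. The only cosmetic differences are that you package the stabilization as the fixed-point identity $\mathcal{A}={\uparrow}\epsilon(\mathcal{A})$ (where the paper argues at the level of a minimal index $i$ with $A_1,A_2\in\mathcal{A}_i$), and that your (D1) step argues directly that $\epsilon$ cannot produce $\emptyset$ rather than citing Lemma~\ref{lem: cardinality nondecreasing}; both are sound.
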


\begin{proof}
We will confirm that $\delta M=(\mathcal{C},\mathcal{A})$ satisfies the axioms for dependent sets of a matroid as given in Definition~\ref{definition: matroid}.
\begin{enumerate}
    \item Since $|\emptyset|=0$ and nullity is non-negative, $\emptyset \notin \mathcal{A}_0$. By Lemma~\ref{lem: cardinality nondecreasing}, we inductively have $\emptyset\not\in\mathcal{A}_i$ for every $i\geq 0$, so $\emptyset\not\in\mathcal{A}$. 
    \item If $A_1 \in \mathcal{A}$ then there exists some minimal $i$ such that $A_1 \in \mathcal{A}_i$. Then for all $A_2 \subseteq \mathcal{C}$ such that $A_1 \subseteq A_2$, $A_2 \in \mathcal{A}_{i+1}$ and $\mathcal{A}_{i+1} \subseteq \mathcal{A}$ by construction.
    \item Let $A_1$ and $A_2$ be two distinct sets in $\mathcal{A}$. If for some $i$, $A_1 \cap A_2 \in \mathcal{A}_i$, then $A_1 \cap A_2 \in \mathcal{A}$ and there is nothing to check. Otherwise, $A_1 \cap A_2 \notin \mathcal{A}$ and there exists a minimal $i$ such that $A_1, A_2 \in \mathcal{A}_i$. Therefore, $\forall C \in A_1 \cap A_2$, $(A_1 \cup A_2) \backslash \{C\} \in \mathcal{A}_{i+1} \subseteq \mathcal{A}$. \qedhere
\end{enumerate}
\end{proof}

Next, we will study the circuits of the derived matroid $\delta M$ that we just defined. The collection of circuits is by definition $\min\mathcal{A}$, and we will show in Proposition~\ref{proposition: do operation 2 once} that this is equal to $\mathcal{B}$, as constructed in Definition~\ref{def:combinatorial derived matroid sets}.

\begin{lemma}[Circuits of $\delta M$]\label{lemma: derived circuits}  
Let $A\in\min\mathcal{A}$ have depth $i+1\geq 1$. Then 
there exist sets $A_1,A_2 \in \min\mathcal{A}$ of depth at most $i$, such that $A=(A_1 \cup A_2) \setminus \{C\}$ for some $C \in A_1 \cap A_2$. 
\end{lemma}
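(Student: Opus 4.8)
The plan is to peel one $\epsilon$-step off the construction of $A$ and then shrink the two sets it produces down to genuine circuits. First I would record that, since $A$ has depth $i+1$, we have $A\in\mathcal{A}_{i+1}\setminus\mathcal{A}_i={\uparrow}\epsilon(\mathcal{A}_i)\setminus\mathcal{A}_i$. By definition of ${\uparrow}$ there is some $A''\in\epsilon(\mathcal{A}_i)\subseteq\mathcal{A}$ with $A''\subseteq A$; as $A''$ is dependent and $A\in\min\mathcal{A}$ is a minimal dependent set, minimality forces $A''=A$, so in fact $A\in\epsilon(\mathcal{A}_i)$. Because $A\notin\mathcal{A}_i$, the only way $A$ can lie in $\epsilon(\mathcal{A}_i)$ (see Definition~\ref{def:Operations}) is that $A=(A_1'\cup A_2')\setminus\{C\}$ for some $A_1',A_2'\in\mathcal{A}_i$ with $A_1'\cap A_2'\notin\mathcal{A}_i$ and $C\in A_1'\cap A_2'$. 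These $A_1',A_2'$ have depth at most $i$, but need not be circuits, so the task is to replace them by circuits without destroying the elimination identity.

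The key structural observation is that $C$ is forced. Since $A_2'\in\mathcal{A}_i$ but $A_1'\cap A_2'\notin\mathcal{A}_i$, we have $A_2'\not\subseteq A_1'$, and hence $A_1'\setminus\{C\}=A_1'\cap A$ is a \emph{proper} subset of $A$. As $A$ is a circuit of $\delta M$, this proper subset is independent. Consequently every circuit of $\delta M$ contained in the dependent set $A_1'$ must contain $C$, for otherwise it would sit inside the independent set $A_1'\setminus\{C\}$. Choosing any circuit $A_1\subseteq A_1'$, and symmetrically $A_2\subseteq A_2'$, we therefore obtain $C\in A_1\cap A_2$ together with $(A_1\cup A_2)\setminus\{C\}\subseteq(A_1'\cup A_2')\setminus\{C\}=A$.

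To finish, I would invoke that $\delta M$ is a matroid (Proposition~\ref{prop:dM_is_matroid}), so its circuits satisfy axiom (C3). Provided $A_1\neq A_2$, axiom (C3) produces a circuit contained in $(A_1\cup A_2)\setminus\{C\}\subseteq A$; this set is then dependent, and the minimality of $A$ upgrades the inclusion to the equality $A=(A_1\cup A_2)\setminus\{C\}$, which is exactly the conclusion sought.

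Two points remain, and I expect the depth bound to be the real obstacle. The distinctness $A_1\neq A_2$ will follow once depth is controlled: if $A_1=A_2$, then this common circuit $A_0$ lies in $A_1'\cap A_2'$, and for $i\geq 1$ the upward-closedness of $\mathcal{A}_i$ forces $A_1'\cap A_2'\in\mathcal{A}_i$ as soon as $A_0$ has depth at most $i$, contradicting $A_1'\cap A_2'\notin\mathcal{A}_i$ (the base level $i=0$ requiring a direct check, since $\mathcal{A}_0$ need not be upward closed). The substantive claim to establish is therefore that \emph{every set in $\mathcal{A}_i$ contains a circuit of $\delta M$ of depth at most $i$}; applied to $A_1'$, all of whose circuits contain $C$, this yields an admissible $A_1$, and likewise $A_2$. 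I would prove this claim by induction on $i$: the base case $i=0$ asks that a minimal dependent set inside a collection $D$ with $|D|>n(\supp D)$ again satisfies the defining nullity inequality, which I would attack using supermodularity of $n$ together with the elimination description of $\mathcal{A}_0$, while the inductive step traces a circuit through the single $\epsilon$-step defining $\mathcal{A}_{i+1}$, using Lemma~\ref{lem: cardinality nondecreasing} to keep cardinalities, and hence the newly created circuits, under control. Pinning down this depth-monotonicity is where the argument is most delicate, since a priori a circuit of $\delta M$ could first be detected at a depth larger than that of a set containing it.
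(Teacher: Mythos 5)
Your opening moves coincide with the paper's: minimality of $A$ in $\mathcal{A}$ forces $A\in\epsilon(\mathcal{A}_i)$, hence $A=(A_1'\cup A_2')\setminus\{C\}$ with $A_1',A_2'\in\mathcal{A}_i$ and $A_1'\cap A_2'\notin\mathcal{A}_i$; and your observation that $A_1'\setminus\{C\}$ is a \emph{proper} subset of the circuit $A$, so that every circuit of $\delta M$ contained in $A_1'$ must contain $C$, is correct and appealing. The gap is in what follows. You replace $A_1',A_2'$ by circuits $A_1\subseteq A_1'$, $A_2\subseteq A_2'$ and then need these circuits to have depth at most $i$ (distinctness also hinges on this). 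You correctly identify that everything rests on the claim that every set in $\mathcal{A}_i$ contains a circuit of $\delta M$ of depth at most $i$ --- but you do not prove it, and it is not a technicality: it is equivalent to the assertion that every minimal element of $\mathcal{A}_i$ is already a circuit of $\delta M$, which is essentially the content of Proposition~\ref{proposition: do operation 2 once}, a result the paper \emph{deduces from} this lemma. Since the collections $\mathcal{A}_i$ are not downward closed, there is no a priori obstruction to a circuit contained in a member of $\mathcal{A}_i$ first appearing at a stage $j>i$; that is the entire difficulty of the lemma. Your sketched induction does not close it: already the base case you describe (that a minimal dependent set inside some $D\in\mathcal{A}_0$ again satisfies the nullity inequality) requires ruling out that a proper subset of a minimal element of $\mathcal{A}_0$ enters $\mathcal{A}$ at a later stage, which is the same problem over again, and supermodularity of $n$ alone will not see sets created by later $\epsilon$-steps.

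The paper avoids the depth question entirely by never shrinking to circuits. It keeps $A_1,A_2\in\mathcal{A}_i$, so the depth bound is automatic, but chooses them inclusion-minimal \emph{as witnesses}: no $A_1''\subsetneq A_1$ satisfies $A=(A_1''\cup A_2)\setminus\{C\}$, and symmetrically for $A_2$. It then shows these witnesses are themselves circuits: if $A_1$ properly contained a circuit $A_3$, then either $C\notin A_3$, whence $A_3\subsetneq A$ contradicts $A\in\min\mathcal{A}$, or $C\in A_3$, whence $A_3\cap A_2$ is a proper subset of a circuit and hence not in $\mathcal{A}$, so axiom (D3) applied to the matroid $(\mathcal{C},\mathcal{A})$ of Proposition~\ref{prop:dM_is_matroid} puts $(A_3\cup A_2)\setminus\{C\}$ in $\mathcal{A}$, and the minimal-witness choice makes this a proper subset of $A$, again a contradiction. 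That argument uses only membership in $\mathcal{A}$, never the depth of $A_3$ --- exactly the information your route lacks. To salvage your structure you would need to run the lemma and your depth-monotonicity claim as a simultaneous induction on $i$, with a separate treatment of $i=0$ where $\mathcal{A}_0$ is not upward closed; as written, the proposal is incomplete at the crux of the statement.
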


\begin{proof}
The assumption says that $A \in\mathcal{A}_{i+1}\setminus \mathcal{A}_i$. Since $A$ is minimal in $\mathcal{A}$, it must in particular be minimal in $\mathcal{A}_{i+1}={\uparrow}\epsilon\mathcal{A}_i$, so $A\in\epsilon\mathcal{A}_i$. Hence, $A=(A_1 \cup A_2) \setminus \{C\}$ for some $C \in A_1 \cap A_2$ and $A_1,A_2 \in \mathcal{A}_i\subseteq\mathcal{A}$. Note that there is a choice involved for $A_1$ and $A_2$: we pick them such that there is no $A_1'\subsetneq A_1$ with $A=(A_1' \cup A_2) \setminus \{C\}$ and no $A_2'\subsetneq A_2$ with $A=(A_1 \cup A_2') \setminus \{C\}$.

We now need to show that $A_1,A_2\in\min\mathcal{A}$. For contradiction, assume $A_1$ is not a circuit of $\delta M$. Then it contains a circuit $A_3 \subsetneq A_1$. If $A_3$ does not contain $C$, then $A_3 \subsetneq (A_1 \cup A_2) \backslash \{C\} =A$, which contradicts the assumption that $A$ is a circuit of $\delta M$. On the other hand, if $C\in A_3$, then $(A_3 \cap A_2)\subsetneq A_3\in\min\mathcal{A}$ so $(A_3 \cap A_2)\notin \mathcal{A}$. But then we would have $A\supsetneq(A_3 \cup A_2) \setminus \{C\} \in \mathcal{A}$ by the assumption on $A_1$, which again contradicts the assumption that $A\in\min\mathcal{A}$. The claim follows. 
\end{proof}

\begin{proposition}\label{proposition: do operation 2 once}
Let $M$ be a matroid and let $\mathcal{B}\subseteq 2^\mathcal{C}$ be constructed as in Definition~\ref{def:combinatorial derived matroid sets}. Then $\mathcal{B}$ is the collection of circuits of $\delta M$.
\end{proposition}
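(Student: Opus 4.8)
The plan is to identify $\mathcal{B}=\min\bigcup_i\mathcal{B}_i$ with the circuits $\min\mathcal{A}$ of $\delta M$ through two complementary statements. The first is a \emph{soundness} statement, $\bigcup_i\mathcal{B}_i\subseteq\mathcal{A}$, asserting that every set produced by the $\epsilon$-only process is dependent in $\delta M$; the second is a \emph{completeness} statement, $\min\mathcal{A}\subseteq\bigcup_i\mathcal{B}_i$, asserting that every circuit of $\delta M$ is produced. Granting both, the equality of minimal elements is purely order-theoretic: if $A\in\min\mathcal{A}$ then $A\in\bigcup_i\mathcal{B}_i$ by completeness, and any member of $\bigcup_i\mathcal{B}_i$ contained in $A$ is dependent by soundness, hence equals $A$; conversely, if $B\in\min\bigcup_i\mathcal{B}_i$ then $B\in\mathcal{A}$ by soundness, so $B$ contains a circuit $A\in\min\mathcal{A}$, which lies in $\bigcup_i\mathcal{B}_i$ by completeness and is therefore equal to $B$.

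Soundness I would prove by induction on $i$, the base case $\mathcal{B}_0=\min\mathcal{A}_0\subseteq\mathcal{A}_0\subseteq\mathcal{A}$ being immediate. In the inductive step a new set has the form $F=(B_1\cup B_2)\setminus\{C\}$ with $B_1,B_2\in\mathcal{B}_i\subseteq\mathcal{A}$, $C\in B_1\cap B_2$ and $B_1\cap B_2\notin\mathcal{B}_i$, and I must show $F\in\mathcal{A}$. If $B_1\cap B_2\notin\mathcal{A}$, this is exactly axiom (D3) for the matroid $\delta M$ furnished by Proposition~\ref{prop:dM_is_matroid}. The delicate case is $B_1\cap B_2\in\mathcal{A}$, where the hypothesis of (D3) fails precisely because the clause in $\epsilon$ tests membership in $\mathcal{B}_i$ rather than in all of $\mathcal{A}$. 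To treat it I would try to exhibit a circuit of $\delta M$ inside $F$: if $B_1$ or $B_2$ contains a $\delta M$-circuit avoiding $C$ we are done, and otherwise, choosing circuits $E_1\subseteq B_1$ and $E_2\subseteq B_2$ with $C\in E_1\cap E_2$, circuit elimination (C3) in $\delta M$ produces a circuit inside $(E_1\cup E_2)\setminus\{C\}\subseteq F$ as soon as $E_1\neq E_2$.

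I expect the main obstacle to be exactly the residual degenerate configuration left over from this argument, namely when $B_1$ and $B_2$ share one and the same unique $\delta M$-circuit $E$ through $C$ with $E\subsetneq B_1\cap B_2$. A nullity computation in $\delta M$ (via supermodularity of its nullity function) then permits $(B_1\cup B_2)\setminus\{C\}$ to be independent, so $F$ could fail to be dependent; hence the whole proposition hinges on showing that such a configuration is never generated by the construction. I would attempt this by strengthening the inductive hypothesis with an invariant on $\mathcal{B}_i$ — for instance, that every dependent member of $\mathcal{B}_i$ already contains a $\delta M$-circuit isolated at an earlier stage, which forbids two members from sharing a single not-yet-separated circuit — using Lemma~\ref{lem: cardinality nondecreasing} to control how cardinalities and depths evolve along the $\epsilon$-iteration. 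This invariant is what makes soundness go through.

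Completeness is comparatively routine and I would prove it by induction on the depth of a circuit, driven entirely by Lemma~\ref{lemma: derived circuits}. A circuit of depth $0$ is a minimal element of $\mathcal{A}$ lying in $\mathcal{A}_0$, hence lies in $\mathcal{B}_0=\min\mathcal{A}_0$. For a circuit $A\in\min\mathcal{A}$ of depth $i+1$, Lemma~\ref{lemma: derived circuits} writes $A=(A_1\cup A_2)\setminus\{C\}$ with $A_1,A_2\in\min\mathcal{A}$ of smaller depth and $C\in A_1\cap A_2$; by the inductive hypothesis $A_1,A_2$ lie in a common $\mathcal{B}_k$. Since $A_1,A_2$ are distinct circuits, $A_1\cap A_2$ is a proper subset of a circuit, hence independent, so $A_1\cap A_2\notin\mathcal{A}$ and therefore $A_1\cap A_2\notin\mathcal{B}_k$ once soundness is available; the $\epsilon$-clause then fires and places $A$ in $\epsilon(\mathcal{B}_k)=\mathcal{B}_{k+1}$. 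Thus the completeness direction and the closing order-theoretic comparison reduce to the soundness invariant, which is the heart of the argument.
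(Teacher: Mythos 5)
Your top-level decomposition (soundness $\bigcup_i\mathcal{B}_i\subseteq\mathcal{A}$, completeness $\min\mathcal{A}\subseteq\bigcup_i\mathcal{B}_i$, then an order-theoretic gluing) is a valid strategy, and your completeness half together with the gluing is correct and closely parallels the paper, which proves the proposition by a single induction showing $\min\mathcal{B}_i=\min\mathcal{A}_i$ for every $i$, with the inductive step supplied by Lemma~\ref{lemma: derived circuits} exactly as in your depth induction. The genuine gap is at the step you yourself call the heart: soundness is never proved. You correctly isolate the difficulty --- the $\epsilon$-clause on $\mathcal{B}_i$ tests $B_1\cap B_2\notin\mathcal{B}_i$, which is weaker than the hypothesis $B_1\cap B_2\notin\mathcal{A}$ of axiom (D3) for the matroid of Proposition~\ref{prop:dM_is_matroid}, so the clause can fire when $B_1\cap B_2\in\mathcal{A}\setminus\mathcal{B}_i$ --- and you correctly reduce to the residual configuration in which the unique circuit $E$ of $\delta M$ contained in $B_1$ and in $B_2$ satisfies $C\in E\subseteq B_1\cap B_2$, where $(B_1\cup B_2)\setminus\{C\}$ can indeed be independent in $\delta M$. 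At that point you only \emph{name} an invariant (``every dependent member of $\mathcal{B}_i$ already contains a circuit isolated at an earlier stage'') without formulating it precisely or showing it is preserved by $\epsilon$. Since your completeness induction also invokes soundness (to upgrade $A_1\cap A_2\notin\mathcal{A}$ to $A_1\cap A_2\notin\mathcal{B}_k$ so that the clause fires), the entire argument hangs on this unproved claim; as written it is a proof plan, not a proof.

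For comparison, the paper does not split into two inclusions: it asserts, citing Lemma~\ref{lemma: derived circuits}, that the sets in $\min\mathcal{A}_{i+1}\setminus\mathcal{A}_i$ are \emph{precisely} those obtained from $\min\mathcal{A}_i$ via $\epsilon$, and concludes $\min\mathcal{B}_{i+1}=\min\mathcal{A}_{i+1}$. Note, though, that the lemma as stated only yields the direction corresponding to your completeness; the converse (that $\epsilon\mathcal{B}_i$ creates no spurious minimal sets lying outside $\mathcal{A}_{i+1}$) is exactly the issue you flagged, and the paper treats it very briskly. So you have put your finger on a real subtlety in this proposition; but identifying the obstruction is not the same as overcoming it, and your proposal does not overcome it.
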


\begin{proof}
We need to show that $\mathcal{B}=\min\mathcal{A}$, and we will show by induction that $\min\mathcal{B}_i=\min\mathcal{A}_i$ for every $i$.

By definition, we have $\min\mathcal{A}_0=\mathcal{B}_0$. Now Lemma~\ref{lemma: derived circuits} shows that the sets in $\min\mathcal{A}_{i+1}\setminus\mathcal{A}_i$ are precisely those that can be obtained from sets in $\min\mathcal{A}_i$ via the $\epsilon$ operation. Since by definition we have $\mathcal{B}_{i+1}=\epsilon\mathcal{B}_i$, the claim follows by induction.
\end{proof}

From our previous results, it follows that we have the next construction of the circuits of the combinatorial derived matroid.

\begin{proposition}[Construction of circuits of $\delta M$]\label{proposition: construction of circuits}
The set of circuits of $\delta M$ can be constructed iteratively as follows:
\begin{enumerate}
    \item Let $\mathcal{E}_0=\min\mathcal{A}_0$.
    \item Let $\mathcal{E}_{i+1}=\min\epsilon\mathcal{E}_i$ for all $i\geq 0$. 
    \item The sequence $\mathcal{E}_i$ terminates, and its limit equals the collection $\mathcal{B}$ of circuits of $\delta M$.
   
\end{enumerate}
\end{proposition}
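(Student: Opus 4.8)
The plan is to prove that the sequence $\mathcal{E}_i$ computes the same objects as $\mathcal{B}_i$ from Definition~\ref{def:combinatorial derived matroid sets}, up to taking inclusion-minimal elements, and thereby reduce everything to Proposition~\ref{proposition: do operation 2 once}, which already identifies $\mathcal{B}=\min\bigcup_i\mathcal{B}_i$ with the circuits $\min\mathcal{A}$ of $\delta M$. Concretely, I would show by induction on $i$ that $\min\mathcal{E}_i=\min\mathcal{B}_i$ for every $i$. The base case is immediate, since $\mathcal{E}_0=\min\mathcal{A}_0=\mathcal{B}_0$ by definition, and $\min\mathcal{B}_0=\mathcal{B}_0$ because $\mathcal{B}_0=\min\mathcal{A}_0$ is already an antichain.

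The only genuine content is the inductive step, and it hinges on a single commutation-type fact: that inserting a $\min$ in the middle of iterated $\epsilon$-applications does not change the inclusion-minimal elements of the output. More precisely, the key claim I would isolate is that for any $\mathcal{A}'\subseteq2^\mathcal{C}$ one has $\min\epsilon(\min\mathcal{A}')=\min\epsilon(\mathcal{A}')$. Granting this, the induction proceeds cleanly: assuming $\min\mathcal{E}_i=\min\mathcal{B}_i$, we compute
\begin{align*}
\min\mathcal{E}_{i+1}=\min\epsilon(\mathcal{E}_i)=\min\epsilon(\min\mathcal{E}_i)=\min\epsilon(\min\mathcal{B}_i)=\min\epsilon(\mathcal{B}_i)=\min\mathcal{B}_{i+1},
\end{align*}
where the second and fourth equalities are the key claim applied to $\mathcal{A}'=\mathcal{E}_i$ and $\mathcal{A}'=\mathcal{B}_i$ respectively, and the last equality uses $\mathcal{B}_{i+1}=\epsilon(\mathcal{B}_i)$. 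Since the sequence $\mathcal{B}_i$ stabilizes (as noted after Definition~\ref{def:combinatorial derived matroid sets}, $\mathcal{B}=\mathcal{B}_n$ for some $n$), the same holds for $\min\mathcal{E}_i$, giving termination, and the common limit equals $\min\bigcup_i\mathcal{B}_i=\mathcal{B}$, which is the circuit set of $\delta M$ by Proposition~\ref{proposition: do operation 2 once}.

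The main obstacle is establishing the key claim $\min\epsilon(\min\mathcal{A}')=\min\epsilon(\mathcal{A}')$. The inclusion $\min\mathcal{A}'\subseteq\mathcal{A}'$ gives $\epsilon(\min\mathcal{A}')\subseteq\epsilon(\mathcal{A}')$, so every minimal element of the right-hand side that happens to lie in the left-hand set is minimal there too; the nontrivial direction is to show that a minimal element $A=(A_1\cup A_2)\setminus\{C\}$ produced from general $A_1,A_2\in\mathcal{A}'$ is already reproduced, or dominated, by an $\epsilon$-step applied to \emph{minimal} sets $A_1'\subseteq A_1$, $A_2'\subseteq A_2$. This is exactly the replacement argument carried out in the proof of Lemma~\ref{lemma: derived circuits}: given circuits $A_1'\subseteq A_1$ and $A_2'\subseteq A_2$ in $\min\mathcal{A}'$, one checks case by case on whether $C\in A_1'$ and $C\in A_2'$ that $(A_1'\cup A_2')\setminus\{C\}$ either equals $A$ or is contained in some member of $\epsilon(\min\mathcal{A}')$ that is no larger, using the $\min$-minimality to rule out the degenerate possibilities. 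I expect this step to be routine but case-heavy, and I would streamline it by invoking Lemma~\ref{lemma: derived circuits} directly wherever its hypotheses are met, rather than re-deriving the replacement from scratch.
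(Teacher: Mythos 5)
Your overall architecture is the same as the paper's: reduce to Proposition~\ref{proposition: do operation 2 once}, prove $\mathcal{E}_i=\min\mathcal{B}_i$ by induction on $i$, and power the inductive step with the replacement argument of Lemma~\ref{lemma: derived circuits}. The genuine gap is that the key claim you isolate, $\min\epsilon(\min\mathcal{A}')=\min\epsilon(\mathcal{A}')$ for \emph{arbitrary} $\mathcal{A}'\subseteq 2^{\mathcal{C}}$, is false. Take $\mathcal{A}'=\{\,\{a\},\{a,b,c\},\{a,b,d\}\,\}$. Then $\min\mathcal{A}'=\{\,\{a\}\,\}$ and $\epsilon(\min\mathcal{A}')=\{\,\{a\}\,\}$, whereas the pair $\{a,b,c\},\{a,b,d\}$ has intersection $\{a,b\}\notin\mathcal{A}'$ and so $\epsilon(\mathcal{A}')$ contains $\{b,c,d\}$, which is inclusion-minimal there and contains no member of $\epsilon(\min\mathcal{A}')$. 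The failure mode is exactly the sub-case your replacement argument cannot handle: a minimal $A_1'\subsetneq A_1$ with $C\in A_1'$ and $A_1'\subseteq A_2$, so that $A_1'\cap A_2=A_1'\in\mathcal{A}'$ and the pair $(A_1',A_2)$ is not eligible for $\epsilon$. (Relatedly, your opening remark that $\min\mathcal{A}'\subseteq\mathcal{A}'$ ``gives'' $\epsilon(\min\mathcal{A}')\subseteq\epsilon(\mathcal{A}')$ is not automatic: $\epsilon$ is not monotone, since the clause $A_1\cap A_2\notin\mathcal{A}$ is antitone. For the specific pair $(\min\mathcal{A}',\mathcal{A}')$ the inclusion can be rescued by a short separate argument, but it does need one.)

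Consequently you cannot prove the commutation lemma in the generality you state it; it has to be established only for the families $\mathcal{B}_i$ actually produced by the construction, and that is precisely what the paper does: it takes a minimal $B\in\mathcal{B}_{i+1}\setminus\mathcal{B}_i$, writes $B=(B_1\cup B_2)\setminus\{C\}$ with $B_1,B_2$ chosen inclusion-minimal among producers of $B$, and derives a contradiction from a minimal $B_3\subsetneq B_1$, using along the way the hypothesis $B_1\cap B_2\notin\mathcal{B}_i$, which is available for such pairs and rules out, for instance, $B_3=B_1\cap B_2$. So the restriction to the $\mathcal{B}_i$ is doing real mathematical work and is not merely packaging. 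While my particular $\mathcal{A}'$ cannot itself occur as a $\mathcal{B}_i$ (singletons are never dependent in $\delta M$ by Lemma~\ref{lemma: delta M is simple}), the claim as you state it is refuted, and the burden in your write-up would be to identify and propagate through the induction whichever invariant of the $\mathcal{B}_i$ excludes the configuration $C\in B_3\subseteq B_2$; the ``routine but case-heavy'' verification you defer would otherwise stall exactly at that case.
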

\begin{proof}
From Proposition~\ref{proposition: do operation 2 once}, it suffices to show that $\min\mathcal{B}_i=\mathcal{E}_i$ for every $i\geq 0$. This follows by induction, if we can show that every element of depth $i$ in $\min\mathcal{B}$ can be written as $B_1\cup B_2\setminus\{C\}$ for some {\em minimal} sets in $\mathcal{B}_{i-1}$.  
The proof is similar to that of Lemma~\ref{lemma: derived circuits}. Let $B$ be a minimal element of $\mathcal{B}_{i+1}$ that is not in $\mathcal{B}_i$. By construction, $B\in\epsilon\mathcal{B}_i$, so, $B=(B_1 \cup B_2) \backslash \{C\}$ for some $C \in B_1 \cap B_2$, $B_1 \cap B_2 \not\in \mathcal{B}_i$, and $B_1,B_2 \in \mathcal{B}_i$. Moreover, we can choose $B_1, B_2$ such that there is no $B_1'\subsetneq B_1$ with $B=(B_1' \cup B_2) \setminus \{C\}$ and no $B_2'\subsetneq B_2$ with $B=(B_1 \cup B_2') \setminus \{C\}$.

For contradiction, assume $B_1$ is not a minimal element in $\mathcal{B}_i$. Then it contains a minimal element $B_3 \subsetneq B_1$. If $B_3$ does not contain $C$, then $B_3 \subsetneq (B_1 \cup B_2) \backslash \{C\} =B$, which contradicts the assumption that $B$ is a minimal element in $\mathcal{B}_{i+1}$. Alternatively, if $B_3$ contains $C$, then $(B_3 \cap B_2)\subsetneq B_3$, so $B_3 \cap B_2\notin \mathcal{B}_i$ because $B_3$ is a minimal element in $\mathcal{B}_i$. Hence $(B_3 \cup B_2) \backslash \{C\} \in \mathcal{B}_{i+1}$. However, $(B_3 \cup B_2) \backslash \{C\}\subsetneq (B_1 \cup B_2) \backslash \{C\}=B$, by the assumption on $B_1$, which is again a contradiction. The claim follows.
\end{proof}

We conclude this section by showing that the combinatorial derived matroid is a simple matroid, so it has no circuits of size $1$ and $2$, but if $M$ is connected, it does have s a circuit of size $3$.

\begin{lemma}\label{lemma: delta M is simple}
Let $M$ be a matroid. Then $\delta M$ is simple, that is, there are no dependent sets of size $1$ or $2$.
\end{lemma}
\begin{proof}
By Lemma \ref{lem: cardinality nondecreasing} it suffices to show that there are no sets of size $1$ or $2$ in $\mathcal{A}_0$. If $A=\{C\}$, we have that $n(A)=n(C)=1$ hence $A\notin\mathcal{A}_0$. Now let $A=\{C_1,C_2\}$. Since $C_1$ and $C_2$ are not contained in one another, $C_1\cap C_2$ is independent in $M$ hence has nullity $0$. So, by supermodularity of the nullity function, we get that $n(A)=n(C_1\cup C_2)=n(C_1\cup C_2)+n(C_1\cap C_2)\geq n(C_1)+n(C_2)=2$ hence $A\notin\mathcal{A}_0$.
\end{proof}

\begin{proposition}[Triangles in $\delta M$]\label{proposition: delta M has triangle}
Suppose that $M$ is a connected matroid with at least two circuits. Then every element of $\delta M$ is contained a triangle, that is, a circuit of size $3$.
\end{proposition}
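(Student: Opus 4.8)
The plan is to exhibit, for an arbitrary circuit $C\in\mathcal{C}(M)$, a triangle of $\delta M$ that already appears at depth $0$, that is, a three-element set $\{C,C',C''\}\subseteq\mathcal{C}$ lying in $\mathcal{A}_0$. By \eqref{eq:A0 definition} this means I want three distinct circuits of $M$ with $n(C\cup C'\cup C'')\le 2$. Distinct circuits already satisfy $n(C_1\cup C_2)\ge 2$ (this is exactly the supermodularity computation in the proof of Lemma~\ref{lemma: delta M is simple}), so by monotonicity of $n$ any such triple in fact has $n(C\cup C'\cup C'')=2$, and conversely such a triple is dependent in $\delta M$ since its size $3$ exceeds its nullity. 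It is moreover automatically a \emph{minimal} dependent set, since all of its proper subsets have size at most $2$ and are independent in $\delta M$ by Lemma~\ref{lemma: delta M is simple}. Hence it suffices to produce two further circuits $C',C''$ with $\supp\{C,C',C''\}$ of nullity $2$.

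To control the nullity I would pass to fundamental circuits. Fix $e\in C$ and extend $C\setminus\{e\}$ to a basis $B$ of $M$; then $C$ is the fundamental circuit $C_{eB}$ (Definition~\ref{def:fund_circuits}). For any other nonbasis element $f$, the union $C_{eB}\cup C_{fB}$ lies in $B\cup\{e,f\}$, a set of nullity $2$, so combined with the lower bound $n\ge 2$ for distinct circuits we get $n(C\cup C_{fB})=2$ for free. If in addition $C$ and $C_{fB}$ share an element $g$, then circuit elimination (C3) yields a circuit $C''\subseteq(C\cup C_{fB})\setminus\{g\}$; since $g$ lies in both $C$ and $C_{fB}$ but not in $C''$, the circuits $C,C_{fB},C''$ are pairwise distinct, while $\supp\{C,C_{fB},C''\}=C\cup C_{fB}$ still has nullity $2$. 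Thus $\{C,C_{fB},C''\}$ is the desired triangle, and it contains $C$.

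The crux is therefore to find a nonbasis $f\ne e$ whose fundamental circuit meets $C$; equivalently, choosing $g\in C\cap B$ (which is nonempty, as connectedness forbids loops and forces $|C|\ge 2$), I need some $f\in C^{*}_{gB}\setminus\{g,e\}$, where $C^{*}_{gB}$ is the fundamental cocircuit of $g$. This is where both hypotheses enter. Suppose no such $f$ existed for any $g\in C\cap B$. Since $g\in C=C_{eB}$ forces $e\in C^{*}_{gB}$, this would give $C^{*}_{gB}=\{e,g\}$ for every $g\in C\cap B$, i.e.\ every element of $C$ is in series with $e$; as series equivalence is transitive, all of $C$ would lie in a single series class, so every circuit meeting $C$ would contain all of $C$. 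Picking $f'\in E\setminus C$ (possible because having at least two circuits forces $C\ne E$) and a circuit $D\ni e,f'$ supplied by connectedness, we would obtain $C\subsetneq D$, contradicting that circuits form an antichain (C2). Hence a suitable $f$ exists and the construction closes.

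I expect the main obstacle to be exactly this last step: securing an overlap between $C$ and a second fundamental circuit while keeping the nullity of their union equal to $2$. Fundamental circuits deliver the nullity bound effortlessly but give no handle on intersections, whereas a circuit through $e$ produced straight from connectedness need not have a nullity-$2$ union with $C$. The fundamental-cocircuit (series-class) argument above is what reconciles these two competing demands, and verifying that the series-class collapse genuinely contradicts connectedness is the delicate point to get right.
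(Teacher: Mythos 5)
Your proof is correct, but it takes a genuinely different route from the paper's. The paper starts from a second circuit $D$ meeting $C$, gets $n(C\cup D)\ge 2$ from supermodularity, and then passes to an inclusion-minimal subset $F\subseteq C\cup D$ of nullity $2$ in order to extract a circuit $C'\ne C$ meeting $C$ with $n(C\cup C')\le 2$; circuit elimination then produces the third circuit exactly as you do. You instead anchor everything at a basis $B$ extending $C\setminus\{e\}$, so that the upper bound $n(C\cup C_{fB})\le n(B\cup\{e,f\})=2$ is automatic for \emph{any} second fundamental circuit, and you shift all the work to producing an $f$ with $C_{fB}\cap C\ne\emptyset$. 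Your existence argument via fundamental cocircuits is sound: if no such $f$ existed, each $g\in C\cap B$ would lie in the $2$-element cocircuit $\{e,g\}$, circuit--cocircuit orthogonality would force every circuit meeting $C$ to contain $C$, and a circuit through $e$ and a point outside $C$ (connectedness, plus $C\ne E$ because $M$ has two circuits) would violate (C2); the degenerate case $E\setminus B=\{e\}$ is swept up by the same contradiction. Your triangle construction is essentially the one the paper itself reuses in Proposition~\ref{proposition: all fundamental circuits in the same component} for intersecting fundamental circuits. What your version buys is a completely explicit nullity bound --- in particular you never need the inclusion $C\subseteq F$ that the paper's inequality $n(C\cup C')\le n(F)$ implicitly relies on --- and a transparent accounting of where connectedness and the two-circuit hypothesis enter; what it costs is an appeal to cocircuit duality and series classes, which the paper's shorter argument avoids.
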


\begin{proof}
Let $C$ and $D$ be two circuits of $M$ with $C\cap D\neq\emptyset$. Then we have
\begin{align*}
n(C\cup D) 
 &=n(C\cup D)+n(C\cap D) \\
 &\geq n(C)+n(D) \\
 &=1+1=2.
\end{align*}
Let $F\subseteq C\cup D$ be inclusion minimal with $n(F)=2$. Then $F$ contains a circuit $C'$ other than $C$. Since $$C'\setminus C\subseteq F\setminus C \subseteq D\setminus C$$ is independent, we see that $C\cap C'\neq\emptyset$.
By the circuit exchange axiom (C3) there now exists a circuit $C''\subseteq C\cup C'$ in $M$. Let $A=\{C,C',C''\}$. Then we have
\[ n(\supp(A))=n(C\cup C')\leq n(F)\leq2<3=|A|, \]
hence $A\in\mathcal{A}_0$. In fact, $A$ is a circuit of $\delta M$, since by Lemma \ref{lemma: delta M is simple} there are no dependent sets of size $1$ or $2$ in $\delta M$.
\end{proof}

Indeed, the circuits $C, C'$ as constructed are the first two circuits in an \emph{ear decomposition} of $M$, as in \cite{coullard}. We only construct them explicitly for completeness.

\begin{remark}
By Lemma \ref{lemma: derived circuits}, every circuit in $\delta M$ can be written as $A_1\cup A_2\backslash\{C\}$ for some other circuits $A_1, A_2$ with $C\in A_1\cap A_2$.  Proposition \ref{proposition: delta M has triangle} shows that the converse of this statement does not hold: if $N$ is a matroid such that every circuit can be written as $C=(C_1 \cup C_2)\backslash \{e\}$, there does not necessarily exists some $M$ such that $\delta M=N$. A counterexample is given by letting $N$ be a uniform matroid of rank at least three: we can clearly write every circuit as $C=(C_1 \cup C_2)\backslash \{e\}$, but since $N$ does not contain a triangle, it can not be a combinatorial derived matroid.
\end{remark}

\section{Examples}\label{sec: examples}

In this section we will give several examples of combinatorial derived matroids and compare them to earlier definitions. 

\begin{example}
Consider the matroid $Q_6$ from~\cite[Example 7.2.4]{oxley2011matroid} whose geometric representation is given in Figure~\ref{fig:matroid Q6}.

\begin{figure}[!ht]
\centering
\begin{tikzpicture}[scale=.6]
\coordinate (1) at (4,4);
\coordinate (2) at (2,3);
\coordinate (3) at (0,2);
\coordinate (4) at (2,1);
\coordinate (5) at (4,0);
\coordinate (6) at (5,2);
\draw (1) -- (3) -- (5);
\draw[fill=black] (1) circle (.15);
\draw[fill=black] (2) circle (.15);
\draw[fill=black] (3) circle (.15);
\draw[fill=black] (4) circle (.15);
\draw[fill=black] (5) circle (.15);
\draw[fill=black] (6) circle (.15);
\node[above=2pt] at (1) {$1$};
\node[above=2pt] at (2) {$2$};
\node[left=2pt] at (3) {$3$};
\node[below=2pt] at (4) {$4$};
\node[below=2pt] at (5) {$5$};
\node[right=2pt] at (6) {$6$};
\end{tikzpicture}
    \caption{Geometric representation of matroid $Q_6$~ \cite[Figure 7.9]{oxley2011matroid}}
    \label{fig:matroid Q6}
\end{figure}
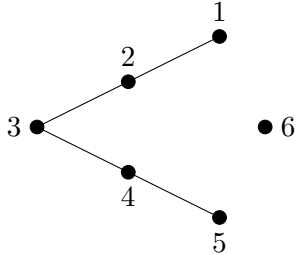
This matroid contains $11$ circuits: $123$, $345$ and all $4$-sets not containing the lines, e.g., $1245$. 

Let us construct $\mathcal{A}_0$. Direct computation shows that $|\supp(A)|\geq 5$ for all $A \in \mathcal{A}_0$. Hence, applying Lemma~\ref{lem: cardinality nondecreasing} we know that $|\supp(A)|\geq 5$ for all $A \in \mathcal{A}$. 

We will show how to construct the circuits of $\delta M$ with support of cardinality $5$ by fixing the excluded element of the support. First, fix the element $1$. Then any set $A \subseteq\{345, 2346, 2356, 2456\}$ such that $|A|\geq 3$ is contained in $\mathcal{A}_0$. Similar construction applies to $\{2,4,5\}$. Now fix $3$. Any set $A \subseteq\{1245, 1246, 1256, 1456, 2456\}$ such that $|A|\geq 3$ is contained in $\mathcal{A}_0$. Finally, there is one element of $\mathcal{A}_0$ that excludes $6$: $A=\{123,345,1245\}$.

\end{example}
\subsection{Uniform matroids}
\begin{example}\label{example: U(3,6)}
Let $M=U(3,6)$ be the rank $3$ uniform matroid on the ground set $[6]$. This is a representable matroid whose representations are $[6,3]$ MDS codes. 

 We give two representations $Q_1$ and $Q_2$ of $M$ over $\mathbb{F}_7$ below using generator matrices for the dual linear codes:
	\[G_{Q_1^\perp}=
	\begin{bmatrix}
	1 & 2 & 1 & 5 & 0 & 0\\
	1 & 5 & 0 & 0 & 5 & 1\\
	0 & 0 & 5 & 1 & 2 & 1
	\end{bmatrix}
	\]
	and 
	\[G_{Q_2^\perp}=
	\begin{bmatrix}
	1 & 1 & 1 & 1 & 0 & 0\\
	0 & 0 & 1 & 1 & 1 & 1\\
	0 & 1 & 0 & 1 & 3 & 4
	\end{bmatrix}
	.\]
Since this matroid is representable, we can apply the definition of derived matroids from~\cite{oxley2019dependencies}. The generator matrices of the derived matroids corresponding to $G_{Q_1^\perp}$ and $G_{Q_2^\perp}$ are:
\[G_{\delta M(Q_1)}=
	\begin{bmatrix}
	1 & 0 & 0 & 1 & 4 & 1 & 0 & 1 & 1 & 2 & 0 & 5 & 3 & 2 & 0\\
	2 & 0 & 4 & 5 & 0 & 0 & 4 & 1 & 6 & 0 & 4 & 6 & 2 & 0 & 4\\
	1 & 5 & 0 & 0 & 1 & 0 & 4 & 3 & 0 & 3 & 6 & 2 & 0 & 6 & 3\\
	5 & 1 & 2 & 0 & 0 & 1 & 0 & 0 & 4 & 4 & 6 & 0 & 4 & 6 & 4\\
	0 & 2 & 3 & 5 & 4 & 3 & 6 & 6 & 4 & 3 & 4 & 0 & 0 & 0 & 0\\
	0 & 1 & 3 & 1 & 6 & 6 & 1 & 0 & 0 & 0 & 0 & 2 & 2 & 2 & 5
	\end{bmatrix}
	\]
and
	\[G_{\delta M(Q_2)}=
	\begin{bmatrix}
    1 & 0 & 0 & 1 & 1 & 1 & 0 & 3 & 4 & 1 & 0 & 2 & 3 & 1 & 0\\
	1 & 0 & 1 & 1 & 0 & 0 & 1 & 4 & 5 & 0 & 1 & 3 & 4 & 0 & 6\\
	1 & 1 & 0 & 0 & 1 & 0 & 6 & 6 & 0 & 5 & 3 & 6 & 0 & 4 & 3\\
	1 & 1 & 1 & 0 & 0 & 6 & 0 & 0 & 1 & 4 & 4 & 0 & 1 & 3 & 2\\
	0 & 1 & 3 & 6 & 4 & 3 & 2 & 6 & 6 & 1 & 6 & 0 & 0 & 0 & 0\\
	0 & 1 & 4 & 6 & 3 & 2 & 3 & 0 & 0 & 0 & 0 & 1 & 1 & 6 & 6
	\end{bmatrix}
	.\]

Direct computation in \verb|Macaulay2| shows that both derived matroids have $751$ circuits, of which $712$ belong to the intersection of the two matroids. Let $\delta \mathcal{C}_i$ denote the set of circuits in the corresponding derived matroid. We want to study $\delta \mathcal{C}_1\triangle \delta \mathcal{C}_2$. It follows that $|\delta \mathcal{C}_i-\delta \mathcal{C}_j|=39$. Further computation shows that in both cases $|\delta \mathcal{C}_i-\delta \mathcal{C}_j|$ contains $3$ sets of cardinality $3$ and $36$ sets of cardinality $4$. The sets of cardinality $3$ are independent in the other derived matroid, whereas the sets of cardinality $4$ are dependent sets but not minimal in the other matroid. Specifically, the sets that are circuits in $\delta M(Q_1)$ but are independent in $\delta M(Q_2)$ are given by the columns
\[\{6,11,12\},\{3,8,14\},\{4,10,15\}\]
and
the sets that are circuits in $\delta M(Q_2)$ but are independent in $\delta M(Q_1)$ are given by the columns
\[\{1,2,4\},\{1,3,5\},\{2,8,13\}.\]
This shows that there is a bijection of sets between the two matroids, but they are not isomorphic, because in $\delta M(Q_2)$, the sets $\{1,2,4\},\{1,3,5\}$ have a non-empty intersection but all circuits of cardinality three in $\delta \mathcal{C}_1\triangle \delta \mathcal{C}_2$ are disjoint. So different representations of the same matroid can give non-isomorphic derived matroids in the sense of~\cite{oxley2019dependencies}.

Let us now compare these two represented derived matroids with the combinatorial derived matroid from Definition~\ref{def:combinatorial derived matroid}. Let $A \in \mathcal{A}_0$. For future record we note that the represented derived matroids $\delta M(Q_1)$ and $\delta M(Q_2)$ have $32256$ dependent sets. If $|A|\leq 2$, then $|A|\leq n(A)$ which contradicts~(\ref{eq:A0 definition}),  so $|A|\geq 3$. Since $M=U(3,6)$, $2 \leq n(A) \leq 3$. Therefore, any set $A \subset \mathcal{C}$ with $|A|\geq 4$ will be an element of $\mathcal{A}_0$. There are
\[\sum_{i=4}^{15}\binom{15}{i}=32192\]
such dependent sets in $\delta M$. If $|A|=3$, then to satisfy~(\ref{eq:A0 definition}) 
the nullity needs to be equal to $2$, so $|\operatorname{supp}(A)|=5$. In this case, there are $\binom{6}{5}=6$ choices of $\operatorname{supp}(A)$ and $\binom{5}{4}=5$ circuits contained in each support. Hence, there are a total of
\[\binom{6}{5}\cdot \binom{5}{3} =60\]
dependent sets $A \in \mathcal{A}_0$ of this type. 
Therefore, there are $32252$ elements in $\mathcal{A}_0$.
The set $\mathcal{A}_0$ is already closed under the operations $\epsilon$ and $\uparrow$ as we will show in Corollary~\ref{corollary: U(n-3,3)}, so $\mathcal{A}=\mathcal{A}_0$. We conclude that both $\delta(Q_1)$ and $\delta(Q_2)$ are not the combinatorial derived matroid, since they don't have the same number of dependent sets.

Next, we will find a representation of $M$ such that the derived and combinatorial derived matroid are the same. By Proposition~\ref{proposition: U(k,n)}, we get that $\delta U(3,6)=(\mathcal{C},\mathcal{A}_0)$, so Corollary~\ref{corollary:NumberOfDep} applies to $U(3,6)$. Thus, it suffices to find a representation $Q$ such that $\delta_{OW}(Q)$ has precisely $32252$ dependent sets, which is efficiently done by randomly constructed matrices over large enough fields.
 
The combinatorial derived matroid is representable over a sufficiently large field extension of $\mathbb{F}_7$. For example, the following is a generator matrix of $U(3,6)$ over $\mathbb{F}_{49}$, where $a$ is a primitive element over $\F_7$:
 \begin{equation}\label{matrix: U(3,6) over F_49}
    G_{Q^\perp}=
	\begin{bmatrix}
	-2& 2a+2& 3a+2& -a+2& -a-3& a+3\\
	3a+2& -3a-1& 2a+3& -1& -2a-1& -a\\
	-a+2 & 3& -2a+1& a& -2a+1& a+3
	\end{bmatrix} 
 \end{equation}
Direct computation confirms that its represented derived matroid has $32252$ dependent sets, and thus indeed equals the combinatorial derived matroid. 

Furthermore, the combinatorial derived matroid of $U(3,6)$ is representable over any field of characteristic $0$. For example, the following integral matrix has the reprentable derived matroid equal to the combinatorial derived matroid:
 \begin{equation}\label{matrix: U(3,6) over ZZ}
    G_{Q^\perp}=
	\begin{bmatrix}
	8& 8& 9& 6& 7& 8\\
	2& 7& 5& 7& 4&8\\
	7& 8& 0& 5& 6& 4
	\end{bmatrix}
 \end{equation}
 Both~(\ref{matrix: U(3,6) over F_49}) and~(\ref{matrix: U(3,6) over ZZ}) were generated as random matrices in the corresponding fields, which coincides with the intuitive interpretation of the combinatorial derived matroid being the most common matroid. 
\end{example}

\begin{proposition}\label{proposition: U(k,n)}
Let $M=U(k,n)$ be a uniform matroid. Then the combinatorial derived matroid $\delta M$ is generated by the set
\begin{equation*}\label{eq:A0 of U(k,n)}
  \mathcal{A}_0:=\{A \subseteq \mathcal{C}: |A|> |\supp(A)|-k\}.
\end{equation*}
Moreover, for all $A\in\mathcal{A}_0$ it holds that $k+2\leq|\supp(A)|\leq n$ and $|A|\geq 3$.
\end{proposition}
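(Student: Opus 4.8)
The plan is to reduce everything to the explicit nullity function of a uniform matroid. For $M=U(k,n)$ the circuits $\mathcal{C}$ are exactly the $(k+1)$-subsets of $[n]$, and the rank function is $r(S)=\min(|S|,k)$, so the nullity is $n(S)=\max(0,|S|-k)$. For any nonempty $A\subseteq\mathcal{C}$ the support $\supp(A)$ contains at least one circuit, hence $|\supp(A)|\geq k+1>k$, and therefore $n(\supp(A))=|\supp(A)|-k$. Substituting this identity into the defining condition $|A|>n(\supp(A))$ of Definition~\ref{def:combinatorial derived matroid sets} immediately rewrites $\mathcal{A}_0$ in the claimed form $\{A\subseteq\mathcal{C}:|A|>|\supp(A)|-k\}$; since $\delta M$ is by construction generated from $\mathcal{A}_0$ via the operations $\epsilon$ and ${\uparrow}$, this proves the first assertion.

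For the bounds on the support, the upper bound $|\supp(A)|\leq n$ is immediate since $\supp(A)\subseteq[n]$. For the lower bound I would argue by contradiction. Let $A\in\mathcal{A}_0$; since $\emptyset\notin\mathcal{A}_0$, the set $A$ is nonempty and $|\supp(A)|\geq k+1$. Suppose $|\supp(A)|=k+1$. Every $C\in A$ satisfies $C\subseteq\supp(A)$ and $|C|=k+1=|\supp(A)|$, which forces $C=\supp(A)$; thus $A$ contains a single circuit and $|A|=1$. But then the defining inequality $|A|>|\supp(A)|-k$ reads $1>1$, a contradiction. Hence $|\supp(A)|\geq k+2$.

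Finally, $|A|\geq 3$ follows directly from Lemma~\ref{lemma: delta M is simple}, which shows that $\delta M$ has no dependent sets of size $1$ or $2$, and in particular none in $\mathcal{A}_0\subseteq\mathcal{A}$. Alternatively one can argue in place: a single circuit gives $|A|=1>1$, which is false, while for two distinct circuits $C_1\neq C_2$ of equal cardinality neither contains the other, so $|C_1\cap C_2|\leq k$ and hence $|\supp(A)|\geq 2(k+1)-k=k+2$, making the defining inequality $2>(k+2)-k=2$ fail. There is no serious obstacle here: the entire proposition is an elementary consequence of the explicit nullity formula, and the only point requiring a little care is the equal-cardinality argument showing that a support of size exactly $k+1$ forces $A$ to consist of a single circuit.
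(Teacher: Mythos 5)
Your proof is correct and follows essentially the same route as the paper's: both reduce to the identity $n(\supp(A))=|\supp(A)|-k$ for nonempty $A\subseteq\mathcal{C}$ and then rule out $|A|\in\{1,2\}$ by the same cardinality counts (your ``alternative in-place argument'' is precisely the paper's). The only cosmetic difference is the order in which the two bounds are derived, which is immaterial.
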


\begin{proof}
Since the collection of circuits of $U(k,n)$ is exactly the collection of sets of cardinality $k+1$, we have  $n(A)=|\supp(A)|-k$ for all $A\subseteq\mathcal{C}$ and thus $\mathcal{A}_0$ is of the described form. If $|A|=1$ then $|\supp(A)|=k+1$ so $A\notin\mathcal{A}_0$. If $|A|=2$ then $|\supp(A)|\geq k+2$ so also $A\notin\mathcal{A}_0$. Therefore $|A|\geq 3$ for all $A\in\mathcal{A}_0$ and this implies $k+2\leq|\supp(A)|\leq n$.
\end{proof}

In other words, $\mathcal{A}_0$ can be constructed by taking any subset $S \subseteq [n]$ such that $|S|\geq k+2$. Any set $A\subseteq \mathcal{C}$ such that $\supp (A)=S$ and $|A|> |S|-k$, will be an element of $\mathcal{A}_0$. In particular, any set $A \subset \mathcal{C}$ with $|A|\geq n-k+1$ will be an element of $\mathcal{A}_0$. 
The lower bound $|A|\geq 3$ from Proposition~\ref{proposition: U(k,n)} is sharp because $\delta U(k,n)$ contains a triangle, as shown in Proposition~\ref{proposition: delta M has triangle}.

We randomly generated matrices $R \in \mathbb{Z}^{k \times n}$. Such $R$ should be sufficiently generic, so with probability 1,  $M(R)=U(k,n)$. Furthermore, we conjecture that $\delta_{OW}M(R)=\delta U(k,n)$. Then in  Table~\ref{tab:circuits of delta U(k,n)} we present the empirical distribution of sizes of circuits of $\delta_{OW}M(R)$, which we expect to be indicative of the sizes of circuits of $\delta U(k,n)$.

\begin{table}[!ht]
\centering
{
\begin{tabular}{lllll}
        & \multicolumn{4}{l}{\textbf{size of circuits of $\delta M$}} \\
\textbf{Matroid }&\textbf{ 3 }            & \textbf{4 }             & \textbf{5}                & \textbf{6}               \\
$U(2,6)$  & 60            & 510            & 3432             &                 \\
$U(2,7)$  & 140           & 1785           & 24024            & 222600          \\
$U(3,5)$  & 10            &                &                  &                 \\
$U(3,6)$  & 60            & 735            &                  &                 \\
$U(3,7)$  & 210           & 5145           & 127232           &                
\end{tabular}
}
\caption{Number of circuits of given size in  $\delta_{OW}(M(R))$ for a random matrix $R \in \mathbb{Z}^{k \times n}$}
\label{tab:circuits of delta U(k,n)}
\end{table}

\begin{corollary}\label{corollary: U(n-3,3)}
Let $M=U(k,n)$ be a uniform matroid with $k\geq n-3$. Then $\delta M=(\mathcal{C}, \mathcal{A}_0)$. 
\end{corollary}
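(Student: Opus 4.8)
The plan is to show that $\mathcal{A}_0$ is already closed under the two operations that build the sets $\mathcal{A}_i$, so that $\mathcal{A}_1={\uparrow}\epsilon(\mathcal{A}_0)=\mathcal{A}_0$ and hence, by the inductive definition, $\mathcal{A}_i=\mathcal{A}_0$ for all $i$ and $\mathcal{A}=\mathcal{A}_0$. Since the inclusion $\mathcal{A}_0\subseteq{\uparrow}\epsilon(\mathcal{A}_0)$ always holds, it suffices to prove the two statements: (i) $\mathcal{A}_0$ is upward closed in $2^\mathcal{C}$, and (ii) $\epsilon(\mathcal{A}_0)\subseteq\mathcal{A}_0$; together these give ${\uparrow}\epsilon(\mathcal{A}_0)\subseteq{\uparrow}\mathcal{A}_0=\mathcal{A}_0$. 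Throughout I will use the description of $\mathcal{A}_0$ from Proposition~\ref{proposition: U(k,n)} and the decisive numerical consequence of the hypothesis $k\ge n-3$: for every $A\subseteq\mathcal{C}$ we have $n(\supp(A))=|\supp(A)|-k\le n-k\le 3$, while $|\supp(A)|\ge k+2$ as soon as $|A|\ge 2$ (two distinct circuits of size $k+1$ have union of size at least $k+2$).

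For (i), the bound $n(\supp(A))\le 3$ means that every $A$ with $|A|\ge 4$ automatically satisfies $|A|>n(\supp(A))$ and hence lies in $\mathcal{A}_0$; consequently the only members of $\mathcal{A}_0$ of size at most $3$ are sets of size exactly $3$, and for these $|\supp(A)|\ge k+2$ forces $n(\supp(A))=2$. Upward closure is then immediate: enlarging a member $A\in\mathcal{A}_0$ either keeps $|A|=3$, forcing the larger set to equal $A$, or produces a set of size $\ge 4$, which lies in $\mathcal{A}_0$ by the previous sentence.

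For (ii), consider a ``new'' set $B=(A_1\cup A_2)\setminus\{C\}$ produced by $\epsilon$, where $A_1,A_2\in\mathcal{A}_0$, $A_1\cap A_2\notin\mathcal{A}_0$, and $C\in A_1\cap A_2$. As in the proof of Lemma~\ref{lem: cardinality nondecreasing}, the condition $A_1\cap A_2\notin\mathcal{A}_0$ forces $A_1\not\subseteq A_2$ and $A_2\not\subseteq A_1$, whence $|A_1\cup A_2|\ge\max(|A_1|,|A_2|)+1\ge 4$ and $|B|\ge 3$. If $|B|\ge 4$ then $B\in\mathcal{A}_0$ by the observation in (i), so it remains to treat $|B|=3$. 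This case is vacuous or immediate unless $k=n-3$: for $k\ge n-1$ one has $\mathcal{A}_0=\emptyset$, and for $k=n-2$ one computes $\mathcal{A}_0=\{A\subseteq\mathcal{C}:|A|\ge 3\}$, so any $B$ with $|B|\ge 3$ already lies in $\mathcal{A}_0$. Assume therefore $k=n-3$ and $|B|=3$; then $|A_1\cup A_2|=4$, and combining the non-containments with inclusion--exclusion forces $|A_1|=|A_2|=3$ and $|A_1\cap A_2|=2$.

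The heart of the argument, and the main obstacle, is to rule out $n(\supp(B))=3$ and to show instead $n(\supp(B))=2$, i.e. that the three circuits of $B$ share a common missing element. The key point is geometric: for $k=n-3$ a size-$3$ member of $\mathcal{A}_0$ has support of size exactly $k+2=n-1$, so $\supp(A_1)=[n]\setminus\{x\}$ and $\supp(A_2)=[n]\setminus\{y\}$ for single points $x,y$, and every circuit in $A_i$ avoids the corresponding point. A circuit avoiding two distinct points $x\ne y$ would have to equal the unique circuit $[n]\setminus\{x,y\}$; since $|A_1\cap A_2|=2$ provides two distinct common circuits, we must have $x=y$. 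Then every circuit of $A_1\cup A_2$, and hence of $B$, avoids $x$, so $\supp(B)\subseteq[n]\setminus\{x\}$ and $n(\supp(B))=2<3=|B|$, giving $B\in\mathcal{A}_0$. This establishes (ii) and, with (i), completes the proof; the degenerate cases $k\in\{n-2,n-1,n\}$ are handled along the way since there $\mathcal{A}_0$ is either $\{A:|A|\ge 3\}$ or $\emptyset$, both trivially closed.
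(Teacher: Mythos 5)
Your proposal is correct and follows essentially the same route as the paper: both proofs show that $\mathcal{A}_0$ is closed under ${\uparrow}$ and $\epsilon$, dispose of $k\ge n-2$ as degenerate, reduce the $\epsilon$-closure to the case $|A_1|=|A_2|=3$, $|A_1\cup A_2|=4$ with $k=n-3$, and then argue that the two circuits in $A_1\cap A_2$ force $\supp(A_1)=\supp(A_2)$ so that the new $3$-set still has support of size $n-1$. Your phrasing of that last step via the uniqueness of the circuit $[n]\setminus\{x,y\}$ is just a restatement of the paper's observation that two distinct circuits already cover $n-1$ elements.
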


\begin{proof}
We will show that $\mathcal{A}_0$ as defined in Proposition \ref{proposition: U(k,n)} is invariant under the operations from Definition \ref{def:Operations}. The case $k\geq n-2$ is immediate, so assume $k=n-3$. 

Let $A_1 \in \mathcal{A}_0$ and $A_1\subseteq A_2 \subseteq \mathcal{C}$. If $\supp (A_2)= \supp (A_1)$, then $A_2 \in \mathcal{A}_0$ because $|A_2|\geq |A_1|> |\supp (A_1)|-(n-3)$. If $\supp (A_2)\supsetneq \supp (A_1)$, then 
\[|A_2|\geq |A_1|+1>|\supp(A_1)|+1-(n-3) \geq |\supp(A_2)|-(n-3)\]
and so $A_2\in \mathcal{A}_0$. This implies $\mathcal{A}_0={\uparrow}\mathcal{A}_0$.

Now take $A_1, A_2 \in \mathcal{A}_0$ such that $A_1 \cap A_2 \neq \emptyset$.
If $|A_1 \cup A_2|\geq 5$, then $|(A_1 \cup A_2) \backslash \{C\}|\geq 4$ for all $C \in A_1\cap A_2$ and so $(A_1 \cup A_2) \backslash \{C\} \in \mathcal{A}_0$. Assume now $|A_1|=|A_2|=3$ and $|A_1 \cup A_2|=4$. Then $|A_1 \cap A_2|=2$, so $A_1 \cap A_2 \notin \mathcal{A}_0$.

Any $A\in\mathcal{A}_0$ with $|A|=3$ has $|\supp(A)|=n-1$ so for any $C \in A$, $\supp(A)-\{C\}=\supp(A)$. Since $|A_1 \cap A_2|=2$ it follows that $\supp(A_1)=\supp(A_1 \cap A_2)=\supp(A_2)$ and hence $\supp((A_1\cup A_2)\backslash \{C\})=n-1$. This shows that derived dependent sets of cardinality 3 supported on $n$ elements of the ground set of $U(n-3,n)$ cannot be constructed via the operation $\epsilon$. So $\epsilon({\uparrow}\mathcal{A}_0)=\mathcal{A}_0$.

We conclude that $\mathcal{A}=\mathcal{A}_0$.
\end{proof}

The bound from Corollary~\ref{corollary: U(n-3,3)} is sharp. To see this, take $U(n-4,n)$. Let $A_1 \in \mathcal{A}_0$ and $A_1\subseteq A_2 \subseteq \mathcal{C}$. If $\supp (A_2)\supsetneq \supp (A_1)$, it is possible that~(\ref{eq:A0 definition}) is no longer satisfied, for example, if $|A_1|= |\supp (A_1)|-n+5$, $|A_2|= |A_1|+1$ but $|\supp (A_2)|\geq |\supp (A_1)|+2$. Hence, $\mathcal{A}_1\supsetneq \mathcal{A}_0$.

\subsection{Graphical matroids}
We will next give an example of the derived matroid of the graphical matroid $M(K_4)$. We believe that this may be an illuminating example for future studies of graphical matroids in general, and perhaps even more generally of binary matroids. A key feature of the example is that $\delta M(K_4)$ turns out to have strictly fewer dependent sets than the Longyear derived matroid of $M(K_4)$. This foreshadows some of our conjectures in Section 8. 

\begin{example}\label{ex:K4}
Let $M=M(K_4)$ be the graphical matroid with ground set $E=\{ab, ac, ad, bc, bd, cd\}$, labelled $1,\dots 6$ as in Figure \ref{fig:K4}, and circuits  $$\mathcal{C}=\{124, 135, 236, 456, 1346, 1256, 2345\}.$$

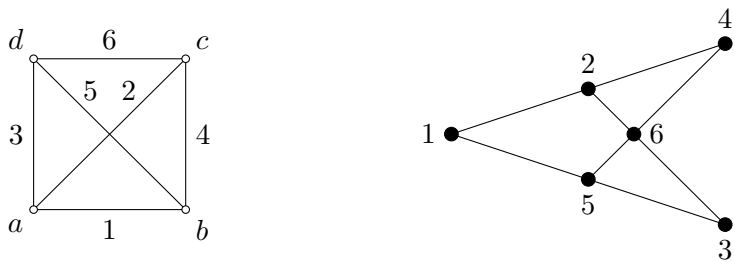
\begin{figure}[!ht]
\centering
\raisebox{-0.5\height}{ 
\begin{tikzpicture}[scale=.5]
\coordinate (a) at (0,0);
\coordinate (b) at (4,0);
\coordinate (c) at (4,4);
\coordinate (d) at (0,4);
\draw (a) -- node[below] {1} (b);
\draw (a) -- ++(1,1) -- node[above=2pt] {2} (c);
\draw (a) -- node[left] {3} (d);
\draw (b) -- node[right] {4} (c);
\draw (b) -- ++(-1,1) -- node[above=2pt] {5} (d);
\draw (c) -- node[above] {6} (d);
\draw[fill=white] (a) circle (.1);
\draw[fill=white] (b) circle (.1);
\draw[fill=white] (c) circle (.1);
\draw[fill=white] (d) circle (.1);
\node[anchor=north east] at (a) {$a$};
\node[anchor=north west] at (b) {$b$};
\node[anchor=south west] at (c) {$c$};
\node[anchor=south east] at (d) {$d$};
\end{tikzpicture} }
\hspace{2cm}
\raisebox{-0.5\height}{
\begin{tikzpicture}[scale=.6]
\coordinate (1) at (0,2);
\coordinate (2) at (3,3);
\coordinate (3) at (6,0);
\coordinate (4) at (6,4);
\coordinate (5) at (3,1);
\coordinate (6) at (4,2);
\draw (1) -- (4) -- (5);
\draw (1) -- (3) -- (2);
\draw[fill=black] (1) circle (.15);
\draw[fill=black] (2) circle (.15);
\draw[fill=black] (3) circle (.15);
\draw[fill=black] (4) circle (.15);
\draw[fill=black] (5) circle (.15);
\draw[fill=black] (6) circle (.15);
\node[left=2pt] at (1) {$1$};
\node[above=2pt] at (2) {$2$};
\node[below=2pt] at (3) {$3$};
\node[above=2pt] at (4) {$4$};
\node[below=2pt] at (5) {$5$};
\node[right=2pt] at (6) {$6$};
\end{tikzpicture} }
\caption{The graph $K_4$ (left) and the matroid $M(K_4)$ (right).}
\label{fig:K4}
\end{figure}

Note that these circuits correspond to the cycles $abc$, $abd$, $acd$, $bcd$, $abcd$, $abdc$, $acbd$ in the graph, respectively. Now $n(M)=3$ and $n(S)\leq 2$ for all $S\subsetneq E$. Hence, we have that $$\mathcal{A}_0=\{A\subseteq\mathcal{C}: |A|\geq 4\}\cup\{A\subseteq\mathcal{C}: |A|=3 \mbox{ and }\supp A\neq E\}.$$ We see that the three element sets in $\mathcal{A}_0$ are \begin{align*}\{124,135,2345\}, \, \{124,236,1346\}, \, \{124,456,1256\}, \\
\{135,236,1256\}, \, \{135,456,1346\}, \, \{236,456,2345\},&\end{align*} corresponding to the six ways to write a four-cycle in $K_4$ as the symmetric difference of two triangles. But these six three element lines form the non-trivial circuits of a matroid on ground set $\mathcal{C}$, namely the {\em non-Fano matroid $F_7^-$}~\cite[Example 1.5.13]{oxley2011matroid}, depicted in Figure \ref{fig:nonFano}. Thus we have $\mathcal{A}=\epsilon\mathcal{A}_0=\mathcal{A}_0$, so $\delta(M(K_4))$ is itself the non-Fano matroid. 
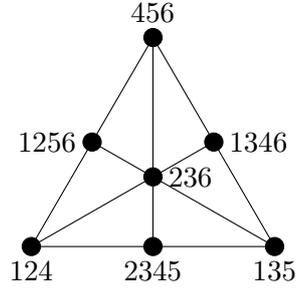
\begin{figure}
\centering
\begin{tikzpicture}[scale=.8]
\coordinate (1) at (0,0);
\coordinate (2) at (2,0);
\coordinate (3) at (4,0);
\coordinate (4) at (3,{sqrt(3)});
\coordinate (5) at (2,{2*sqrt(3)});
\coordinate (6) at (1,{sqrt(3)});
\coordinate (7) at (2, {2/sqrt(3)});
\draw (1) -- (3) -- (5) -- cycle;
\draw (1) -- (7) -- (4);
\draw (2) -- (5);
\draw (3) -- (6);
\draw[fill=black] (1) circle (.15);
\draw[fill=black] (2) circle (.15);
\draw[fill=black] (3) circle (.15);
\draw[fill=black] (4) circle (.15);
\draw[fill=black] (5) circle (.15);
\draw[fill=black] (6) circle (.15);
\draw[fill=black] (7) circle (.15);
\node[below=2pt] at (1) {$124$};
\node[below=2pt] at (2) {$2345$};
\node[below=2pt] at (3) {$135$};
\node[right=2pt] at (4) {$1346$};
\node[above=2pt] at (5) {$456$};
\node[left=2pt] at (6) {$1256$};
\node[right=2pt] at (7) {$236$};
\end{tikzpicture}
\caption{The non-Fano matroid $F_7^-$, labelled via the isomorphism to $\delta M$.}
\label{fig:nonFano}
\end{figure}

\end{example}

\subsection{Non-representable matroids}

In this section we consider the V\'{a}mos matroid. It is not representable, so its derived matroid in the sense of Oxley--Wang does not exist. Also, Cheung~\cite{cheung} proved that the V\'{a}mos matroid does not have an adjoint. Its combinatorial derived matroid, however, does exist. In our analysis we will particularly focus on studying the sets $$\Delta\mathcal{A}_{i}:=\epsilon\mathcal{A}_{i}-\mathcal{A}_{i}.$$

\begin{example}[V\'{a}mos matroid]\label{example: Vamos matroid}
Let $M$ be the V\'{a}mos matroid. We summarise our finding here and the detailed computations are located in Appendix~\ref{appendix: vamos}. V\'{a}mos matroid is a non-representable and non-algebraic matroid of rank $4$ on the ground set $[8]$. All sets of three or fewer elements are independent and among the $70$ sets of four elements only the $5$ depicted as gray rectangles in Figure~\ref{fig:Vamos matroid} are circuits. 

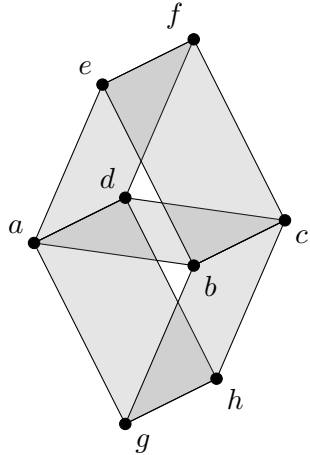
\begin{figure}[!ht]
\centering
\begin{tikzpicture}[scale=.3]
\coordinate (a) at (0,8);
\coordinate (b) at (7,7);
\coordinate (c) at (11,9);
\coordinate (d) at (4,10);
\coordinate (e) at (3,15);
\coordinate (f) at (7,17);
\coordinate (g) at (4,0);
\coordinate (h) at (8,2);
\filldraw[fill=gray, fill opacity=.2] (a) -- (b) -- (c) -- (d) -- cycle;
\filldraw[fill=gray, fill opacity=.2] (a) -- (d) -- (f) -- (e) -- cycle;
\filldraw[fill=gray, fill opacity=.2] (a) -- (d) -- (h) -- (g) -- cycle;
\filldraw[fill=gray, fill opacity=.2] (b) -- (c) -- (f) -- (e) -- cycle;
\filldraw[fill=gray, fill opacity=.2] (b) -- (c) -- (h) -- (g) -- cycle;
\draw[fill=black] (a) circle (.25);
\draw[fill=black] (b) circle (.25);
\draw[fill=black] (c) circle (.25);
\draw[fill=black] (d) circle (.25);
\draw[fill=black] (e) circle (.25);
\draw[fill=black] (f) circle (.25);
\draw[fill=black] (g) circle (.25);
\draw[fill=black] (h) circle (.25);
\node[anchor=south east] at (a) {$a$};
\node[anchor=north west] at (b) {$b$};
\node[anchor=north west] at (c) {$c$};
\node[anchor=south east] at (d) {$d$};
\node[anchor=south east] at (e) {$e$};
\node[anchor=south east] at (f) {$f$};
\node[anchor=north west] at (g) {$g$};
\node[anchor=north west] at (h) {$h$};
\end{tikzpicture}
\caption{V\'{a}mos matroid. The gray rectangles are the only four element circuits.}
\label{fig:Vamos matroid}
\end{figure}

The ground set of the derived matroid $\delta M$ has 41 elements. There are 290 3-sets and 14656 minimal 4-sets in $\mathcal{A}_0$. The former have support of cardinality $6$ and the latter have support of cardinality $7$. All sets of cardinality at least 5 are included in $\mathcal{A}_{0}$. 

Let us consider $\Delta\mathcal{A}_{0}$. It does not contain any $3$-sets. This is because if $|A|=3$ and $A \in \mathcal{A}_1-\mathcal{A}_0$, then $A=A_1 \cup A_2 \backslash \{C\}$ with $C \in A_1 \cap A_2$, $|A_1 \cap A_2|=2$, $A_1, A_2$ in $\mathcal{A}_0$ and $|A_1|=|A_2|=3$. For any distinct $C, C'$ in $\mathcal{C}$, $|C \cup C'|\geq6$. On the other hand, $|\supp A_1|=|\supp A_2|=6$, so $\supp A_1=\supp (A_1 \cap A_2)=\supp A_2=\supp (A_1 \cup A_2)$. Hence,
\[6 \leq |\supp A|\leq |\supp A_1 \cup A_2| =6,\]
that is, $A$ is a $3$-set supported on $6$ elements of $E$, so $A \in \mathcal{A}_0$. 
Let
\[\Delta\mathcal{A}_{03}:=\{A \in \Delta\mathcal{A}_{0}: A=A_1\cup A_2 \backslash \{C\}\text{ with } |A|=4,\ |A_1|=|A_2|=3\}.\]
It is shown in Appendix~\ref{appendix: vamos} that $\Delta\mathcal{A}_{0}=\Delta\mathcal{A}_{03}$,
so the only new elements generated with $\epsilon(\mathcal{A}_0)$ come from combinations of 3-sets. This is a consequence of~\eqref{eq:A0 definition} and does not propagate to subsequent iterations. For example, there exist new 4-sets in $\Delta \mathcal{A}_1$ that are generated by $A_1 \in \mathcal{A}_1$ and $A_2 \in \Delta \mathcal{A}_0$ with $3 \leq |A_1|\leq 4$ and $|A_2|=4$. The large number of 4-sets makes it computationally infeasible to directly check whether all of them are dependent.

We will return to the V\'{a}mos matroid in Example~\ref{example: Vamos hanging}, where we will find circuits in $\delta(M)$ that fail to satisfy some conditions that hold for circuits in Oxley-Wang derived matroids.
\end{example}

\section{Connectedness}\label{sec: connectedness}

As with many other notions about matroids, taking the combinatorial derived matroid commutes with taking the direct sum. That is: the combinatorial derived matroid of a direct sum of two matroids is the direct sum of the combinatorial derived matroid of these matroids. The proofs of the two results below are inspired by the proofs of Proposition 17 and Theorem 18 of \cite{oxley2019dependencies}.

\begin{proposition}
Let $M=M_1\oplus M_2$. Then $\delta(M)=\delta(M_1)\oplus \delta(M_2)$.
\end{proposition}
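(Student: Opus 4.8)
The plan is to show both inclusions by analyzing how the circuit set $\mathcal{C}(M)$ decomposes when $M=M_1\oplus M_2$. First I would recall the fundamental fact that the circuits of a direct sum are exactly the circuits of the two summands: $\mathcal{C}(M)=\mathcal{C}(M_1)\sqcup\mathcal{C}(M_2)$, since every circuit of $M$ lies entirely within the ground set of $M_1$ or entirely within that of $M_2$ (a circuit is connected, and there are no dependencies spanning the two parts). This gives the required partition of the ground set of $\delta M$ into the ground sets of $\delta M_1$ and $\delta M_2$. Crucially, I would also record that nullity is additive across the summands: for $A\subseteq\mathcal{C}(M)$, writing $A=A_1\sqcup A_2$ with $A_j\subseteq\mathcal{C}(M_j)$, the supports $\supp(A_1)$ and $\supp(A_2)$ live in disjoint ground sets, so $n(\supp(A))=n(\supp(A_1))+n(\supp(A_2))$.

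Next I would establish the base case at the level of $\mathcal{A}_0$. The direct-sum decomposition of dependent sets requires showing that $A=A_1\sqcup A_2$ is dependent in $\delta M$ if and only if $A_1$ or $A_2$ is dependent in the respective derived matroid. Using $|A|=|A_1|+|A_2|$ and the additivity of nullity, the defining inequality $|A|>n(\supp(A))$ becomes $|A_1|+|A_2|>n(\supp(A_1))+n(\supp(A_2))$. I would argue that this holds precisely when $|A_1|>n(\supp(A_1))$ or $|A_2|>n(\supp(A_2))$: the ``if'' direction is immediate since each summand of one side already dominates its counterpart when the other is tight (using that $|A_j|\geq n(\supp(A_j))$ always, which follows from $n$ being the nullity and $A_j$ having support whose nullity is at most $|A_j|$ — indeed each circuit contributes nullity at least $1$ to its own support but circuits can overlap, so I must verify the elementary bound $n(\supp(A_j))\le |A_j|$ carefully, or instead argue directly at the level of minimal sets $\mathcal{B}_0$). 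This identifies $(\mathcal{A}_0)_M$ with the direct-sum dependent sets generated by $(\mathcal{A}_0)_{M_1}$ and $(\mathcal{A}_0)_{M_2}$.

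Then I would push the decomposition through the inductive construction of $\mathcal{A}$ via the operation $\epsilon$ and the up-closure ${\uparrow}$. The key observation is that $\epsilon$ cannot create sets that ``mix'' the two parts: if $A_1,A_2\in\mathcal{A}_i$ are decomposed according to the summands and $A_1\cap A_2\notin\mathcal{A}_i$ with $C\in A_1\cap A_2$, then $C$ lies in one summand, say $\mathcal{C}(M_1)$, and the resulting set $(A_1\cup A_2)\setminus\{C\}$ decomposes compatibly, with the exchange happening entirely within the $M_1$-part while the $M_2$-parts simply union. I would make this precise by showing inductively that $A\in\mathcal{A}_i$ (in $\delta M$) if and only if its $M_1$-part or its $M_2$-part is dependent at depth $\le i$ in the respective derived matroid — matching exactly the dependent sets of $\delta M_1\oplus\delta M_2$. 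The direct-sum structure of dependent sets (a set is dependent iff it contains a dependent set confined to one summand) should be preserved at each step because $\epsilon$ acts separately on each block and ${\uparrow}$ is the up-closure, which respects the partition.

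The main obstacle I anticipate is the inductive step controlling $\epsilon$: I must ensure that a cross-part application of the exchange axiom never produces a genuinely new ``mixed'' minimal dependent set outside $\mathcal{A}(\delta M_1)\oplus\mathcal{A}(\delta M_2)$. The delicate point is verifying that when $A_1,A_2$ already respect the decomposition, the hypothesis $A_1\cap A_2\notin\mathcal{A}_i$ forces the intersection to be non-dependent in \emph{both} summands simultaneously, so that the exchange genuinely reduces to an exchange within a single summand (with the other block carried along passively). I would handle this by separately tracking the two projections $A\cap\mathcal{C}(M_1)$ and $A\cap\mathcal{C}(M_2)$ through the recursion and invoking Lemma~\ref{lem: cardinality nondecreasing} together with Lemma~\ref{lemma: derived circuits} to reduce to circuits, where the confinement of each circuit to one summand makes the bookkeeping transparent. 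Once the dependent sets are shown to match, the conclusion $\delta M=\delta M_1\oplus\delta M_2$ follows from the characterization of direct sums via their dependent-set families.
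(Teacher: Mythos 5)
Your overall architecture matches the paper's: decompose $\mathcal{C}(M)=\mathcal{C}(M_1)\sqcup\mathcal{C}(M_2)$, handle the base level of the construction, and then argue that $\epsilon$ and ${\uparrow}$ respect the partition. However, your base case rests on a step that fails. The inequality $n(\supp(A_j))\le |A_j|$, which you need for the ``if'' direction of your claimed identification of $\mathcal{A}_0(M)$ with the level-zero dependent sets of $\delta M_1\oplus\delta M_2$, is false: in $U(1,4)$ on $\{d,e,f,g\}$ the pair of disjoint circuits $A_2=\{de,fg\}$ has $n(\supp(A_2))=3>2=|A_2|$. Consequently the identification itself is false. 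Take $M_1=U(1,3)$ on $\{a,b,c\}$ with $A_1=\{ab,ac,bc\}$, so $|A_1|=3>2=n(\supp(A_1))$ and $A_1\in\mathcal{A}_0(M_1)$; with $A_2$ as above, the set $A=A_1\sqcup A_2$ satisfies $|A|=5=n(\supp(A))$, hence $A\notin\mathcal{A}_0(M)$ even though its $M_1$-part is in $\mathcal{A}_0(M_1)$. Of course $A$ is still dependent in $\delta M$ (it enters at the next stage via ${\uparrow}$), but this shows that $\mathcal{A}_0(M)$ is in general a \emph{proper} subset of the direct-sum dependent sets at level zero, so the induction on the full collections $\mathcal{A}_i$ cannot start from your base case as stated.

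The repair is exactly the fallback you mention in passing, and it is what the paper does: work with the minimal sets. There only the ``only if'' direction is needed, and it is true with no bound on nullity: if $A=A_1\sqcup A_2\in\min\mathcal{A}_0(M)$, then $|A_1|+|A_2|>n(\supp(A_1))+n(\supp(A_2))$ forces $|A_i|>n(\supp(A_i))$ for at least one $i$ by contraposition, whence $A_i\in\mathcal{A}_0(M)$ and minimality gives $A=A_i$. The paper then also sidesteps the ``delicate point'' you anticipate in the inductive step: running the recursion of Proposition~\ref{proposition: construction of circuits} on the minimal collections $\mathcal{E}_i$, every set is confined to a single summand, two sets from different summands are disjoint as subsets of $\mathcal{C}(M)$, and $\epsilon$ only creates new sets from pairs with nonempty intersection --- so the blocks can never mix and no case analysis of ``passively carried'' parts is required. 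Your inductive step on the non-minimal $\mathcal{A}_i$ could likely be pushed through with the case analysis you sketch, but it is strictly harder and, as written, it inherits the broken base case.
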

\begin{proof}
The circuits of $M$ are given by $\mathcal{C}(M)=\mathcal{C}(M_1)\cup\mathcal{C}(M_2)$: every circuit of $M$ is either a circuit of $M_1$ or a circuit of $M_2$. This means the ground set of $\delta M$ can be written as the disjoint union of $\mathcal{C}(M_1)$ and $\mathcal{C}(M_2)$.

First we show that $\mathcal{E}_0(M)=\mathcal{E}_0(M_1)\cup\mathcal{E}_0(M_2)$, that is, any set in $\mathcal{E}_0(M)$ consists of circuits of either only $M_1$, or only $M_2$. Let $A\in\mathcal{E}_0$. Then we can write $A=A_1\cup A_2$ with $A_1\subseteq\mathcal{C}(M_1)$, $A_2\subseteq\mathcal{C}(M_2)$ and $A_1\cap A_2=\emptyset$. Since $A\in\mathcal{A}_0$, we have that $|A|>n(A)$ with $n$ the nullity function of $M$. This implies
\begin{align*}
|A_1|+|A_2| &= |A| \\
 & > n(A) \\
 & = n(A_1\cup A_2) \\
 & = n(A_2\cup A_2)+n(A_1\cap A_2) \\
 & \geq n(A_1)+n(A_2).
\end{align*}
Since nullity is bounded by cardinality, $|A_1|+|A_2|>n(A_1)+n(A_2)$ implies that $|A_i|>n(A_i)$ for at least one of $i=1,2$. This means that $A_i\in\mathcal{A}_0(M)$. By minimality of $A$, we get that $A=A_i$.

Let $n_i$ be the nullity function of $M_i$, then $n(A_i)=n_i(A_i)$, so $|A_i|>n_i(A_i)$ and thus $A_i\in\mathcal{A}_0(M_i)$. In fact $A_i$ has to be in $\mathcal{E}_0(M_i)$, because otherwise it would not be minimal in $\mathcal{A}_0(M)$. This proves that $\mathcal{E}_0(M)=\mathcal{E}_0(M_1)\cup\mathcal{E}_0(M_2)$. 

We now construct the circuits of $\delta M$ as in Proposition \ref{proposition: construction of circuits}. The operation $\epsilon$ only constructs new sets from pairs of elements of $\mathcal{E}_j(M)$ that have nonempty intersection, and so for all $j\geq0$,
\[ \epsilon\left(\mathcal{E}_j(M_1)\sqcup \mathcal{E}_j(M_2)\right)=\epsilon(\mathcal{E}_j(M_1))\sqcup\epsilon( \mathcal{E}_j(M_2)). \]
This means that inductively we get $\mathcal{E}_j(M)=\mathcal{E}_j(M_1)\cup\mathcal{E}_j(M_2)$ for all $j$. We conclude that $\mathcal{E}(M)=\mathcal{E}(M_1)\cup\mathcal{E}(M_2)$ and thus $\delta(M)=\delta(M_1)\oplus \delta(M_2)$.
\end{proof}

In the previous proposition we have seen that if $M$ is not connected, then $\delta M$ is not connected. We will next prove the opposite: if $M$ is connected, then $\delta M$ is also connected. In order to do so, we first need a partial result about connectivity among the fundamental circuits of $M$ with respect to a fixed basis $B$.

\begin{proposition}\label{proposition: all fundamental circuits in the same component} Let $B$ be a basis of a connected matroid $M$, and let $e, f\in E-B$. Then the circuits $C_{eB}$ and $C_{fB}$ are in the same connected component of $\delta M$.
\end{proposition}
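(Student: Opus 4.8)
The plan is to show that any two fundamental circuits $C_{eB}$ and $C_{fB}$, viewed as elements of the ground set $\mathcal{C}(M)$ of $\delta M$, are linked by a chain of circuits of $\delta M$, using the standard fact that two elements of a matroid lie in the same connected component if and only if they are joined by a sequence of circuits in which consecutive circuits overlap. I would split the argument into a \emph{local} step and a \emph{global} step. The local step shows that whenever two fundamental circuits share a ground-set element $b\in B$, they already lie in a common circuit of $\delta M$. The global step chains these overlaps together using the connectedness of $M$, encoded in the fundamental (basis) graph.

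For the local step, suppose $b\in C_{eB}\cap C_{fB}$ with $e\neq f$; note that $C_{eB}\neq C_{fB}$ since $e\in C_{eB}\setminus C_{fB}$. First I would bound the nullity of the support: as $C_{eB}\cup C_{fB}\subseteq B\cup\{e,f\}$ and any set containing the basis $B$ has rank $r(M)=|B|$, monotonicity of nullity gives $n(C_{eB}\cup C_{fB})\leq n(B\cup\{e,f\})=2$. Applying the circuit exchange axiom (C3) to the distinct circuits $C_{eB},C_{fB}$ and the common element $b$ yields a circuit $C_3\subseteq(C_{eB}\cup C_{fB})\setminus\{b\}$ of $M$; since $b\in C_{eB}\cap C_{fB}$ but $b\notin C_3$, the circuit $C_3$ differs from both $C_{eB}$ and $C_{fB}$. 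Then $A=\{C_{eB},C_{fB},C_3\}$ has $|A|=3$ and $\supp A=C_{eB}\cup C_{fB}$, so $n(\supp A)\leq 2<3=|A|$, whence $A\in\mathcal{A}_0\subseteq\mathcal{A}$. Since $\delta M$ is simple by Lemma~\ref{lemma: delta M is simple}, every proper subset of $A$ has at most two elements and is therefore independent, so $A$ is in fact a circuit of $\delta M$ containing both $C_{eB}$ and $C_{fB}$.

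For the global step, I would invoke the classical result (see \cite{oxley2011matroid}) that a matroid $M$ is connected if and only if its fundamental graph $G_B$ relative to $B$ is connected, where $G_B$ is bipartite on parts $B$ and $E\setminus B$ with $b\sim g$ exactly when $b\in C_{gB}$. A path in $G_B$ from $e$ to $f$ alternates as $e=e_0,b_1,e_1,\dots,b_k,e_k=f$, with each $b_i$ lying in both $C_{e_{i-1}B}$ and $C_{e_iB}$; thus consecutive fundamental circuits $C_{e_{i-1}B}$ and $C_{e_iB}$ intersect and, by the local step, lie in a common circuit of $\delta M$. By transitivity of the ``same component'' relation, $C_{eB}$ and $C_{fB}$ are in the same connected component of $\delta M$.

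The main obstacle will be the local step, and specifically confirming that the constructed $3$-set $A$ is genuinely a circuit of $\delta M$ rather than merely dependent: this is exactly where the simplicity of $\delta M$ and the sharpness of the nullity bound $n(C_{eB}\cup C_{fB})=2$ (forcing $|A|>n(\supp A)$ by the slimmest margin) are essential. A secondary point to handle carefully is ensuring $C_3\notin\{C_{eB},C_{fB}\}$, which relies on performing the exchange at the shared element $b$; and one must record that $e\neq f$ guarantees the two fundamental circuits are distinct so that (C3) applies.
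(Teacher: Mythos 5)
Your proposal is correct and follows essentially the same route as the paper: the local step (bounding $n(C_{eB}\cup C_{fB})\le n(B\cup\{e,f\})=2$, invoking circuit exchange to produce a third circuit, and using simplicity of $\delta M$ to conclude the resulting $3$-set is a circuit of $\delta M$) and the global step (connectedness of the bipartite fundamental graph $G_B$ plus transitivity of the same-component relation) are exactly the paper's argument. Your extra care in performing the exchange at the shared element $b$ to guarantee $C_3\notin\{C_{eB},C_{fB}\}$ is a small refinement of a detail the paper leaves implicit.
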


\begin{proof}
Assume first that $C_{eB}\cap C_{fB}\neq\emptyset$. Then $C_{eB}\cup C_{fB}\subseteq B\cup\{e,f\}$, so
$n(C_{eB}\cup C_{fB})\leq n(B\cup\{e,f\})=2$, and as $C_{eB}\cup C_{fB}$ contains two circuits, its nullity is indeed exactly two. By the circuit exchange axiom (C3), we have a third circuit $C'\subseteq C_{eB}\cup C_{fB}$, and so we have the dependent triangle $$\{C_{eB}, C_{fB}, C'\}\in \mathcal{A}_0\subseteq \mathcal{A}.$$ Since $\delta M$ is simple, this triangle is indeed a circuit in $\delta M$.  

Now consider the bipartite graph $G_B(M)$ with vertex classes $B\sqcup (E\setminus B)$ and edge set $$\{(i,j): i\in E\setminus B, j\in C_{iB}\}.$$ If there is a path of length $2$ from $e$ to $f$ in this graph, then $C_{eB}\cap C_{fB}\neq \emptyset$, so $C_{eB}$ and $C_{fB}$ are in the same connected component of $\delta M$. By transitivity of the property of being contained in a connected component (Proposition 4.1.2 in \cite{oxley2011matroid}), the same holds whenever there is a path of any (even) length from $e$ to $f$ in $G_B(M)$. 

But by Proposition 4.3.2 in \cite{oxley2011matroid}, the graph $G_B(M)$ is connected whenever the matroid $M$ is. Hence, all vertices in $E\setminus B$ are in the same connected component of the graph $G_B(M)$, and so all fundamental circuits $C_{eB}$ are in the same connected component of $\delta M$.
\end{proof}

\begin{theorem}
Let $M$ be a matroid on ground set $E$ such that $\supp(\delta M)=E$. Then $M$ is connected if and only if its combinatorial derived matroid $\delta M$ is connected.
\end{theorem}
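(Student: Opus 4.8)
The plan is to prove the two implications separately: the reverse implication (if $\delta M$ is connected then $M$ is connected) by contraposition, and the forward implication (if $M$ is connected then $\delta M$ is connected) directly. The reverse direction is short and is the only place where the hypothesis $\supp(\delta M)=E$ is used; the forward direction carries the content, and I would base it on Proposition~\ref{proposition: all fundamental circuits in the same component} together with a basis-exchange argument.

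For the reverse implication, suppose $M$ is disconnected, so $M=M_1\oplus M_2$ with both ground sets $E_1,E_2$ nonempty. The hypothesis $\supp(\delta M)=E$ says every element of $E$ lies in some circuit of $M$. Since a circuit of a direct sum lies entirely in one summand, each element of $E_1$ lies in a circuit contained in $E_1$, whence $\mathcal{C}(M_1)\neq\emptyset$, and likewise $\mathcal{C}(M_2)\neq\emptyset$. By the earlier proposition that $\delta(M_1\oplus M_2)=\delta M_1\oplus\delta M_2$, the matroid $\delta M$ is the direct sum of $\delta M_1$ and $\delta M_2$, both having nonempty ground set $\mathcal{C}(M_i)$. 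A direct sum of two nonempty matroids is disconnected, so $\delta M$ is disconnected, which is the contrapositive we want. I would remark that the support hypothesis is genuinely needed here: a coloop of $M$ contributes no circuit and hence nothing to $\delta M$, so adjoining a coloop to a connected matroid disconnects $M$ without altering $\delta M$.

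For the forward implication, assume $M$ is connected; I may assume $M$ has at least two elements, the smaller cases being trivial, so $M$ has a circuit and $\delta M$ is nonempty. Fix a basis $B$. By Proposition~\ref{proposition: all fundamental circuits in the same component}, all fundamental circuits $C_{eB}$ with $e\in E\setminus B$ lie in a single component $K_B$ of $\delta M$. A direct attempt to reach an arbitrary circuit from $K_B$ would want the set $\{C_{eB}:e\in C\setminus B\}$ to be independent in $\delta M$, which is not obvious for non-representable $M$; I would instead sidestep this by showing that $K_B$ is independent of the choice of $B$. Let $B'=(B\setminus\{x\})\cup\{y\}$ be a single basis exchange, with $x\in B$, $y\in E\setminus B$ and $x\in C_{yB}$. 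Then $B'\cup\{x\}=B\cup\{y\}$, so the unique circuit in $B'\cup\{x\}$ equals the unique circuit in $B\cup\{y\}$; that is, $C_{xB'}=C_{yB}$. This single circuit is simultaneously a fundamental circuit of $B$ and of $B'$, so it lies in $K_B\cap K_{B'}$, forcing $K_B=K_{B'}$. Since the basis-exchange graph of $M$ is connected~\cite{oxley2011matroid}, $K_B$ is one and the same component $K$ for every basis $B$.

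To finish, I would show every circuit $C$ of $M$ lies in $K$, so $\delta M$ has a single component and is connected. Choose any $e\in C$; then $C\setminus\{e\}$ is independent and extends to a basis $B_C$, and since $C$ is dependent we have $e\notin B_C$, so $C\subseteq B_C\cup\{e\}$ forces $C=C_{eB_C}$. Thus $C$ is a fundamental circuit with respect to $B_C$, giving $C\in K_{B_C}=K$. The main obstacle is precisely the basis-independence of the components $K_B$: a priori different bases could yield different components, and the crux is the identity $C_{xB'}=C_{yB}$, which produces a single circuit witnessing the equality of components across one exchange. Once that is in hand the argument is routine, and I would separately verify the boundary cases (free matroids, one-element ground sets) and confirm that the support hypothesis is used only in the reverse implication, being automatic whenever $M$ is connected with at least two elements.
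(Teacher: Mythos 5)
Your proof is correct and follows essentially the same route as the paper: the reverse implication via the direct-sum proposition (with the support hypothesis guaranteeing both summands contribute circuits), and the forward implication via Proposition~\ref{proposition: all fundamental circuits in the same component} together with the single-exchange identity $C_{xB'}=C_{yB}$, iterated over basis exchanges until every fundamental circuit --- hence every circuit --- lies in one component. Your version is in fact slightly more explicit than the paper's in justifying that every circuit is fundamental with respect to some basis and in isolating exactly where $\supp(\delta M)=E$ is used.
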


\begin{proof}
Let $M$ be connected. By Proposition \ref{proposition: all fundamental circuits in the same component} all fundamental circuits with respect to some fixed basis $B$ of $M$ are elements of the same connected component of $\delta M$. We will now use the base exchange axiom iteratively to show that all fundamental circuits of $M$ are elements of the same connected component of $\delta M$, proving that $\delta M$ is connected.

If $M$ has only one basis, we are done. So, let $B_1$ and $B_2$ be bases of $M$. We will use induction on the size of $B_1 \Delta B_2$, which is always an even number as all bases have the same cardinality.

Assume now that whenever $|B_1 \Delta B_2|\leq k$, then all fundamental circuits with respect to $B_1$ or $B_2$ are elements of the same connected component of $\delta M$. Let $|B_1 \Delta B_2|= k+2$ and take $a \in B_1\backslash B_2$. By the basis exchange axiom, there exists $b \in B_2\backslash B_1$, such that $B'=(B_1 \backslash\{a\})\cup \{b\}$ is a basis of $M$. Let $C_{b}$ be the fundamental circuit of $b$ with respect to $B_1$ and $C_{a}$ be the fundamental circuit of $a$ with respect to $B'$. Then $C_a \subseteq B' \cup \{a\}=B_1 \cup \{b\}$. The uniqueness of fundamental circuits implies that $C_a=C_b$. This implies that all fundamental circuits with respect to $B'$ are elements of the same connected component of $\delta M$ as the fundamental circuits with respect to $B_1$. But now $B' \Delta B_2\subseteq (B_1 \Delta B_2) \backslash \{a,b\} $, so $|B' \Delta B_2|\leq k$ and we can apply the induction assumption and conclude that the fundamental circuits with respect to $B_2$ are elements of the same connected component as those with respect to $B'$, and hence also of same connected component as the fundamental circuits of $B_1$.

Applying the procedure above to all bases of $M$ gives that all fundamental circuits of $M$ are elements of the same connected component of $\delta M$. Since all circuits are fundamental circuits with respect to some basis, it follows that $\delta M$ has only one connected component and is thus connected.
\end{proof}

\section{Independent sets and rank}\label{sec: rank}

In this section we derive some further structural results about the combinatorial derived matroid. In cases where we could not find proofs or counterexamples, we make conjectures.

The following result was proven in \cite{MR4246979} for the Oxley--Wang derived matroid. It shows how to find independent sets in $\delta_{OW}$, independent of the representation of $M$.

\begin{lemma}[Lemma 5.4 of \cite{MR4246979}]\label{lemma: hanging elements}
Let $M$ be an $\mathbb{F}$-representable matroid on the ground set $[n]$. Take any set of circuits $S=\{C_1,\ldots,C_m\}$ such that every circuit $C_i$ satisfies
\[ C_i-\bigcup_{C_j\in S-\{C_j\}}C_j\neq\emptyset. \]
Then $S$ is independent in the derived matroid $\delta_{OW}$ of every $\mathbb{F}$-representation of $M$.
\end{lemma}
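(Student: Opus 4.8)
The plan is to reduce the statement to a claim about linear independence of circuit vectors, and then exploit the ``private element'' hypothesis through an elementary support argument. Recall from Definition~\ref{def:derived_code} that if $Q$ is an $\mathbb{F}$-representation of $M$, then $\delta_{OW}(Q)$ is the matroid represented by the matrix $A\in\F^{n\times\mathcal{C}}$ whose column indexed by $C\in\mathcal{C}$ is a circuit vector $q_C\in Q^\perp$ with $\supp(q_C)=C$. Since $A$ is a generator matrix of the derived code $\delta(Q)$ and the ground set of $\delta_{OW}(Q)$ is $\mathcal{C}$, a subset $S\subseteq\mathcal{C}$ is independent in $\delta_{OW}(Q)$ precisely when the circuit vectors $\{q_C:C\in S\}$ are linearly independent over $\F$. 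So it suffices to prove, for an arbitrary representation, that the hypothesis forces $q_{C_1},\dots,q_{C_m}$ to be linearly independent.

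First I would use the hypothesis to fix, for each $i$, a \emph{private} coordinate $p_i\in C_i\setminus\bigcup_{j\neq i}C_j$, which exists exactly because $C_i-\bigcup_{j\neq i}C_j\neq\emptyset$. The role of $p_i$ is that, since $\supp(q_{C_j})=C_j$, the $p_i$-th coordinate of $q_{C_j}$ vanishes for every $j\neq i$ (as $p_i\notin C_j$), while the $p_i$-th coordinate of $q_{C_i}$ is nonzero (as $p_i\in C_i=\supp(q_{C_i})$).

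Next I would test an arbitrary linear relation $\sum_{i=1}^m\lambda_i q_{C_i}=0$ against each private coordinate. Reading off the $p_i$-th coordinate of this relation, every term with $j\neq i$ drops out and only $\lambda_i(q_{C_i})_{p_i}$ survives, giving $\lambda_i(q_{C_i})_{p_i}=0$; since $(q_{C_i})_{p_i}\neq 0$ this yields $\lambda_i=0$. Running this over all $i$ shows every coefficient vanishes, so the circuit vectors are linearly independent and $S$ is independent in $\delta_{OW}(Q)$. Because the only property of the representation used is $\supp(q_C)=C$, which holds for the circuit vectors of any $\F$-representation, the conclusion holds for every $\F$-representation simultaneously, which is precisely the representation-independence asserted.

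There is essentially no hard computation here: the private-coordinate argument is a one-line diagonalization once it is set up. The only step requiring genuine care is the initial cryptomorphic translation, namely making sure that ``independent in $\delta_{OW}$'' is read as linear independence of the \emph{columns} $q_C$ of the generator matrix $A$, rather than of the coordinates of the code $\delta(Q)$; getting this identification right (and recording that $\supp(q_C)=C$ is the sole representation-dependent input) is where the main conceptual obstacle lies.
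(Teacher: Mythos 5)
Your argument is correct. Note first that the paper does not prove this lemma at all: it is imported as Lemma~5.4 of the cited reference, so there is no in-paper proof to compare against. Your reduction of independence in $\delta_{OW}(Q)$ to linear independence of the columns $q_C$ of the matrix $A$ from Definition~\ref{def:derived_code} is the right cryptomorphic reading (the matroid of a code is the column matroid of any generator matrix, and $A$ generates $\delta(Q)$), and the private-coordinate evaluation $\lambda_i(q_{C_i})_{p_i}=0$ with $(q_{C_i})_{p_i}\neq 0$ is exactly the standard argument. It is also the same reasoning the paper itself uses implicitly in Section~3.2, where the observation $e\notin C_{fB}$ for $e\neq f$ is used to conclude that the set of fundamental circuits $\delta B$ is independent in $\delta_{OW}(Q)$; your lemma is the general form of that special case. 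One small point: the displayed hypothesis contains a typo ($S-\{C_j\}$ should read $S-\{C_i\}$), and you have correctly interpreted it as the existence of a private element $p_i\in C_i\setminus\bigcup_{j\neq i}C_j$.
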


One can aim to prove a similar result for the combinatorial derived matroid, also for matroids that are not representable. A first step is the following lemma.

\begin{lemma}\label{lemma: hanging in A0}
Let $M$ be a matroid with $\mathcal{C}$ its set of circuits. Let $S\subseteq\mathcal{C}$ with $S\notin\mathcal{A}_0$ and let $C\in\mathcal{C}$ such that $C-\supp(S)\neq\emptyset$. Then $S\cup\{C\}\notin\mathcal{A}_0$.
\end{lemma}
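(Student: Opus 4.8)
The plan is to unwind the definition of $\mathcal{A}_0$ and reduce the statement to a single nullity inequality, which then follows from supermodularity. Recall that $S\notin\mathcal{A}_0$ means precisely $|S|\leq n(\supp(S))$, and that we wish to establish $|S\cup\{C\}|\leq n(\supp(S\cup\{C\}))$. First I would observe that $C\notin S$: indeed, if $C$ were a member of $S$ then $C\subseteq\supp(S)$, contradicting $C-\supp(S)\neq\emptyset$. Hence $|S\cup\{C\}|=|S|+1$ and $\supp(S\cup\{C\})=\supp(S)\cup C$, so the goal becomes
\[ |S|+1\leq n(\supp(S)\cup C). \]

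Since $|S|\leq n(\supp(S))$ by hypothesis, it suffices to prove the strict increase $n(\supp(S)\cup C)\geq n(\supp(S))+1$. This is where the hanging-element hypothesis enters. Writing $T=\supp(S)$, I would apply supermodularity of the nullity function to the sets $T$ and $C$:
\[ n(T)+n(C)\leq n(T\cup C)+n(T\cap C). \]
Because $C$ is a circuit, $n(C)=1$. Because $C-T\neq\emptyset$, the set $T\cap C$ is a proper subset of the circuit $C$, hence independent, so $n(T\cap C)=0$. Substituting these two facts yields $n(T\cup C)\geq n(T)+1$, exactly the required increase.

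Combining the two displays gives $|S\cup\{C\}|=|S|+1\leq n(T)+1\leq n(T\cup C)=n(\supp(S\cup\{C\}))$, so $S\cup\{C\}\notin\mathcal{A}_0$, as desired.

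I do not expect a genuine obstacle here; the only thing to get right is the bookkeeping that makes the two error terms in the supermodularity inequality collapse. The conceptual point is that the extra circuit $C$ contributes its full nullity of $1$ to the union precisely because its overlap $T\cap C$ with the existing support avoids at least one element, forcing that overlap to be a proper --- and therefore independent --- subset of $C$. Everything else is a direct substitution, so no case analysis or induction is needed.
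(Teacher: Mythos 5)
Your proof is correct and is essentially the paper's own argument: both rest on supermodularity of nullity together with the observation that $C\cap\supp(S)$ is a proper subset of the circuit $C$, hence independent with nullity $0$. Your explicit remark that $C\notin S$ (so that $|S\cup\{C\}|=|S|+1$) is a small bookkeeping point the paper leaves implicit, but otherwise the two arguments coincide.
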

\begin{proof}
Since $C\cap\supp(S)\subsetneq C$, the set $C\cap\supp(S)$ is independent in $M$ and hence $n(C\cap\supp(S))=0$. Because $S\notin\mathcal{A}_0$, we have $|S|\leq n(S)$. Combining this with the supermodularity of nullity, we get that
\begin{align*}
n(S\cup\{C\}) &=n(\supp(S)\cup C)+n(C\cap\supp(S)) \\
 & \geq n(S)+n(C) \\
 & \geq |S|+1 \\
 & =|S\cup\{C\}|
\end{align*}
and it follows that $S\cup\{C\}\notin\mathcal{A}_0$.
\end{proof}

A next step could be to prove a similar result for $\mathcal{A}_i$ instead of $\mathcal{A}_0$. Then, because $\mathcal{A}=\bigcup\mathcal{A}_i$, we would have the following.

\begin{conjecture}\label{conjecture: hanging elements}
Let $M$ be a matroid with $\mathcal{C}$ its set of circuits. Let $S\subseteq\mathcal{C}$ with $S\notin\mathcal{A}$ and let $C\in\mathcal{C}$ such that $C-\supp(S)\neq\emptyset$. Then $S\cup\{C\}\notin\mathcal{A}$.
\end{conjecture}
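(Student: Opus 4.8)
The plan is to prove this by induction on the depth $i$, strengthening Lemma~\ref{lemma: hanging in A0} from $\mathcal{A}_0$ to each $\mathcal{A}_i$. The statement to prove by induction is: if $S\notin\mathcal{A}_i$ and $C-\supp(S)\neq\emptyset$ for some $C\in\mathcal{C}$, then $S\cup\{C\}\notin\mathcal{A}_i$. Since $\mathcal{A}=\bigcup_i\mathcal{A}_i$, the conjecture follows: if $S\cup\{C\}\in\mathcal{A}$ then $S\cup\{C\}\in\mathcal{A}_i$ for some $i$, and the contrapositive of the strengthened statement (together with $S\notin\mathcal{A}\Rightarrow S\notin\mathcal{A}_i$) would force $S\in\mathcal{A}_i\subseteq\mathcal{A}$, a contradiction. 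The base case $i=0$ is exactly Lemma~\ref{lemma: hanging in A0}.

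For the inductive step, I would assume the statement holds for $\mathcal{A}_i$ and prove it for $\mathcal{A}_{i+1}={\uparrow}\epsilon(\mathcal{A}_i)$. It is cleanest to argue the contrapositive: assume $S\cup\{C\}\in\mathcal{A}_{i+1}$ with $C-\supp(S)\neq\emptyset$, and derive $S\in\mathcal{A}_{i+1}$. Because $\mathcal{A}_{i+1}$ is upward closed, I may replace $S\cup\{C\}$ by a minimal element $A\subseteq S\cup\{C\}$ of $\mathcal{A}_{i+1}$; such a minimal element lies in $\epsilon(\mathcal{A}_i)$. If $C\notin A$, then $A\subseteq S$, so $S\in\mathcal{A}_{i+1}$ and we are done. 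So assume $C\in A$. Writing $A=(A_1\cup A_2)\setminus\{C'\}$ with $A_1,A_2\in\mathcal{A}_i$, $A_1\cap A_2\notin\mathcal{A}_i$, and $C'\in A_1\cap A_2$, the key observation is that $C$ has a private element $x\in C-\supp(S)$; since $A-\{C\}\subseteq S$, this $x$ lies in $C$ but in no other circuit of $A$, hence $x\in\supp(A)$ is covered only by $C$.

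The main obstacle is controlling how $C$'s private element $x$ distributes across $A_1$ and $A_2$. Since $x$ appears in the support of exactly one member of $A$, namely $C$, and $A=(A_1\cup A_2)\setminus\{C'\}$ with $C\neq C'$, the circuit $C$ belongs to $A_1$ or $A_2$ (or both); without loss of generality $C\in A_1$, and $x$ witnesses that $C$ is not redundant there. The idea is to show that $A_1-\{C\}$ or a modification of the decomposition still lies in $\mathcal{A}_i$ and can be recombined to produce a subset of $S$ in $\mathcal{A}_{i+1}$, invoking the inductive hypothesis on $A_1$ (or $A_2$) with the private element $x$ to strip $C$ away while staying inside $\mathcal{A}_i$. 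Concretely, one wants: from $A_1\in\mathcal{A}_i$ with $C\in A_1$ and $x\in C$ private to $C$ within $A_1$, the inductive hypothesis (applied to $S':=A_1-\{C\}$, noting $x\in C-\supp(S')$) gives that $A_1-\{C\}\notin\mathcal{A}_i$ would be false unless $A_1\notin\mathcal{A}_i$; this tension is what must be resolved carefully, because the induction hypothesis as stated controls adding $C$ to a set not in $\mathcal{A}_i$, not removing it.

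Because of this mismatch, the genuinely hard part is that removing a privately-supported circuit from a member of $\mathcal{A}_i$ need not keep it in $\mathcal{A}_i$ — the $\epsilon$ operation can create dependent sets whose nullity bound is only met because of $C$'s private coordinate. This is precisely why the authors flag it as a conjecture rather than a lemma: the clean supermodularity computation available at depth $0$ has no evident analogue once sets arise through iterated exchanges, and a successful proof would likely require an auxiliary invariant (for instance, tracking $n(\supp(A))$ relative to $|A|$ along the construction, or a stronger simultaneous induction bounding both $S$ and $S\cup\{C\}$) rather than the naive depthwise induction sketched above.
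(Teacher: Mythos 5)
This statement is not a theorem in the paper: it is stated as Conjecture~7.3 precisely because the authors could not prove it, and immediately afterwards (Example~\ref{example: Vamos hanging}, due to Santiago G\'uzman Pro) they \emph{refute} it. In the V\'amos matroid, the set $S=\{adgh,bcef,bcgh,defgh\}$ lies in $\mathcal{A}$ (it arises from $A_1=\{adef,adgh,defgh\}\in\mathcal{A}_0$ and $A_2=\{adef,adgh,bcef,bcgh\}\in\mathcal{A}_1$ via the $\epsilon$ operation), and one checks that whichever status $S\setminus\{adgh\}$ has, applying the conjectured statement to an appropriate pair yields a contradiction: if $S\setminus\{adgh\}\notin\mathcal{A}$ then, since $adgh$ has the private element $a$, the conjecture would force $S\notin\mathcal{A}$; if instead $S\setminus\{adgh\}\in\mathcal{A}$, then $defgh$ has the private element $d$ over $\{bcef,bcgh\}$, which is independent by simplicity, and the conjecture would force $S\setminus\{adgh\}\notin\mathcal{A}$. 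So no proof of this statement can succeed, and any argument that appears to close the inductive step must contain an error.

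To your credit, your proposal does not actually claim to close the gap: you correctly isolate the obstruction, namely that the induction hypothesis lets you \emph{add} a circuit with a private element to a set outside $\mathcal{A}_i$, but gives no control over \emph{removing} such a circuit from a set inside $\mathcal{A}_i$, and that sets produced by iterated applications of $\epsilon$ need not satisfy any nullity inequality analogous to the one used in Lemma~\ref{lemma: hanging in A0}. What you miss is that this obstruction is not merely a technical difficulty awaiting a cleverer invariant --- it is fatal. The V\'amos computation shows that a set in $\mathcal{A}$ can contain a circuit with a coordinate private to it, so the ``hanging element'' phenomenon that holds at depth~$0$ (and for Oxley--Wang derived matroids, Lemma~\ref{lemma: hanging elements}) genuinely fails for the combinatorial derived matroid once the $\epsilon$ operation has been iterated. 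The correct conclusion to draw from your analysis is therefore not ``a stronger simultaneous induction is needed'' but ``look for a counterexample among non-representable matroids where $\mathcal{A}\neq\mathcal{A}_0$.''
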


Note that from the above conjecture Lemma \ref{lemma: hanging elements} for the combinatorial derived matroid follows. Unfortunately, Conjecture \ref{conjecture: hanging elements} is not true, as the next example (due to Santiago G\'uzman Pro) shows.

\begin{example}\label{example: Vamos hanging}
Let $M$ be the V\'{a}mos matroid as in Example~\ref{example: Vamos matroid}. Let
\[A_1=\{adef, adgh, defgh\} \quad \text{and} \quad A_2=\{adef,adgh,bcef,bcgh\}.\]
Then $A_1 \in \mathcal{A}_0$ by the nullity condition and it was shown in Example~\ref{example: Vamos matroid} that $A_2 \in \mathcal{A}_1$. We now apply operation $\epsilon$ (Definition \ref{def:Operations}) on $A_1$ and $A_2$. Since $|A_1\cap A_2|=2$, $A_1\cap A_2\notin\mathcal{A}_1$ by Lemma \ref{lemma: delta M is simple}, and thus $S:=(A_1\cap A_2)\backslash\{adef\}\in\epsilon(\mathcal{A}_1)$. So we get $S=\{adgh,bcef,bcgh,defgh\}$ and $S\in\mathcal{A}$.

Let $C=adgh$ and consider $S\backslash\{C\}$: it needs to be either independent or dependent in $\delta M$. Suppose it is independent, so $S\backslash\{C\}\notin\mathcal{A}$. We have that $C-\supp(S\backslash\{C\})=a$. Applying Conjecture \ref{conjecture: hanging elements} now gives that $S\notin\mathcal{A}$, which is a contradiction. Now suppose $S\backslash\{C\}\in\mathcal{A}$ and let $C'=defgh$. We have that $C'-\supp(S\backslash\{C,C'\})=d$. Applying Conjecture \ref{conjecture: hanging elements} gives that $S\backslash\{C,C'\}\in\mathcal{A}$, which is a contradiction since by Lemma \ref{lemma: delta M is simple}, $S\backslash\{C,C'\}$ is independent because it has size $2$. We conclude that Conjecture \ref{conjecture: hanging elements} cannot be true.
\end{example}

Although this example does not disprove Lemma \ref{lemma: hanging elements} for the combinatorial derived matroid, it shows that new ideas are needed in order to prove or disprove the statement.

Unfortunately, we were not able to prove a more precise statement about the independent sets of $\delta M$. We do have the following straightforward observation about the rank of $\delta M$.

\begin{lemma}\label{lemma: rank derived}
Let $M$ be a non-empty connected matroid of rank $k$. Then the rank of the combinatorial derived matroid $\delta M$ is at most $n-k$.
\end{lemma}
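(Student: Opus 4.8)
The plan is to prove the bound directly from the base case $\mathcal{A}_0$ of the construction, by showing that any collection of circuits that is large enough is automatically forced into $\mathcal{A}_0$. Recall that the rank of $\delta M$ is the maximum cardinality of a subset $A\subseteq\mathcal{C}$ that is \emph{not} in $\mathcal{A}$ (i.e.\ an independent set of $\delta M$). Hence it suffices to prove the contrapositive statement that every $A\subseteq\mathcal{C}$ with $|A|>|E|-k$ is dependent in $\delta M$.

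To do this, I would first invoke that the nullity function is increasing on $2^E$, which was noted earlier as a consequence of axiom (D2). Since $\supp(A)\subseteq E$ for every $A\subseteq\mathcal{C}$, monotonicity gives $n(\supp(A))\leq n(E)=|E|-r(E)=|E|-k$. Consequently, if $|A|>|E|-k$, then
\[
|A|>|E|-k\geq n(\supp(A))=n(A),
\]
which is precisely the defining inequality \eqref{eq:A0 definition} for membership in $\mathcal{A}_0$. Therefore $A\in\mathcal{A}_0\subseteq\mathcal{A}$, so $A$ is dependent. Since every independent set of $\delta M$ thus has at most $|E|-k$ elements, we conclude $r(\delta M)\leq|E|-k=n-k$, as claimed.

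There is essentially no obstacle in this direction: the operations $\epsilon$ and ${\uparrow}$ are never needed, because the bound is already witnessed at depth $0$, and the only tool used is the monotonicity of nullity. Indeed, the connectedness hypothesis plays no role here and is presumably carried along only for consistency with the surrounding results. The one point that genuinely demands care is notational rather than mathematical, namely keeping the two uses of the symbol $n$ apart --- the nullity function $n(\cdot)$ versus the ground-set size $n=|E|$ --- and checking that the inequality defining $\mathcal{A}_0$ points in the direction that places \emph{large} circuit collections (not small ones) into the dependent sets.
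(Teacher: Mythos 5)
Your proof is correct and is essentially identical to the paper's: both observe that $n(\supp(A))\leq n(E)=n-k$ for every $A\subseteq\mathcal{C}$, so any $A$ with $|A|>n-k$ already lies in $\mathcal{A}_0$ and is therefore dependent, bounding the rank by $n-k$. Your side remark that connectedness is not actually used matches the paper's argument as well.
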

\begin{proof}
The nullity of any subset of $\mathcal{C}$ is at most the nullity of $M$, which is $n-k$. This means that any subset of cardinality $n-k+1$ of $\mathcal{C}$ is in $\mathcal{A}_0$ and is thus dependent in $\delta M$. Now the rank of $\delta M$ is the cardinality of its largest independent set, which we just saw is at most $n-k$.
\end{proof}

It is unknown if the rank of the combinatorial derived matroid is always equal to $n-k$. We give a conjecture and a question towards a solution to this problem.

\begin{conjecture}
Let $M$ be a matroid such that $M^*$ has an adjoint. Then $\delta M$ is isomorphic to an adjoint of $M^*$. In particular, the rank of $\delta M$ is equal to $n-k$.
\end{conjecture}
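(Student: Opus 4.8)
The plan is to prove the stronger statement that $\delta M$ is \emph{itself} an adjoint of $M^*$. Since an adjoint need not be unique and the conjecture only asserts the existence of \emph{one} adjoint isomorphic to $\delta M$, this suffices, and it matches the heuristic that $\delta M$ should be the ``most generic'' (freest) such object. Throughout I identify the ground set $\mathcal{C}=\mathcal{C}(M)$ of $\delta M$ with the atoms of $L(M^*)^{opp}$: a circuit $C$ of $M$ is a cocircuit of $M^*$, so $H_C:=E\setminus C$ is a hyperplane of $M^*$, that is, a coatom of $L(M^*)$ and hence an atom of $L(M^*)^{opp}$.

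First I would establish a rank inequality, which already yields the ``in particular'' clause. Let $N$ be any adjoint of $M^*$, with $\psi$ as in Lemma~\ref{lemma: adjoint properties}. For $A=\{C_1,\dots,C_m\}\subseteq\mathcal{C}$ let $p_i=\psi(H_{C_i})$ be the corresponding atoms of $N$. Since $\psi$ is order-reversing (Lemma~\ref{lemma: adjoint properties}(1)) and $H_{C_i}\ge\bigwedge_j H_{C_j}$ in $L(M^*)$, we get $p_i\le\psi(\bigwedge_j H_{C_j})$, hence $\bigvee_i p_i\le\psi(\bigwedge_j H_{C_j})$. Taking ranks and using property (2), the fact that meet of flats is intersection, and the rank-duality identity $r_{M^*}(S)=|S|-r(M)+r_M(E\setminus S)$, a short computation gives
\[ r_N(A)=r_N\!\Big(\bigvee_i p_i\Big)\le r(M^*)-r_{M^*}\!\Big(\bigcap_i H_{C_i}\Big)=n_M\big(\supp(A)\big). \]
In particular every $A\in\mathcal{A}_0$ satisfies $r_N(A)\le n_M(\supp(A))<|A|$, so $\mathcal{A}_0\subseteq\mathcal{D}(N)$. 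Because $\mathcal{D}(N)$ obeys (D2) and (D3), an induction on depth gives $\mathcal{A}_i\subseteq\mathcal{D}(N)$ for all $i$, whence $\mathcal{A}\subseteq\mathcal{D}(N)$ and $r(\delta M)\ge r(N)=r(M^*)=n-k$. Combined with Lemma~\ref{lemma: rank derived} this proves $r(\delta M)=n-k$.

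For the isomorphism I would verify Definition~\ref{definition: adjoint} directly for $L(\delta M)$, using the candidate embedding
\[ \psi_\delta\colon L(M^*)\to L(\delta M),\qquad \psi_\delta(F)=\operatorname{cl}_{\delta M}\{C\in\mathcal{C}:C\cap F=\emptyset\}. \]
On atoms this is the identification above, $\delta M$ is simple by Lemma~\ref{lemma: delta M is simple} so $L(\delta M)$ is geometric, and $r(\delta M)=r(M^*)=r(L(M^*)^{opp})$ by the previous paragraph. Order-reversal and injectivity of $\psi_\delta$ are routine once the rank condition of Lemma~\ref{lemma: adjoint properties}(2) is known, so the statement reduces to the single identity $r_{\delta M}(\psi_\delta(F))=r(M^*)-r_{M^*}(F)$. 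Writing $X=E\setminus F$, the right-hand side equals $n_M(X)$, while $\supp(\psi_\delta(F))$ is the union of all circuits inside $X$, which differs from $X$ only by coloops of $M|X$ and hence has the same nullity $n_M(X)$. Thus the bound of the previous paragraph gives one inequality, $r_{\delta M}(\psi_\delta(F))\le n_M(X)$, for free, and everything comes down to the reverse inequality $r_{\delta M}(\psi_\delta(F))\ge n_M(X)$.

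The reverse inequality is the main obstacle, and it is exactly a statement about independent sets of $\delta M$: one must exhibit $n_M(X)$ circuits of $M$, all supported in $X=E\setminus F$, that are independent in $\delta M$ (for instance the fundamental circuits $C_{eB}$ with $e\in X\setminus B$ for a suitable basis $B$ of $M|X$). Producing such an independent family combinatorially is precisely the control over independent sets that the paper leaves open: the natural ``hanging element'' mechanism that would guarantee it, Conjecture~\ref{conjecture: hanging elements}, is false as stated (Example~\ref{example: Vamos hanging}), so a genuinely new argument is required. Equivalently, one must show that the $\epsilon$-closure reproduces every non-modular join of the adjoint lattice without collapsing these fundamental circuits, which is where the freeness of $\delta M$ must be exploited and where the hypothesis that $M^*$ admits an adjoint at all has to enter beyond the easy direction.
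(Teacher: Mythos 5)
First, note that the statement you were asked to prove is stated in the paper only as a conjecture: the authors explicitly write that it is unknown whether $r(\delta M)=n-k$ in general, and they give no proof. So there is no argument of theirs to compare yours against; your proposal has to stand on its own, and it does not: it is a well-organized research plan containing two genuine gaps, one of which you acknowledge and one of which you do not.

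The unacknowledged gap is the step ``Because $\mathcal{D}(N)$ obeys (D2) and (D3), an induction on depth gives $\mathcal{A}_i\subseteq\mathcal{D}(N)$ for all $i$.'' The base case $\mathcal{A}_0\subseteq\mathcal{D}(N)$ is fine: your rank computation $r_N(A)\le n_M(\supp(A))$ is correct and is the adjoint analogue of Lemma~\ref{lm:A0generic}. But the inductive step does not follow from the axioms for $N$. The operation $\epsilon$ adds $(A_1\cup A_2)\setminus\{C\}$ whenever $A_1,A_2\in\mathcal{A}_i$ and $A_1\cap A_2\notin\mathcal{A}_i$; to conclude that this set is dependent in $N$ via (D3) you would need $A_1\cap A_2\notin\mathcal{D}(N)$, whereas all you know is $A_1\cap A_2\notin\mathcal{A}_i$. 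If $A_1\cap A_2$ happens to be dependent in $N$ --- which the inductive hypothesis $\mathcal{A}_i\subseteq\mathcal{D}(N)$ in no way excludes, since that containment may be strict --- then submodularity only yields $r_N(A_1\cup A_2)\le|A_1\cup A_2|-1$, and the dependence of $(A_1\cup A_2)\setminus\{C\}$ in $N$ does not follow. This is precisely why the paper proves the weak-order comparison only under the hypothesis $\delta M=(\mathcal{C},\mathcal{A}_0)$ (Lemma~\ref{lm:A0generic}) and leaves the general containment open as Conjecture~\ref{conjecture: CategoricallyGeneric}; your induction, if it worked, would settle that conjecture in passing, which should have been a warning sign. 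The second gap --- the lower bound $r_{\delta M}(\psi_\delta(F))\ge n_M(E\setminus F)$, i.e., exhibiting enough circuits of $M$ inside $E\setminus F$ that remain independent in $\delta M$ --- you identify honestly yourself, and it is indeed the heart of the matter, since Example~\ref{example: Vamos hanging} destroys the obvious mechanism for producing such independent families. In sum, you have a correct proof that membership in $\mathcal{A}_0$ forces dependence in any adjoint of $M^*$, a sensible reduction of the conjecture to two sub-statements, and a proof of neither sub-statement. That is a useful map of the problem, but it is not a proof.
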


It is shown in~\cite[Lemma 3]{oxley2019dependencies} that if $B$ is a basis of a representable matroid $M$ and $R$ any representation of $M$, then the set of fundamental circuits with respect to $B$ is a basis of $\delta_{OW}(R)$. This motivates the following question.
\begin{question}
    Does there always exist a basis $B$ of $M$ such that a subset of the fundamental circuits with respect to $B$ is a basis in $\delta M$?
\end{question}

\section{Comparison of definitions}\label{sec: comparison of def}
In this section, we present some partial results relating our construction to previous notions of derived matroids from the literature. In Section~\ref{sec:OxleyWangComp}, we relate $\delta M$ to the Oxley--Wang derived matroid $\delta_{OW}(R)$, where $R$ is a representation of $M$, and as a special case of this to the Longyear derived matroid $\delta_L(M)$ when $M$ is binary. In Section~\ref{sec:AdjointComp}, we compare the lattice of flat $\mathcal{F}(\delta(M))$ to adjoint lattices of the lattice of cyclic sets $\mathcal{U}(M)\cong\mathcal{F}(M^*)$. All results in this section are admittedly rather preliminary, and mainly serve as inspiration for further research. 

\subsection{Oxley and Wang}\label{sec:OxleyWangComp}
The relation between the combinatorial derived matroid and the Oxley--Wang derived matroid is not entirely straightforward. In particular, Example 5.5 shows that when $M=M(K_4)$ and $\F$ has characteristic two, there are representations of $M$ over $\F$, but none of these representations yields $\delta_{OW}(R)\cong \delta(M)$, even if we replace $\F$ by an extension field. However, we hope that in the precise sense of Conjecture~\ref{conjecture: CategoricallyGeneric}, the combinatorial derived matroid has less dependences than any Oxley-Wang derived matroid.

Our first results in this direction are the following. 

\begin{lemma}\label{lm:A0generic}
If $\delta M=(\mathcal{C},\mathcal{A}_0)$, then all dependent sets in $\delta M$ are dependent in $\delta_{OW} (Q)$ for every representation $Q$ of $M$.
\end{lemma}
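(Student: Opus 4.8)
The plan is to unwind the hypothesis and reduce the statement to the dimension count that originally motivated the definition of $\mathcal{A}_0$ in Section~\ref{sec:combinatorial derived matroid}. Since $\delta M=(\mathcal{C},\mathcal{A}_0)$ by assumption, the dependent sets of $\delta M$ are exactly the members of $\mathcal{A}_0$, so it suffices to fix an arbitrary $A\in\mathcal{A}_0$ and an arbitrary representation $Q$ of $M$, and show that $A$ is dependent in $\delta_{OW}(Q)$. Reading off Definition~\ref{def:derived_code}, the ground set of $\delta_{OW}(Q)$ is $\mathcal{C}$ and a subset $A\subseteq\mathcal{C}$ is dependent precisely when the corresponding columns $\{q_C:C\in A\}$ of the generator matrix of $\delta(Q)$ are linearly dependent over $\F$, where $q_C\in Q^\perp$ is the circuit vector with $\supp(q_C)=C$. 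So the goal is to establish linear dependence of this family of $|A|$ vectors.

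First I would localize these vectors: for every $C\in A$ we have $\supp(q_C)=C\subseteq\supp(A)$, so $q_C\in Q^\perp(\supp(A))$, and hence the whole family $\{q_C:C\in A\}$ lies in the shortened dual code $Q^\perp(\supp(A))$. Next I would invoke the identity $\dim Q^\perp(S)=n(S)$ recorded in Section~\ref{sec:combinatorial derived matroid}, which follows from $Q^\perp(S)=(Q|_S)^\perp$ together with $\dim(Q|_S)=r(S)$. Applying this with $S=\supp(A)$ yields $\dim Q^\perp(\supp(A))=n(A)$. Since $A\in\mathcal{A}_0$ means $|A|>n(A)$, the family $\{q_C:C\in A\}$ consists of $|A|$ vectors inside a space of dimension strictly less than $|A|$, and is therefore linearly dependent. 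This is exactly dependence of $A$ in $\delta_{OW}(Q)$, and as $Q$ was arbitrary the lemma follows.

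The argument is genuinely short, and I do not anticipate a real obstacle: its entire content is the representation-independent sufficient condition $n(\supp(A))<|A|$ that the collection $\mathcal{A}_0$ was built to encode, so the lemma is in effect a verification that the combinatorial definition captures only dependences forced in every representation. The one point requiring care is to read the dependence notion for $\delta_{OW}(Q)$ correctly from Definition~\ref{def:derived_code}, namely that columns of the derived generator matrix are the circuit vectors $q_C$ and that dependence there is ordinary linear dependence of these columns; everything else is the dimension inequality above.
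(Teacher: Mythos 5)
Your proof is correct and follows exactly the paper's own argument: both localize the circuit vectors $\{q_C : C\in A\}$ inside the shortened dual code $Q^\perp(\supp A)$, identify its dimension with $n(\supp A)$, and conclude linear dependence from the defining inequality $|A|>n(A)$ of $\mathcal{A}_0$. No differences worth noting.
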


\begin{proof}
Let $A\subseteq\mathcal{C}$.
For a representation $Q$ of $M$ and $C\in A$, denote by $q_C$ the circuit vector supported on $C$ as in Section \ref{sec: Oxley Wang construction}. If $A\in\mathcal{A}_0$, then we have \begin{equation*}
|A|>n(\supp A)=\dim(Q^\perp(\supp S))\geq\dim\operatorname{span}\{q_C : C\in A\}.
\end{equation*} Thus the vectors $\{q_C: C\in A\}$ are linearly dependent, so $A$ is dependent in $\delta_{OW}(Q)$.
\end{proof}

This shows that for matroids such that $\delta M=(\mathcal{C},\mathcal{A}_0)$, the combinatorial derived matroid is generic, in a precise sense. In particular, this applies to uniform matroids $U(k,n)$ with $k\geq n-3$, by Corollary~\ref{corollary: U(n-3,3)}. We also expect this to apply to many other naturally occurring matroids of high rank.
The simple Lemma~\ref{lm:A0generic} also shows that if $\delta M = (\mathcal{C},\mathcal{A}_0)$ we can characterize representations for which the Oxley--Wang derived matroid equals the combinatorial derived matroid from mere enumerative properties of $\delta_{OW}(Q)$.

\begin{corollary}\label{corollary:NumberOfDep}
If $\delta M=(\mathcal{C},\mathcal{A}_0)$ and $Q$ is a representation of $M$ such that $\delta_{OW}(Q)$ has the same number of dependent sets as $\delta M$, then $\delta_{OW}(Q)=\delta M$.
\end{corollary}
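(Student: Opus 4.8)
The goal is to prove Corollary~\ref{corollary:NumberOfDep}: assuming $\delta M = (\mathcal{C}, \mathcal{A}_0)$, if $Q$ is a representation of $M$ whose Oxley--Wang derived matroid $\delta_{OW}(Q)$ has the same number of dependent sets as $\delta M$, then $\delta_{OW}(Q) = \delta M$ as matroids on the common ground set $\mathcal{C}$.

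My plan is to argue by set inclusion followed by a cardinality-counting collapse. Let me denote by $\mathcal{A}_0$ the dependent sets of $\delta M$ and by $\mathcal{D}_{OW}$ the dependent sets of $\delta_{OW}(Q)$; both are families of subsets of the common ground set $\mathcal{C}$. The essential input is Lemma~\ref{lm:A0generic}, which under the hypothesis $\delta M = (\mathcal{C},\mathcal{A}_0)$ gives exactly the inclusion $\mathcal{A}_0 \subseteq \mathcal{D}_{OW}$: every set dependent in the combinatorial derived matroid is dependent in the Oxley--Wang derived matroid of any representation $Q$. First I would invoke this lemma to record that inclusion. Then the counting hypothesis says $|\mathcal{A}_0| = |\mathcal{D}_{OW}|$, i.e.\ the two finite families of dependent sets have the same number of members. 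A containment of finite sets together with equality of cardinalities forces equality of the sets, so $\mathcal{A}_0 = \mathcal{D}_{OW}$.

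The final step is to translate equality of the dependent-set families into equality of the matroids. Since a matroid on a fixed ground set is completely and uniquely determined by its collection of dependent sets (equivalently by its independent sets, as the complements), having $\mathcal{A}_0 = \mathcal{D}_{OW}$ as families of subsets of $\mathcal{C}$ means the two matroids coincide: $\delta_{OW}(Q) = \delta M$. This is immediate from Definition~\ref{definition: matroid}, where a matroid is presented as the pair $(E,\mathcal{D})$ of a ground set and its dependent sets.

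There is essentially no hard obstacle here — the corollary is a short deduction from Lemma~\ref{lm:A0generic}. The only point requiring any care is being explicit that Lemma~\ref{lm:A0generic} provides precisely the one-directional inclusion $\mathcal{A}_0 \subseteq \mathcal{D}_{OW}$ and not the reverse; it is the equal-cardinality hypothesis, not the lemma, that upgrades this inclusion to equality. One should also note that the ground sets genuinely agree (both equal $\mathcal{C}(M)$) so that comparing families of subsets and concluding equality of matroids is legitimate. Beyond these bookkeeping remarks, the proof is a one-line consequence: inclusion plus equal finite cardinality yields equality.
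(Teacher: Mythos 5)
Your proposal is correct and follows exactly the paper's own argument: invoke Lemma~\ref{lm:A0generic} for the inclusion $\mathcal{A}_0\subseteq\mathcal{D}(\delta_{OW}(Q))$, use equal finite cardinality to upgrade this to equality, and conclude since both matroids share the ground set $\mathcal{C}$. No gaps.
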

\begin{proof}
By Lemma~\ref{lm:A0generic} we have $\mathcal{D}(\delta M)=\mathcal{A}_0\subseteq\mathcal{D}(\delta_{OW}(Q))$, so since the two sets are finite, they must be equal if they have the same cardinality. Since $\delta M$ and $\delta_{OW}(Q)$ also have the same ground set $\mathcal{C}$, they must thus be equal.
\end{proof}

In general, if $M$ is representable, we conjecture that the combinatorial derived matroid is generic in two different ways; one categorical and one algebraic-geometric. In order to state Conjectures~\ref{conjecture: CategoricallyGeneric} and \ref{conjecture: GeometricallyGeneric}, we need some auxiliary standard definitions.

\begin{definition}
Let $E$ be a finite set. The {\em weak order} on the class $\mathcal{M}$ of matroids with ground set $E$ is the partial order with $M_1\geq M_2$ if every dependent set in $M_1$ is also dependent in $M_2$. 
\end{definition}

\begin{conjecture}\label{conjecture: CategoricallyGeneric}
Let M be a representable matroid with set of circuits $\mathcal{C}$. Then $\delta M\geq\delta_{OW}(R)$ for every representation $R$ of $M$, where the order relation is the weak order on the set of matroids on $\mathcal{C}$.
\end{conjecture}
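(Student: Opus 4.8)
The plan is to translate the statement into an inclusion and prove it by induction on depth. Write $\mathcal{D}'=\mathcal{D}(\delta_{OW}(R))$ for the dependent sets of $\delta_{OW}(R)$, and let $Q$ be the code represented by $R$. Unwinding the weak order, $\delta M\geq\delta_{OW}(R)$ is exactly the inclusion $\mathcal{A}\subseteq\mathcal{D}'$, i.e.\ every combinatorially dependent set of circuits is linearly dependent in $\delta_{OW}(R)$. Since $\mathcal{A}=\bigcup_{i\geq0}\mathcal{A}_i$, I would show $\mathcal{A}_i\subseteq\mathcal{D}'$ for all $i$ by induction. The base case $\mathcal{A}_0\subseteq\mathcal{D}'$ is precisely what the proof of Lemma~\ref{lm:A0generic} gives: for $A\in\mathcal{A}_0$ the circuit vectors $\{q_C:C\in A\}$ all lie in $Q^\perp(\supp A)$, so $\dim\operatorname{span}\{q_C:C\in A\}\leq n(\supp A)<|A|$ and they are linearly dependent. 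For the inductive step, assume $\mathcal{A}_i\subseteq\mathcal{D}'$. As $\mathcal{D}'$ is upward closed (axiom (D2) for $\delta_{OW}(R)$) and $\mathcal{A}_{i+1}={\uparrow}\epsilon(\mathcal{A}_i)$, it suffices to prove $\epsilon(\mathcal{A}_i)\subseteq\mathcal{D}'$: for $A_1,A_2\in\mathcal{A}_i$ with $A_1\cap A_2\notin\mathcal{A}_i$ and $C\in A_1\cap A_2$, the set $(A_1\cup A_2)\setminus\{C\}$ must be linearly dependent.

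The favourable sub-case is when $A_1\cap A_2$ is \emph{linearly independent} in $\delta_{OW}(R)$. Then the conclusion is exactly the exchange axiom (D3) applied inside the matroid $\delta_{OW}(R)$. Equivalently, writing $\nu$ for the nullity function of $\delta_{OW}(R)$, supermodularity of $\nu$ yields
\begin{equation*}
\nu\big((A_1\cup A_2)\setminus\{C\}\big)\geq\nu(A_1\cup A_2)-1\geq\nu(A_1)+\nu(A_2)-\nu(A_1\cap A_2)-1\geq 1,
\end{equation*}
so $(A_1\cup A_2)\setminus\{C\}$ is dependent. This mirrors, in $\delta_{OW}(R)$, the argument that $\mathcal{A}$ itself satisfies (D3) in Proposition~\ref{prop:dM_is_matroid}.

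The main obstacle is the remaining sub-case, where $A_1\cap A_2$ is combinatorially independent ($A_1\cap A_2\notin\mathcal{A}_i$, which is what triggers $\epsilon$) yet \emph{linearly dependent} in $\delta_{OW}(R)$. This is not excluded by the induction hypothesis: the statement being proved only gives, contrapositively, that linearly independent sets are combinatorially independent, and \emph{not} the converse, so a set can be combinatorially independent while linearly dependent. Here the nullity estimate degenerates and the abstract elimination genuinely fails. One still wins if $A_1\cap A_2$ contains a $\delta_{OW}(R)$-circuit $D$ with $C\notin D$, since then $D\subseteq(A_1\cup A_2)\setminus\{C\}$; but when \emph{every} $\delta_{OW}(R)$-circuit inside $A_1\cap A_2$ contains $C$, the conclusion can be false for a general matroid. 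For instance, with circuit vectors satisfying $q_a+q_b+q_c=0$ and $q_d,q_e$ chosen so that $\{q_b,q_c,q_d,q_e\}$ is independent, the sets $A_1=\{a,b,c,d\}$ and $A_2=\{a,b,c,e\}$ are both dependent with $C=a\in A_1\cap A_2$, while $(A_1\cup A_2)\setminus\{a\}=\{b,c,d,e\}$ is independent.

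Closing this gap is where new ideas are required, and is why the statement is posed as a conjecture. The natural route is to strengthen the inductive invariant so that it records enough of the linear geometry of $\delta_{OW}(R)$ to forbid the bad configuration; concretely, one would try to show that whenever $\epsilon$ fires on $A_1,A_2$ with $A_1\cap A_2$ linearly dependent, the resulting set $(A_1\cup A_2)\setminus\{C\}$ can instead be produced by a chain of $\epsilon$-operations all of whose intermediate intersections are linearly independent, thereby reducing everything to the favourable sub-case handled by (D3). Making such a re-derivation precise—reconciling the combinatorial firing condition $A_1\cap A_2\notin\mathcal{A}_i$ with the linear firing condition $A_1\cap A_2\notin\mathcal{D}'$ uniformly across all depths—is the crux, and the part I expect to be genuinely hard.
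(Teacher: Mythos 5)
This statement is posed as a conjecture in the paper, and the paper does not prove it: the only established case is when $\delta M=(\mathcal{C},\mathcal{A}_0)$, which is exactly Lemma~\ref{lm:A0generic}. Your proposal is therefore not a proof and does not claim to be one, but your analysis is sound and lines up precisely with what the paper knows. Your base case $\mathcal{A}_0\subseteq\mathcal{D}(\delta_{OW}(R))$ is the paper's Lemma~\ref{lm:A0generic} verbatim (circuit vectors of $A$ live in $Q^\perp(\supp A)$, whose dimension $n(\supp A)$ is less than $|A|$), and your favourable sub-case --- applying (D3), or equivalently supermodularity of the nullity of $\delta_{OW}(R)$, when $A_1\cap A_2$ is linearly independent --- is correct.

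The gap you identify is genuine and is the reason the statement remains a conjecture: the inductive hypothesis $\mathcal{A}_i\subseteq\mathcal{D}'$ only says combinatorial dependence implies linear dependence, so the combinatorial firing condition $A_1\cap A_2\notin\mathcal{A}_i$ does not deliver the linear independence of $A_1\cap A_2$ needed to invoke (D3) in $\delta_{OW}(R)$, and abstract circuit elimination with a dependent intersection can fail, as your small linear example illustrates. Two cautions. First, your example shows only that the \emph{argument} breaks, not that the conjecture is false; whether such a configuration can actually be realised by circuit vectors of a representation, with $A_1\cap A_2$ simultaneously combinatorially independent in $\delta M$, is exactly the open question. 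Second, your proposed repair --- re-deriving $(A_1\cup A_2)\setminus\{C\}$ through a chain of $\epsilon$-steps whose intermediate intersections are linearly independent --- is plausible but unsupported; the paper instead points toward the weak-order techniques of~\cite{jackson2021} as a possible route. In short: your write-up is an accurate diagnosis of the difficulty rather than a proof, and as such it neither exceeds nor falls short of what the paper itself establishes for this statement.
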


Note that Lemma~\ref{lm:A0generic} proves Conjecture~\ref{conjecture: CategoricallyGeneric} in the case that $\delta M=(\mathcal{C},\mathcal{A}_0)$. It is conceivable that Conjecture~\ref{conjecture: CategoricallyGeneric} may be approached with techniques similar to those in~\cite{jackson2021}. There, the authors studied conditions under which the collection a matroids with some circuits prescribed have a maximal element in the weak order. 

The next example shows that it is possible that $M$ is representable over a field $\F$, without any of the representations of $M$ over $\F$ (or indeed over any extension field of $\F$) yielding $\delta M$ as an Oxley-Wang derived matroid.

\begin{example}
Consider the graphical matroid $M=M(K_4)$ as in Example~\ref{ex:K4}. Recall that $\delta M$ is the non-Fano matroid $F_7^-$, with dependent sets $\mathcal{A}=\mathcal{A}_0$.

We will now look at Oxley--Wang derived matroids of $M(K_4)$. By Lemma~\ref{lm:A0generic}, all sets in $\mathcal{A}=\mathcal{A}_0$ are dependent in $\delta_{OW}(R)$ for any representation over any field. Moreover, a triple of circuit vectors can only be linearly dependent if every element in $E$ occurs in at least two of the circuits, which follows from Lemma~\ref{lemma: hanging in A0}.
The only possible dependent set in $\delta_{OW}(R)$, except for the sets in $\mathcal{A}$, is therefore the triple of $4$-circuits $\{1346, 1256, 2345\}$. It is straightforward to see that these circuits are linearly dependent in the (projectively unique) representation of $M$ over a field $\F$ or characteristic~$2$, but not in a representation over fields of characteristic $\neq 2$. This can intuitively be understood if we interpret a linear combination of the circuit vectors supported on the circuits $\{1346, 1256, 2345\}$ as a formal sum of the cycles $\{abcd, abdc, acbd\}$ in the graph. In order for such a formal sum to take the value zero on the edge $6=cd$, the coefficient in front of the (directed) cycles $abcd$ and $abdc$ must be the same, say $\alpha$. But then the formal sum of cycles traverses the edge $ab$ $2\alpha$ times {\em in the same direction}, and so it can only be zero if $2\alpha=0$, {\em i.e.,} if $\mathrm{char}(\F)=2$. 
    
Thus, we see that $\delta_{OW}(R)$ is the Fano matroid in characteristic $2$, and the non-Fano matroid in characteristic $\neq 2$. In particular, we get $\delta(M)=\delta_{OW}(R)$ for a representation $R$ of $M$ if and only if the representation is in characteristic $\neq 2$. \end{example}

For algebraic-geometric notions of genericity, fix a field $\F$ and let $\overline{\F}$ be its algebraic closure. Assume that $M$ is representable over $\F$, with $|E(M)|=n$ and $r(M)=k$. We can then define the variety $V(M)=V_\F(M)\subseteq \overline{\F}^{k\times n}$ of matrices that represent $M$. This is clearly an algebraic variety, since it is defined by the vanishing of certain finitely many minors and the non-vanishing of finitely many others. We also have a variety $V(N)\subseteq \overline{\F}^{(n-k)\times |\mathcal{C}|}$ for every possible $\overline{\F}$-representable matroid $N$ on the ground set $\mathcal{C}=\mathcal{C}(M)$. Then the map $$R\mapsto \delta(R)$$ subdivides $V(M)$ into components $$V_N(M)=\{R\in V(M) : \delta_{OW}(R)=N\},$$ for different possible Oxley--Wang derived matroids $N$ of $M$. From this perspective, genericity is captured by the following conjecture.

\begin{conjecture}\label{conjecture: GeometricallyGeneric}
For every matroid $N\neq \delta(M)$ on $\mathcal{C}$, if $V_N(M)$ is non-empty, then it has positive codimension in $V(M)$.
\end{conjecture}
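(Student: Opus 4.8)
The plan is to read Conjecture~\ref{conjecture: GeometricallyGeneric} as a semicontinuity statement and reduce it to identifying the \emph{generic} Oxley--Wang derived matroid on $V(M)$ with the combinatorial derived matroid $\delta M$. First I would set up the algebraic dictionary. For each circuit $C\in\mathcal{C}$ the coordinates of the circuit vector $q_C=q_C(R)$ are, up to a common scalar, given by appropriate maximal minors of the columns of $R$ indexed by $C$, and hence are polynomial in the entries of $R$. Since linear dependence of $\{q_C(R):C\in A\}$ is invariant under rescaling the individual $q_C$, the condition that a set $A\subseteq\mathcal{C}$ be dependent in $\delta_{OW}(R)$ is well defined and equals the simultaneous vanishing of the $|A|\times|A|$ minors of the matrix $[\,q_C(R)\,]_{C\in A}$; it is therefore Zariski closed in $R$, while independence is open. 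Thus $R\mapsto\delta_{OW}(R)$ is a constructible function, and on each irreducible component $Z$ of $V(M)$ there is a well-defined generic derived matroid $N_Z$, attained on a dense open subset $U_Z\subseteq Z$. Because dependence is closed, a set dependent at a point of $U_Z$ is dependent on all of $Z$; hence $N_Z\ge\delta_{OW}(R)$ in the weak order for every $R\in Z$, so $N_Z$ is the weak-order maximum of $\{\delta_{OW}(R):R\in Z\}$.

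Next I would carry out the reduction. The locus $V_N(M)$ is locally closed, and an elementary dimension count shows that $\dim V_N(M)=\dim V(M)$ forces $V_N(M)$ to be dense in some top-dimensional component $Z$, whence $N=N_Z$; for every other $N$ the set $V_N(M)$ has positive codimension. Therefore Conjecture~\ref{conjecture: GeometricallyGeneric} is \emph{equivalent} to the equality $N_Z=\delta M$ for every irreducible component $Z$ of $V(M)$ of maximal dimension. I would establish this by two inclusions of dependent sets. For $\mathcal{D}(\delta M)\subseteq\mathcal{D}(N_Z)$ I would invoke Conjecture~\ref{conjecture: CategoricallyGeneric} (or Lemma~\ref{lm:A0generic} in the case $\delta M=(\mathcal{C},\mathcal{A}_0)$): since $\delta M\ge\delta_{OW}(R)$ in the weak order for every representation $R$ and $N_Z=\delta_{OW}(R)$ for generic $R\in Z$, every set dependent in $\delta M$ is dependent in $N_Z$. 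The reverse inclusion $\mathcal{D}(N_Z)\subseteq\mathcal{D}(\delta M)$ is, by openness of independence, equivalent to the \emph{realizability} statement: whenever $A\subseteq\mathcal{C}$ is independent in $\delta M$, there exists $R\in Z$ with $A$ independent in $\delta_{OW}(R)$; for then $A$ is independent on a dense open subset of $Z$ and hence independent in $N_Z$.

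The hard part will be this realizability step, which is exactly where the purely combinatorial construction of $\delta M$ must be matched against the algebra of representations. The natural strategy is to show that the only forced linear dependences among circuit vectors are the combinatorial ones generated by the operation $\epsilon$ from the nullity bound~\eqref{eq:dimcond2}, so that for any minimal dependent set $B$ of $N_Z$ not lying in $\mathcal{B}=\mathcal{C}(\delta M)$ the corresponding maximal minor of $[\,q_C(R)\,]_{C\in B}$ does not vanish identically on $Z$, and thus cuts out a proper subvariety. The example of $M(K_4)$ in characteristic two shows that identical vanishing can genuinely occur over special fields, so any argument must use that representations are taken over the algebraically closed field $\overline{\F}$ and that the offending minor is not a consequence of the defining equations of $V(M)$. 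A secondary obstacle is reducibility of the realization space: one must ensure $N_Z=\delta M$ on \emph{every} top-dimensional component, which is automatic once the realizability step is carried out componentwise but could otherwise fail through Mn\"ev-type pathologies; assuming $V(M)$ irreducible (as in the examples treated above) removes this difficulty. I expect that the methods of~\cite{jackson2021} on weak-order-maximal matroids with prescribed circuits are the right tool both for the realizability step and for controlling the components.
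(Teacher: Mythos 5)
The statement you are trying to prove is Conjecture~\ref{conjecture: GeometricallyGeneric}; the paper offers no proof of it and leaves it explicitly open, so there is no argument of the authors to compare yours against. Your set-up is sound and almost certainly the intended one: dependence of a set of circuit vectors is a Zariski-closed condition in $R$, so $R\mapsto\delta_{OW}(R)$ is constructible, each top-dimensional irreducible component $Z$ of $V(M)$ carries a generic derived matroid $N_Z$ that is weak-order maximal on $Z$, and the conjecture is equivalent to $N_Z=\delta M$ for every such $Z$. But from that point on your argument is a reduction, not a proof. The inclusion $\mathcal{D}(\delta M)\subseteq\mathcal{D}(N_Z)$ is exactly Conjecture~\ref{conjecture: CategoricallyGeneric}, which is itself open (Lemma~\ref{lm:A0generic} covers only the case $\delta M=(\mathcal{C},\mathcal{A}_0)$; for dependent sets produced by iterating $\epsilon$ beyond $\mathcal{A}_0$ there is no known mechanism forcing a linear dependence in every representation). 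The reverse inclusion --- realizability in each component of every set that is independent in $\delta M$ --- you explicitly leave as ``the hard part,'' and that is where all the content lies.

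Moreover, the realizability step as you formulate it is false without a restriction on the field. For $M=M(K_4)$ over any field of characteristic two, the triple $\{1346,1256,2345\}$ is independent in $\delta M=F_7^-$ but dependent in $\delta_{OW}(R)$ for \emph{every} representation $R$, including over the algebraic closure: passing to $\overline{\F}$ does not change the characteristic, so your remark that the argument ``must use that representations are taken over $\overline{\F}$'' does not rescue it. In that case $V(M)$ is a single orbit (hence irreducible), $V_{F_7}(M)=V(M)$ has codimension zero, and $F_7\neq\delta M$, so the conjecture as literally stated already fails there; any proof must begin by identifying and imposing the right hypothesis on $\F$ (characteristic zero, or large enough characteristic), which neither the paper nor your proposal does. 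The Mn\"ev-type reducibility issue you flag is real but secondary; the primary gaps are the two unproven inclusions and the missing field hypothesis.
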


Note that it is possible that $V_N(M)$ is empty over certain ground fields $\F$. Example 5.5 shows that this is true, in particular, when $M=M(K_4)$ and $\F$ is the two element field.

Since there are only finitely many possible Oxley--Wang derived matroids, this would imply that $V_{\delta(M)}(M)$ would be a full-dimensional subvariety of $V(M)$. In particular, assume that $\F$ is a finite field and that $\F'/\F$ is a finite field extension whose degree $[\F':\F]$ tends to infinity. Conjecture~\ref{conjecture: GeometricallyGeneric} would then imply that asymptotically almost all representations $R$ of $M$ over $\F'$ would satisfy $\delta_{OW}(R)=\delta(M)$.

\subsection{Adjoint}\label{sec:AdjointComp}
The construction of the combinatorial derived matroid is iterative. If a set of circuits of $M$ satisfies the nullity condition \eqref{eq:A0 definition} in Definition \ref{def:combinatorial derived matroid}, the set is in $\mathcal{A}_0$ hence dependent. If not, the only way to check its (in)dependence is by constructing iteratively all dependent sets of $\delta M$ and see if our set is one of them.

The fact that there are sets for which (in)dependence is hard to check, means it is also difficult to define a closure function of $\delta M$. We are not optimistic about finding a definition of closure in $\delta M$ without some form of iteration.

As a result of the lack of closure in $\delta M$, we have not managed to compare the definition of $\delta M$, that uses dependent sets, to that of the adjoint (Section \ref{sec:adjoint}), that uses closed sets. We expect the following to be true.

\begin{conjecture}
If an adjoint of a matroid $M$ exists, one of them is equal to the combinatorial derived matroid of the dual. Moreover, every adjoint of $M$ is less $\delta M^*$ in the weak order on matroids with ground set $\mathcal{C}$.
\end{conjecture}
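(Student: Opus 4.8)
The plan is to prove the two assertions together, using the order-reversing geometry of the adjoint as the single engine. Fix a simple matroid $M$ of rank $k$ on $E$ with $|E|=n$, write $L=\mathcal{F}(M)$ for its lattice of flats, assume an adjoint $L^\Delta$ exists, and let $\mathrm{adj}$ be the matroid with $\mathcal{F}(\mathrm{adj})=L^\Delta$. The first step is bookkeeping on ground sets: the points of $L^\Delta$ are in bijection with the coatoms of $L$, i.e. the hyperplanes of $M$, while the ground set of $\delta M^*$ is $\mathcal{C}(M^*)$, the cocircuits of $M$. Since a cocircuit is exactly the complement of a hyperplane, $H\mapsto E\setminus H$ is the canonical bijection along which I compare $\mathrm{adj}$ and $\delta M^*$. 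The central computation is then the following. For $A=\{C_1^*,\dots,C_m^*\}\subseteq\mathcal{C}(M^*)$ with hyperplanes $H_i=E\setminus C_i^*$, one has $\supp(A)=E\setminus\bigcap_i H_i$, so the dual nullity in Definition~\ref{def:combinatorial derived matroid sets} is $n(\supp A)=r(M)-r_M(\bigcap_i H_i)=r(L)-r_L(\bigwedge_i H_i)$, the meet taken in $L$. As $\psi$ is order-reversing, $\bigvee_i\psi(H_i)\le\psi(\bigwedge_i H_i)$ in $L^\Delta$, and Lemma~\ref{lemma: adjoint properties}(2) gives $r_{L^\Delta}(\psi(\bigwedge_i H_i))=r(L)-r_L(\bigwedge_i H_i)$. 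Combining these yields the master inequality
\[ r_{\mathrm{adj}}(A)=r_{L^\Delta}\Big(\textstyle\bigvee_i\psi(H_i)\Big)\le n(\supp A)\qquad\text{for every }A\subseteq\mathcal{C}(M^*). \]

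From the master inequality the base case is immediate: if $A\in\mathcal{A}_0$ then $|A|>n(\supp A)\ge r_{\mathrm{adj}}(A)$, so $A$ is dependent in $\mathrm{adj}$, giving $\mathcal{A}_0\subseteq\mathcal{D}(\mathrm{adj})$. To upgrade this to $\mathcal{A}\subseteq\mathcal{D}(\mathrm{adj})$ — which is precisely $\delta M^*\ge\mathrm{adj}$ in the weak order — I would show by induction on depth that $\mathcal{A}_i\subseteq\mathcal{D}(\mathrm{adj})$. The operation ${\uparrow}$ is harmless since $\mathcal{D}(\mathrm{adj})$ is upward closed. For $\epsilon$, take $A_1,A_2\in\mathcal{A}_i$ with $A_1\cap A_2\notin\mathcal{A}_i$ and $C\in A_1\cap A_2$; as $A_1,A_2\in\mathcal{D}(\mathrm{adj})$, axiom (D3) for $\mathrm{adj}$ splits into two cases. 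When $A_1\cap A_2\notin\mathcal{D}(\mathrm{adj})$, (D3) directly delivers $(A_1\cup A_2)\setminus\{C\}\in\mathcal{D}(\mathrm{adj})$, closing the step.

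The remaining case $A_1\cap A_2\in\mathcal{D}(\mathrm{adj})$ is the main obstacle. Here $A_1\cap A_2$ is dependent in $\mathrm{adj}$ yet not even generated in $\delta M^*$ — a situation fully consistent with $\mathrm{adj}$ carrying strictly more dependencies, but fatal to the naive recursion, since (D3) and pure submodularity of $r_{\mathrm{adj}}$ give only $r_{\mathrm{adj}}((A_1\cup A_2)\setminus\{C\})\le|(A_1\cup A_2)\setminus\{C\}|$, one short of dependence. The way I would attack it is to pass to the circuit-level construction of Proposition~\ref{proposition: construction of circuits}, where the sets to be combined are genuine circuits $B_1,B_2$ of $\delta M^*$ whose intersection is a \emph{proper subset of a circuit}; one then only needs to handle $\mathrm{adj}$-circuits contained in $B_1\cap B_2$. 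If some such circuit avoids $C$ it survives in $(B_1\cup B_2)\setminus\{C\}$ and we win, so the genuinely hard configuration is that \emph{every} $\mathrm{adj}$-circuit inside $B_1\cap B_2$ passes through $C$. I would try to exclude this by strengthening the inductive hypothesis to carry the master inequality jointly with the minimality data, forcing the recursion of $\delta M^*$ to track the lattice geometry of $L^\Delta$. I expect this reconciliation of the combinatorial iteration with the global adjoint structure to be exactly where new ideas are required, and it is the reason the statement is posed as a conjecture rather than a theorem.

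For the existence half — that one adjoint equals $\delta M^*$ — the inequality does most of the work. Once $\delta M^*\ge\mathrm{adj}$ is known, weak-order monotonicity gives $r(\delta M^*)\ge r(\mathrm{adj})=r(L)=k$, while Lemma~\ref{lemma: rank derived} applied to $M^*$ gives $r(\delta M^*)\le n-r(M^*)=k$; hence $r(\delta M^*)=k$, matching the adjoint rank. It then remains to realize $\delta M^*$ itself as an adjoint: for a flat $X$ of $M$ I would set $\psi(X)$ to be the closure in $\delta M^*$ of the points corresponding to the hyperplanes $H\supseteq X$, and verify the injectivity and point-bijection requirements of Definition~\ref{definition: adjoint} together with the three clauses of Lemma~\ref{lemma: adjoint properties}. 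The decisive identity is the rank formula $r_{\delta M^*}(\psi(X))=k-r_M(X)$, where the exact value $r(\delta M^*)=k$ and the master inequality must be combined; I expect this verification to inherit precisely the difficulty encountered in the inductive step above.
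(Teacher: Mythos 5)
This statement is posed as a conjecture in the paper precisely because the authors could not prove it (they say explicitly that the lack of a non-iterative closure description of $\delta M$ prevented them from comparing it to the adjoint), so there is no proof in the paper to compare against; your attempt must stand on its own, and it does not close the conjecture. The parts you do establish are correct and worth isolating as a lemma: the identification of $\supp(A)$ with $E\setminus\bigcap_i H_i$, the computation $n_{M^*}(\supp A)=r(M)-r_M\bigl(\bigwedge_i H_i\bigr)$, and the resulting inequality $r_{\mathrm{adj}}(A)\le n_{M^*}(\supp A)$ via order-reversal of $\psi$ and Lemma~\ref{lemma: adjoint properties}(2). This gives $\mathcal{A}_0\subseteq\mathcal{D}(\mathrm{adj})$, which is the exact adjoint analogue of Lemma~\ref{lm:A0generic}, and it yields the rank consequence $r(\delta M^*)=r(M)$ conditional on the full weak-order inequality.

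The genuine gap is the one you name yourself, and it is fatal to the argument as written: when propagating dependence through the operation $\epsilon$, if $A_1\cap A_2\notin\mathcal{A}_i$ but $A_1\cap A_2\in\mathcal{D}(\mathrm{adj})$, axiom (D3) for $\mathrm{adj}$ gives you nothing about $(A_1\cup A_2)\setminus\{C\}$, and submodularity of $r_{\mathrm{adj}}$ falls exactly one short of dependence. Your proposed repair --- passing to the circuit-level recursion of Proposition~\ref{proposition: construction of circuits} and arguing about $\mathrm{adj}$-circuits inside $B_1\cap B_2$ that pass through $C$ --- is only a plan; no mechanism is given to exclude the bad configuration, and the paper's own Example~\ref{example: Vamos hanging} is a warning that intuitions about how dependence propagates through $\epsilon$ can fail. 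The second half of the conjecture (that $\delta M^*$ is itself realized as an adjoint) is likewise only sketched: the map $\psi(X)=\mathrm{cl}_{\delta M^*}\{H : H\supseteq X\}$ is a reasonable candidate, but the injectivity of the induced lattice embedding and the decisive identity $r_{\delta M^*}(\psi(X))=r(M)-r_M(X)$ are exactly where the absence of a workable closure operator for $\delta M$ bites, and you do not verify either. So the proposal should be read as correct partial progress plus an accurate diagnosis of the obstruction, not as a proof.
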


\section{Suggestions for further research}\label{sec: further research}

We have developed the notion of the combinatorial derived matroid. Contrary to earlier constructions for a derived matroid, our construction exist for all matroids and is unique.

There are many open questions about our construction. Apart from the questions and conjectures already stated in the text, we list here some more ideas for further research.

\begin{question}
Is there a characterization of the class of matroids $N$ that are isomorphic to the combinatorial derived matroid $\delta M$ of some matroid $M$? 
By Proposition 4.12, a necessary condition is that every element in $N$ that is contained in a circuit is also contained in a circuit of size $3$. Many more necessary conditions like this should probably first be obtained, before a complete characterization is feasible.
\end{question}

\begin{question}
For which matroids does it hold that $\delta M=M$ or $\delta M=M^*$? The latter property may be more natural, since the rank of $\delta M$ is always equal to the rank of $M^*$. It is also possible to repeat the procedure of taking the combinatorial derived matroid of the dual multiple times, i.e. study the sequence $$M, \delta(M^*), \delta(\delta(M^*)^*),\dots.$$
Does this sequence converge? For earlier constructions of the derived matroid, these questions have been studied in~\cite[Section 3]{oxley2019dependencies} and~\cite[Section 4]{jurrius2015coset}, see also~\cite{CrapoBlog}. 
\end{question}

\begin{question}
Is there any relation between $\delta M$ and $\delta M^*$?
\end{question}

\begin{question}
There are matroids that do not have an adjoint, whereas the combinatorial derived matroid always exists. This means one of the properties in Definition~\ref{definition: adjoint} and/or Lemma~\ref{lemma: adjoint properties} is not in general satisfied by the combinatorial derived matroid. Which one?
\end{question}

\begin{question}
If $X$ is a representation of $M$ by topological spheres, as in \cite{swartz2002toprep, anderson2009toprep}, does $X$ induce a representation $\delta X$ of $\delta M$ by topological spheres?
\end{question}

\section*{Acknowledgments} Kuznetsova was partially supported by the Academy of Finland Grant 323416. The authors would like to thank the anonymous reviewer for their careful reading of the manuscript and may useful suggestions for improvement. Furthermore we thank Santiago G\'uzman Pro for providing Example \ref{example: Vamos hanging}, a counterexample to a statement in an earlier version of this paper.

\bibliographystyle{plain}
\bibliography{references}

\begin{thebibliography}{10}

\bibitem{Alfter}
Marion Alfter, Walter Kern, and Alfred Wanka.
\newblock On adjoints and dual matroids.
\newblock {\em Journal of Combinatorial Theory, Series B}, 50:208--213, 1990.

\bibitem{anderson2009toprep}
Laura Anderson.
\newblock Homotopy sphere representations for matroids.
\newblock {\em Annals of Combinatorics}, pages 189--202, 2012.

\bibitem{cheung}
Alan~L Cheung.
\newblock Adjoints of a geometry.
\newblock {\em Canadian Mathematical Bulletin}, 17(3):363--365, 1974.

\bibitem{coullard}
Collette Coullard and Lisa Hellerstein.
\newblock Independence and port oracles for matroids, with an application to
  computational learning theory.
\newblock {\em Combinatorica}, 16, 02 1998.

\bibitem{crapo:2009}
Henry Crapo.
\newblock An algebra of pieces of space --- {H}ermann {G}rassmann to {G}ian
  {C}arlo {R}ota.
\newblock In Ernesto Damiani, Ottavio D'Antona, Vincenzo Marra, and Fabrizio
  Palombi, editors, {\em From Combinatorics to Philosophy}, pages 61--90,
  Boston, MA, 2009. Springer US.

\bibitem{Crapo}
Henry Crapo and William Schmitt.
\newblock The {W}hitney algebra of a matroid.
\newblock {\em Journal of Combinatorial Theory, Series A}, 91:215--263, 2000.

\bibitem{MR4246979}
Ragnar Freij-Hollanti and Olga Kuznetsova.
\newblock Information hiding using matroid theory.
\newblock {\em Adv. in Appl. Math.}, 129:102205, 18, 2021.

\bibitem{jackson2021}
Bill Jackson and Shin-ichi Tanigawa.
\newblock Maximal matroids in weak order posets, 2021.

\bibitem{jurrius2015coset}
Relinde Jurrius and Ruud Pellikaan.
\newblock The coset leader and list weight enumerator.
\newblock {\em Contemporary Mathematics}, 632:229--251, 2015.

\bibitem{CrapoBlog}
Joseph Kung.
\newblock The geometry of cocircuits and circuits. a tombeau for {H}enry
  {C}rapo 1933--2019.
\newblock Blog post, 2020.

\bibitem{longyear1980circuit}
Judith~Q Longyear.
\newblock The circuit basis in binary matroids.
\newblock {\em Journal of Number Theory}, 12(1):71--76, 1980.

\bibitem{oxley2011matroid}
James Oxley.
\newblock {\em Matroid theory}, volume~21 of {\em Oxford Graduate Texts in
  Mathematics}.
\newblock Oxford University Press, Oxford, second edition, 2011.

\bibitem{Oxley1996}
James Oxley, Dirk Vertigan, and Geoff Whittle.
\newblock On inequivalent representations of matroids over finite fields.
\newblock {\em Journal of Combinatorial Theory, Series B}, 67:325--343, 1996.

\bibitem{oxley2019dependencies}
James Oxley and Suijie Wang.
\newblock Dependencies among dependencies in matroids.
\newblock {\em The Electronic Journal of Combinatorics}, page P3.46, 2019.

\bibitem{Rota}
Gian-Carlo Rota.
\newblock Bowdoin college summer 1971 {NSF} conference on combinatorics, 1971.

\bibitem{swartz2002toprep}
Edward Swartz.
\newblock Topological representations of matroids.
\newblock {\em Journal of the American Mathematical Society}, pages 427--442,
  2002.

\end{thebibliography}

\newpage
\appendix
\section{Vamos matroid}\label{appendix: vamos}
Let $M$ be the V\'{a}mos matroid. It is a non-representable and non-algebraic matroid of rank $4$ on the ground set $[8]$. All sets of three or fewer elements are independent and among the $70$ sets of four elements only the $5$ depicted as gray rectangles in Figure~\ref{fig:Vamos matroid} are circuits. 

\smallskip
\textbf{The ground set of $\delta M$.} First we calculate the number of elements of the ground set of $\delta M$, that is, the number of circuits in $M$. There are 5 circuits of size 4 in $M$, and of the $\binom{8}{5}=56$ sets of size 5 there are $5\cdot4=20$ that contain a circuit of size 4. This gives $(56-20)+5=41$ circuits in $M$, hence $\delta M$ is a matroid on $41$ elements.

\smallskip
\textbf{The elements of $\mathcal{A}_0$.} To construct $\mathcal{A}_0$, first note that all subsets of $\mathcal{C}$ of size $0$, $1$ and $2$ are not contained in $\mathcal{A}_0$ by Lemma \ref{lemma: delta M is simple}. Moreover, all sets of size at least $5$ are in $\mathcal{A}_0$, since $n(M)=4$. For a $3$-set $A\subseteq\mathcal{C}$ to be contained in $\mathcal{A}_0$, we need that $n(A)\leq 2$, so $|\supp A| =6$. There are 290 such sets in $\mathcal{A}_0$. 

A 4-set $A$ is in $\mathcal{A}_0$ if and only if its support is of size at most 7. Motivated by Proposition~\ref{proposition: construction of circuits}, we would like to understand the minimal elements of cardinality 4 in $\mathcal{A}_0$. If $|\supp(A)|=6$, any 3-set subset $A'$ of $A$ satisfies $|\supp(A')|=6$ because any 3-set of circuits has support at least 6. But then $A'$ is a 3-set on support of size 6 and rank 4, so $A'$ is in $\mathcal{A}_0$. Hence, to include the minimal 4-sets of $\mathcal{A}_0$, we can require that 
$|\supp(A)|=7$, so $n(A)=3$ and $|A|-n(A)=1$. There are 14656 minimal 4-sets in $\mathcal{A}_0$.

\smallskip
\textbf{The sets $\mathcal{A}_0$ vs $\mathcal{A}$.} 
We will show that $\mathcal{A}\neq \mathcal{A}_0$. Let $A_1=\{adef,bcef,$ $abcd\}$ and $A_2=\{adgh,bcgh,abcd\}$. Then $n(A_1)=n(A_2)=6-4=2$ and $|A_1|=|A_2|=3$, so $A_1 \in \mathcal{A}_0$ and $A_2 \in \mathcal{A}_0$. Consider the set $A=(A_1 \cup A_2)-(A_1 \cap A_2)$. We have $n(A)=8-4=4$ and $|A|=4$, so $A \notin \mathcal{A}_0$. The set $A_1 \cap A_2$ is a singleton, so $A_1 \cap A_2 \notin \mathcal{A}_0$. Hence, $A \in \mathcal{A}_1 \subseteq \mathcal{A}$.

\smallskip
\textbf{The elements of $\Delta\mathcal{A}_{0}$.} All elements of $\Delta\mathcal{A}_{0}$ have cardinality 4. This is because all sets of cardinality at least 5 are included in $\mathcal{A}_{0}$. On the other hand, the set $\Delta\mathcal{A}_{0}$ does not contain any $3$-sets. This is because if $|A|=3$ and $A \in \mathcal{A}_1-\mathcal{A}_0$, then $A=A_1 \cup A_2 \backslash \{C\}$ with $C \in A_1 \cap A_2$, $|A_1 \cap A_2|=2$, $A_1, A_2$ in $\mathcal{A}_0$ and $|A_1|=|A_2|=3$. For any distinct $C, C'$ in $\mathcal{C}$, $|C \cup C'|\geq6$. On the other hand, $|\supp A_1|=|\supp A_2|=6$, so $\supp A_1=\supp (A_1 \cap A_2)=\supp A_2=\supp (A_1 \cup A_2)$. Hence,
\[6 \leq |\supp A|\leq |\supp A_1 \cup A_2| =6,\]
that is, $A$ is a $3$-set supported on $6$ elements of $E$, so $A \in \mathcal{A}_0$.

Next we analyse the 4-sets inside $\Delta\mathcal{A}_{0}$. First, consider 
\[\Delta\mathcal{A}_{03}:=\{A \in \Delta\mathcal{A}_{0}: A=A_1\cup A_2 \backslash \{C\}\text{ with } |A|=4,\ |A_1|=|A_2|=3\} .\]
All $A \in \Delta\mathcal{A}_{03}$ have cardinality $4$ and $|\Delta\mathcal{A}_{03}|=5949$. Of those, 373 have support of size 8, 5416 have support of size 7 and 160 sets have support of size 6. A further check of confirms that all of the elements in $\Delta\mathcal{A}_{03}$ that have support of size 8 are circuits in $\delta M$. The remaining sets have support of size at most 7, so they are contained in $\mathcal{A}_{0}$. 

The elements of $\Delta\mathcal{A}_{03}$ that have support of size 6 cannot be minimal because every 3-subset of such 4-sets is contained in $\mathcal{A}_0$. The supports avoiding the ``edges'' $ab$ and $cd$ and the ``diagonals'' $ac$ and $bd$ of the circuit $abcd$, appear in 15 elements of $\Delta\mathcal{A}_{03}$ each. The supports avoiding $ad$, $bc$, $ef$ and $gh$ do not appear in $\Delta\mathcal{A}_{03}$. For each remaining 6-subset $S$ of $E$ there are 5 elements of $\Delta\mathcal{A}_{03}$ supported on $S$. 

Similar symmetries happen among the elements of $\Delta\mathcal{A}_{03}$ that have support of size 7. The elements of $\{a,b,c,d\}$ are avoided 847 times each whereas the elements of $\{e,f,g,h\}$ are avoided 507 times each. 

Let us now consider the set
\[\Delta\mathcal{A}_{034}:=\{A \in \Delta\mathcal{A}_{0}: A=A_1\cup A_2 \backslash \{C\}\text{ with } |A|=4,\ |A_1|=3, \ |A_2|=4\} .\]
We claim that $\Delta\mathcal{A}_{034}=\emptyset$. This is because the assumption that $|A_1|=3$, $|A_2|=4$ and $|A|=4$ implies that $|A_1\cap A_2|=2$. Furthermore, since $A_1 \in \mathcal{A}_0$, $\supp A_1= \supp (A_1\cap A_2)\subseteq \supp A_2 $, so $\supp A =\supp A_2$ and $n(A)=n(A_2)$, so $A \in \mathcal{A}_0$.

Next, consider
\[\Delta\mathcal{A}_{04}:=\{A \in \Delta\mathcal{A}_{0}: A=A_1\cup A_2 \backslash \{C\}\text{ with } |A|=|A_1|=|A_2|=4\} .\]
Again, we claim that $\Delta\mathcal{A}_{04}=\emptyset$. The assumption $|A|=|A_1|=|A_2|$ implies that $|A_1 \cap A_2|=3$. If $|\supp (A_1 \cap A_2)|=6$ then $(A_1 \cap A_2) \in\mathcal{A}_0$, so $A \notin \Delta\mathcal{A}_{0}$. Then because $A_1, A_2 \in \mathcal{A}_0$, $7=\supp A_1=\supp (A_1 \cap A_2)=\supp A_2=\supp (A_1 \cup A_2)$. This means that $|\supp A|=7$, so $A \in \mathcal{A}_0$. 

\smallskip
\textbf{3-sets and 4-sets in $\mathcal{A}$.} Direct computation shows that the properties that $\Delta\mathcal{A}_{034}=\emptyset$ and  $\Delta\mathcal{A}_{04}=\emptyset$ depend on condition~\eqref{eq:A0 definition}. In fact, one can already generate new 4-sets in $\Delta \mathcal{A}_1$ by taking $A_1 \in \mathcal{A}_1$ and $A_2 \in \Delta \mathcal{A}_0$ with $3 \leq |A_1|\leq 4$ and $|A_2|=4$. 

At the same time, if a 3-set $A$ is an element of $\mathcal{A}$, then $A \in \mathcal{A}_0$. This is because if $A \in \mathcal{A}-\mathcal{A}_0$, then Lemma~\ref{lemma: delta M is simple} implies that $A=(A_1 \cup A_2)\backslash \{C\} $ for some 3-sets $A_1$ and $A_2$, so if $A \in \mathcal{A}_i-\mathcal{A}_{i-1}$ for some $i$, then  $A \in \Delta \mathcal{A}_{i-1}$ and $A_1,A_2 \in \mathcal{A}_{i-1}$. We will prove that $A \in \mathcal{A}_0$ by induction on $i$. We have already shown that $A \notin \Delta \mathcal{A}_0$. So let us assume that if a 3-set $A' \in \mathcal{A}_{i-1}$, then $A'\in \mathcal{A}_0$. Since $|A|=|A_1|=|A_2|$, implies that $|A_1\cap A_2|=2$. By the induction assumption, $A_1, A_2 \in \mathcal{A}_0$, so $\supp A_1=A_1\cap A_2=\supp A_2$ and has cardinality 6. Therefore, $|\supp A|=6$ and $A \in \mathcal{A}_0$.

\end{document}